\documentclass[reqno, 12pt]{amsart}
\usepackage{amsmath, amssymb, amsthm}
\usepackage[dvipdfmx]{hyperref, graphicx} 
\usepackage[all]{xy} 
\usepackage{arydshln} 
\usepackage{mathtools} 
\usepackage{bm} 

\newenvironment{parts}[0]{%
  \begin{list}{}%
    {\setlength{\itemindent}{0pt}
     \setlength{\labelwidth}{1.5\parindent}
     \setlength{\labelsep}{.5\parindent}
     \setlength{\leftmargin}{2\parindent}
     \setlength{\itemsep}{0pt}
     }%
   }%
  {\end{list}}
\newcommand{\Part}[1]{\item[\upshape#1]}

 
\makeatletter
\@addtoreset{equation}{section}
\makeatother
\mathtoolsset{showonlyrefs}

\def\house#1{{%
    \setbox0=\hbox{$#1$}
    \vrule height \dimexpr\ht0+1.4pt width .4pt depth \dp0\relax
    \vrule height \dimexpr\ht0+1.4pt width \dimexpr\wd0+2pt depth \dimexpr-\ht0-1pt\relax
    \llap{$#1$\kern1pt}
    \vrule height \dimexpr\ht0+1.4pt width .4pt depth \dp0\relax
}}

\allowdisplaybreaks[3] 

\DeclareMathOperator{\Nor}{Nor} 
\DeclareMathOperator{\Conj}{Conj} 
\DeclareMathOperator{\Ker}{Ker} 
\DeclareMathOperator{\GL}{GL} 
\DeclareMathOperator{\PGL}{PGL} 
\DeclareMathOperator{\rank}{rank} 
\DeclareMathOperator{\M}{M}
\DeclareMathOperator{\Gal}{Gal}

\newcommand{\Z}{\mathbb{Z}} 
\newcommand{\Q}{\mathbb{Q}} 
\newcommand{\QB}{\overline{\mathbb{Q}}} 
\newcommand{\R}{\mathbb{R}}

\newcommand{\Qtr}{\mathbb{Q}^{\rm tr}}

\newcommand{\h}{\mathfrak{h}} 
\newcommand{\hop}{h_{\rm op}} 
\newcommand{\hs}{h_{\rm s}}

\newcommand{\ie}{\textit{i.e.}}
\newcommand{\eg}{\textit{e.g.}} 
\newcommand{\f}{\frac} 
\newcommand{\g}{\gamma}

\renewcommand{\t}{\text} 
\renewcommand{\l}{\left} 
\renewcommand{\r}{\right}

\renewcommand{\Pr}{\mathbb{P}} 
\renewcommand{\c}{_{\rm conj}}

\newtheorem{Theorem}{Theorem}[section] 
\newtheorem{Proposition}[Theorem]{Proposition}
\newtheorem{Lemma}[Theorem]{Lemma}
\newtheorem{Corollary}[Theorem]{Corollary}
\newtheorem{Question}[Theorem]{Question}

\theoremstyle{definition}

\newtheorem{Remark}[Theorem]{Remark}
\newtheorem{Example}[Theorem]{Example}
\newtheorem*{acknowledgement}{Acknowledgements}
\theoremstyle{plain}

\title[Northcott numbers for generalized weighted heights]{Northcott numbers for generalized weighted Weil heights} 
\author[M. Okazaki, K. Sano]
{Masao Okazaki, Kaoru Sano}
\address[M. Okazaki]{National Institute of Technology (KOSEN), Oyama College, Tochigi, 323-0806, Japan.}
\email{m-okazaki@oyama-ct.ac.jp}
\address[K. Sano]{NTT Institute for Fundamental Mathematics, NTT Communication Science Laboratories, NTT Corporation, 2-4, Hikaridai, Seika-cho, Soraku-gun, Kyoto 619-0237, Japan}
\email{kaoru.sano@ntt.com}

\subjclass[2020]{Primary 11G50, Secondary 11C20, 15B33}
\keywords{weighted height, Northcott number, finite extensions, operator height, spectral height}

\begin{document}
\begin{abstract}
    We give a generalization of weighted Weil heights.
    These heights generalize both Weil's heights and Dobrowolski's height.
    We study Northcott numbers for our heights.
    Our results generalize the authors' former work on Vidaux and Videla's question about the Northcott number.
    As an application, we evaluate Northcott numbers for Talamanca's spectral height on matrices.
\end{abstract} 
\maketitle

\section{Introduction}\label{Intro} 

Let $S$ be a set and $\h\colon S\longrightarrow\R_{\geq 0}$ be a function. 
We set 
\[ 
    B(S,\h, C)=\l\{ s\in S \mid \h(s)<C \r\} 
\]
for each $C\in\R_{>0}$. 
We also set 
\[ 
    \Nor(S,\h)=\inf\l\{ C\in\R_{>0} \mid \#B(S,\h,C)=\infty \r\}. 
\]
The non-negative real number $\Nor(S,\h)$ is called the {\it $\h$-Northcott number of $S$},
introduced in \cite{PTW} and \cite{VV}. 
Here note that we regard the value $\inf\emptyset$ to be $\infty$. 
It is said that $S$ has the {\it $\h$-Northcott property} or $\h$-(N) for short
(resp. {\it $\h$-Bogomolov property} or $\h$-(B) for short)
if the equality $\Nor(S,\h)=\infty$ (resp. the inequality $\Nor(S\setminus Z(S,\h),\h)>0$) holds, where 
\[ 
    Z(S,\h)=\{ a\in S \mid \h(a)=0 \} 
\] 
(see \cite{BZ} and \cite{PTW}). 
Note that $\h$-(B) follows from $\h$-(N). 

In this paper, we fix an algebraic closure $\QB$ of $\Q$ and consider any algebraic extension of $\Q$ to be a subset of $\QB$. 
Let $h\colon\QB\longrightarrow\R_{\geq0}$ be the absolute (logarithmic) Weil height. 
We will recall the definition of $h$ in Section \ref{Weil}. 
In number theory, it is a basic problem to calculate the $h$-Northcott numbers. 
Vidaux and Videla gave the following. 

\begin{Question}[{\cite[Question 6]{VV}}]\label{VidauxVidela} 
    Which real numbers can be realized as $\Nor(L,h)$ for some field $L\subset\QB$? 
\end{Question} 

On the basis of the former works \cite{Wid} and \cite{PTW}, the authors generalized Question \ref{VidauxVidela} and answered to it in \cite{OS}. 

\begin{Theorem}[{\cite[Theorem 1.4]{OS}}]\label{SugoiTeiri} 
    Let $c$ be a positive real number and $\g$ a real number with $\g<1$. 
    Then we can construct a field $L\subset\QB$ satisfying the equality 
    \[
        \Nor(L,h_\g)=c, 
    \]
    where $h_\g(a)=\deg(a)^\g h(a)$ {\rm (}$a\in\QB$, $\deg(a)=[\Q(a):\Q]${\rm )} is the $\g$-weighted Weil height, introduced in \cite{PTW}. 
\end{Theorem}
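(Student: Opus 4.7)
The plan is to realize $L$ as an ascending union $L=\bigcup_n K_n$ of number fields whose successive generators $\alpha_n\in K_n\setminus K_{n-1}$ are calibrated so that their $h_\gamma$-values accumulate at $c$ from below. A convenient family is $\alpha_n=a_n^{1/d_n}$ with $a_n\in\Z_{\ge 2}$, $d_n\to\infty$, and $x^{d_n}-a_n$ irreducible over $\Q$; then $\deg(\alpha_n)=d_n$, $h(\alpha_n)=(\log a_n)/d_n$, and
\[
    h_\gamma(\alpha_n)=\frac{\log a_n}{d_n^{1-\gamma}}.
\]
Since $1-\gamma>0$, the pair $(a_n,d_n)$ gives enough freedom to prescribe any sequence $c_n\nearrow c$ as the values $h_\gamma(\alpha_n)$; additionally I would choose the $a_n$ with pairwise disjoint prime supports to guarantee arithmetic independence, and set $L=\Q(\alpha_1,\alpha_2,\ldots)$.

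The upper bound $\Nor(L,h_\gamma)\le c$ is then immediate: for each $C>c$ one has $h_\gamma(\alpha_n)<C$ for all sufficiently large $n$, so $B(L,h_\gamma,C)$ is infinite. The substantive direction is the lower bound $\Nor(L,h_\gamma)\ge c$, namely that for every $\varepsilon>0$ the set $B(L,h_\gamma,c-\varepsilon)$ is finite. Any $\beta\in L$ lies in some $K_n=\Q(\alpha_1,\ldots,\alpha_n)$, and the classical Northcott theorem handles each fixed $K_n$, so what remains is to rule out accumulation as $n\to\infty$. For this step I would exploit the Kummer structure: with disjoint prime supports, $K_n$ is a genuine compositum of the $\Q(\alpha_i)$, and the ramification at any prime dividing $a_i$ forces a quantitative lower bound on the $i$-th local contribution to $h(\beta)$. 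Balancing these local bounds against the weight $\deg(\beta)^\gamma$ and the calibration $h_\gamma(\alpha_i)\to c$ should yield $h_\gamma(\beta)\ge c-o(1)$ outside a finite exceptional set.

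The main obstacle is precisely this lower bound: $L$ contains far more elements than the chosen generators $\alpha_n$, and $h_\gamma$ interacts non-trivially with sums, products, and conjugation because the degree weight $\deg(\beta)^\gamma$ is not multiplicative under algebraic operations. The technical heart of the argument is therefore a quantitative local-global estimate that converts the explicit radical structure of each $K_n$ into a $\gamma$-sensitive lower bound on $h_\gamma(\beta)$ valid for all but finitely many $\beta\in L$, thereby propagating the generator-level calibration to every element of the field. Combined with the upper bound above, this yields $\Nor(L,h_\gamma)=c$ as desired.
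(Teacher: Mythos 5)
Your construction and upper bound follow the same lines as the paper's (the theorem is proved there as the case $N=1$, $w(d)=d^{\gamma}$ of Theorem \ref{explicit}: a tower $K_i=K_{i-1}(\alpha_i)$ of prime-degree radical extensions with generator heights calibrated to tend to $c$, and $\Nor(L,h_\gamma)=\liminf_i\inf h_\gamma(K_i\setminus K_{i-1})$ via \cite[Lemma 6]{PTW}). But the lower bound, which you yourself identify as the substantive direction, is only described, not proved: the ``quantitative local-global estimate'' is exactly the missing content. The tool that actually does this job is Silverman's height--discriminant inequality (Lemma \ref{Siliq}): for $\beta\in K_i\setminus K_{i-1}$ one has $K_{i-1}(\beta)=K_i$ (since $d_i$ is prime), total ramification of the primes dividing the radicand forces a large relative discriminant, and Silverman converts this into a lower bound for $h(\beta)$. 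Carrying this out for your choice $\alpha_n=a_n^{1/d_n}$ with a single integer radicand $a_n$ exposes a genuine quantitative obstruction: Silverman's bound has the shape $h(\beta)\geq\frac{1}{2(d_n-1)}\bigl(\frac{\log N_{K_{n-1}/\Q}(D_{K_n/K_{n-1}})}{d_n[K_{n-1}:\Q]}-\log d_n\bigr)$, and total ramification of the primes dividing $a_n$ yields roughly $\log N(D)\gtrsim [K_{n-1}:\Q](d_n-1)\log a_n$, so the resulting lower bound is about $\frac{\log a_n}{2\,d_n}$ --- half of $h(\alpha_n)=\frac{\log a_n}{d_n}$. With your calibration this only gives $\Nor(L,h_\gamma)\geq c/2$ against the upper bound $c$, so the construction as proposed does not pin down the exact value.

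The paper's remedy is precisely the point you skip over: the generators are $\alpha_i=(p_i/q_i)^{1/d_i}$ with \emph{two} primes $p_i<q_i<2p_i$ of essentially the same size (inequalities \eqref{n2}, \eqref{n3}), chosen fresh at each stage so that neither ramifies in $K_{i-1}$. Both $p_i$ and $q_i$ then ramify totally in $K_{i-1}(\zeta_{d_i}^k\alpha_i)$, which doubles the discriminant contribution to $\log(p_iq_i)\approx 2\log p_i$ and exactly absorbs the factor $\tfrac12$ in Silverman's bound, while the generator's height is still $\log q_i/d_i\approx\log p_i/d_i$ because $h(p_i/q_i)=\log\max\{p_i,q_i\}$. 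This is what makes the lower and upper bounds meet at $c$. So either you must supply a sharper height--discriminant estimate than the standard one (which you do not), or you must modify the radicands along these lines; as written, the argument has a real gap and the single-radicand choice cannot be expected to give equality.
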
 

\begin{Remark} 
    The case $\gamma=0$ of Theorem \ref{SugoiTeiri} answers Question \ref{VidauxVidela}. 
    In \cite{Hul}, Hultberg applied this result to the finiteness of cycles of small height on projective varieties. 
\end{Remark} 

In this paper, we introduce $w$-weighted heights $h^w$ that further generalize the $\g$-weighted Weil heights. 
The purpose of this paper is to generalize Theorem \ref{SugoiTeiri} to our heights $h^w$. 
We say that a function $w\colon\Z_{>0}\longrightarrow\R_{>0}$ ia a \textit{weight} if it is eventually non-increasing or non-decreasing. 
For a weight $w$ and $a\in\QB$, we set 
\[ 
    h^w(a)=w(\deg(a))h(a). 
\]
We call $h^w$ \textit{the $w$-weighted Weil height}. 
We remark that the $\g$-weighted Weil height $h_\g$ coincides with $h^{w_\g}$, where $w_\g(d)=d^\g$. 
Another remarkable example is Dobrowolski's height. 
For the weight 
\[ 
    w_{\t{D}}(d)=\l(\f{\log^+(d)}{\log^+(\log(d))}\r)^3d, 
\]
Dobrowolski in \cite{Dob} proved that $\QB$ has $h^{w_{\t{D}}}$-(B), where $\log^+(\cdot)=\max\{1,\log(\cdot)\}$. 
A merit to introduce our generalized heights is the fact that for any infinite algebraic extension of $\Q$,
there exists a weight $w$ such that $\Nor(L\setminus\mu,h^w)$ is a positive real number,
where $\mu$ is the set of roots of unity in $\QB$ (see Corollary \ref{Infinite Boundary}). 

In addition, we address not only fields but also sets of algebraic numbers of bounded degree: 
For a field $L\subset\QB$ and $N\in\Z_{>0}$, we set 
\[ 
    L^{(N)}=\l\{ a\in\QB \mid [L(a):L]\leq N \r\}. 
\] 
Our motivations to focus on the sets $L^{(N)}$ are a number of topics regarding the Northcott property and the Bogomolov property
(\eg, \cite{ASZ}, \cite{AZ}, \cite{Dob}, \cite[Theorem 2.1]{DZ}, \cite[Conjecture 3.24]{Sil2}). 

Our main result is the following. 

\begin{Theorem}\label{implicit} 
    Let $c$ be a positive real number, $N$ a positive integer, and $w$ a weight with the following two conditions. 
    \begin{parts} 
    \Part{(1)}\hypertarget{condition(1)'} 
        $\lim_{d\rightarrow\infty}w(d)\log(d)/d=0$. 
    \Part{(2)}\hypertarget{condition(2)'} 
        $w(d)/d$ is eventually non-increasing. 
    \end{parts} 
    Then we can construct a field $L\subset\QB$ satisfying the equality 
    \[ 
        \Nor(L^{(N)},h^w)=c. 
    \] 
\end{Theorem}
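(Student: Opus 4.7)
The overall strategy mirrors Theorem \ref{SugoiTeiri}: we build $L$ as an increasing union $L = \bigcup_{n \geq 0} K_n$ of number fields, with $K_n = \Q(\alpha_1,\ldots,\alpha_n)$, and we choose the generators $\alpha_n$ so that $h^w(\alpha_n)$ approaches $c$ from below, while simultaneously ensuring that no element of $L^{(N)}$ of bounded \emph{level} has small $h^w$-height.

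Step 1 (construction of the tower). I would take $\alpha_n = p_n^{1/d_n}$ (real positive root) for primes $p_n$ and positive integers $d_n$ chosen inductively, with the $d_n$ pairwise coprime and the $p_n$ distinct. Kummer theory then gives $[K_n:K_{n-1}] = d_n$ with minimal polynomial $x^{d_n} - p_n$, so that $h(\alpha_n) = (\log p_n)/d_n$ and $h^w(\alpha_n) = w(d_n)(\log p_n)/d_n$. Condition (1), rewritten as $d/(w(d)\log d) \to \infty$, ensures via Bertrand's postulate that for any prescribed sequence $\delta_n \to 0^+$ we can pick $p_n$ with $h^w(\alpha_n) \in (c - \delta_n, c)$. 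The infinite sequence $(\alpha_n) \subset L \subset L^{(N)}$ then yields $\Nor(L^{(N)}, h^w) \leq c$.

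Step 2 (lower bound $\Nor(L^{(N)}, h^w) \geq c$). Fix $\epsilon > 0$; every $a \in L^{(N)}$ lies in $K_n^{(N)}$ for some minimal $n = n(a)$. Since $K_n^{(N)}$ consists of algebraic numbers of degree at most $N[K_n:\Q]$, the classical Northcott theorem implies that $\{a \in K_n^{(N)} \mid h^w(a) < c-\epsilon\}$ is finite for each fixed $n$; it therefore suffices to show that $n(a)$ is bounded in terms of $\epsilon$. Concretely, I would aim to prove: for all sufficiently large $n$ (depending on $\epsilon$) and every $a \in K_n^{(N)} \setminus K_{n-1}^{(N)}$, we have $h^w(a) \geq c - \epsilon$. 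For $a \in K_n^{(N)} \setminus K_{n-1}^{(N)}$, set $F = K_{n-1}(a)$, $E = K_n(a) = FK_n$, and $d = \deg(a)$. The condition $a \notin K_{n-1}^{(N)}$ means $[F:K_{n-1}] > N \geq [E:K_n]$, equivalently $[E:F] < d_n$. Since $p_n$ is totally ramified in $K_n/K_{n-1}$, this in turn forces a prime of $F$ over $p_n$ to be already non-trivially ramified in $F/K_{n-1}$; a norm/product-formula argument then gives a lower bound of the form $h(a) \geq c_N (\log p_n)/d$ with $c_N$ depending only on $N$. Using condition (2), the eventual monotonicity of $w(d)/d$, this converts to $h^w(a) \geq c_N \, w(d_n)(\log p_n)/d_n$, which is close to $c_N \cdot c$ for $n$ large.

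Main obstacle. The crux is Step 3 with the sharp constant. In the $N=1$ case of \cite{OS}, a fresh element $a \in K_n \setminus K_{n-1}$ directly witnesses the full ramification of $p_n$, yielding the sharp bound with $c_N = 1$. For $N > 1$, an element of degree $\leq N$ over $K_n$ may feel only a fraction $1/N$ of this ramification, and I expect the technical heart of the proof to be either (a) choosing $d_n$ so rapidly growing that the factor-of-$N$ loss becomes asymptotically negligible—pushing condition (1) to its limit—or (b) refining the analysis by using all $N$ conjugates of $a$ over $K_n$ simultaneously to recover the sharp constant. Either way, conditions (1) and (2) on $w$ enter essentially at this step, in passing from the Liouville-type lower bound on $h(a)$ to a bound on $h^w(a)$ that matches the target $c$ in the limit.
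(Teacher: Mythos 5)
Your overall architecture (a tower $K_n=K_{n-1}(\alpha_n)$ of prime-degree radical extensions, Northcott's theorem on each layer plus a liminf over ``new'' elements, and a discriminant/ramification lower bound for $h(a)$) is the same as the paper's, but the proposal has genuine gaps exactly at the points you flag and at one point you do not. The ``main obstacle'' you leave open is the actual heart of the proof, and neither of your suggested fixes works: the loss for $a\in K_n^{(N)}\setminus K_{n-1}^{(N)}$ is a \emph{multiplicative} constant in the lower bound for $h(a)$, so taking $d_n$ to grow very fast cannot make it disappear, and ``using all $N$ conjugates'' is not an argument. The paper resolves this by a Galois-theoretic statement (Proposition \ref{Sanosan}, proved via the semidirect product $\Z/d\Z\rtimes(\Z/d\Z)^\times$): if $d_n>N$ and $a\in K_n^{(N)}\setminus K_{n-1}^{(N)}$, then $K_{n-1}(a)$ contains a full conjugate $K_{n-1}(\zeta_{d_n}^k\alpha_n)$. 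Hence the ramification exponent in $D_{K_{n-1}(a)/K_{n-1}}$ scales with $s=[K_{n-1}(a):K_{n-1}(\zeta_{d_n}^k\alpha_n)]\le N$ at the same rate as $[K_{n-1}(a):K_{n-1}]=sd_n$, so Silverman's bound (Lemma \ref{Siliq}) gives $h(a)\gtrsim \log p_n/(sd_n)$ with no constant lost in $s$; condition (2) is then used precisely to compare $w(sD_n)$ with $w(ND_n)$, not merely to pass from $h$ to $h^w$. Your weaker claim $h(a)\ge c_N\log p_n/d$ would only sandwich the Northcott number between $c_N c$ and $c$.

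Two further points would make your construction fail even granting a sharp lower bound. First, the upper-bound calibration is wrong for $N>1$: if you tune $h^w(\alpha_n)\to c$, then $\alpha_n^{1/N}\in L^{(N)}$ has $h^w(\alpha_n^{1/N})=w(N\deg\alpha_n)h(\alpha_n)/N$, which for instance for constant $w$ is about $c/N<c$, so $\Nor(L^{(N)},h^w)<c$. The paper instead calibrates against the cheapest elements of $L^{(N)}$, choosing $\log p_i\approx Nd_ic/w(ND_i)$ as in \eqref{n2}, and when $\lim_{d\to\infty}w(d)=0$ it must use $\bigl(\prod_{j\le i}\alpha_j\bigr)^{1/N}$ (degree $ND_i$) together with the extra hypothesis \eqref{n4}; relatedly, you never control how $w$ varies over the whole range $sd_i\le\deg(a)\le s\prod_{j\le i}d_j$, which is where the case split on $\lim w$ and the monotonicity conditions ($d_0$-1), ($d_0$-2) do real work. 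Second, taking $\alpha_n=p_n^{1/d_n}$ with a single ramified prime loses a factor $2$ against the $\tfrac{1}{2(m-1)}$ in Silverman's bound; the paper (following the earlier weighted-height constructions) uses $\alpha_i=(p_i/q_i)^{1/d_i}$ with $p_i<q_i<2p_i$ so that both primes ramify totally and the factor $2$ is recovered, pinning the Northcott number exactly at $c$ rather than in an interval.
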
 

\begin{Remark} 
    Both conditions (\hyperlink{condition(1)'}{1}) and (\hyperlink{condition(2)'}{2}) in Theorem \ref{implicit}
    demand that a weight $w$ is smaller enough than the weight $w_1(d)=d$. 
    Here, we refer to Lehmer's conjecture, which asserts that $\QB$ has $h^{w_1}$-(B). 
\end{Remark} 

\begin{Remark} 
    If $N=1$ or the limit $\lim_{d\rightarrow\infty}w(d)$ converges to a positive real number,
    we can remove the condition (\hyperlink{condition(2)'}{2}) as shown in the proof of Theorem $\ref{explicit}$.  
\end{Remark}

As an application of Theorem \ref{implicit}, we study Northcott numbers of matrices. 
Let $\hs$ be the spectral height on $\M_n(\QB)$, introduced in \cite{Tal}. 
The height $\hs$ is obtained from Tate's limiting process
starting with the operator height $\hop$ (see Section \ref{s} for the definitions of $\hs$ and $\hop$). 
Hence, $\hs$ is an analogue of the canonical heights on abelian varieties. 
Now, we should refer to the following Talamanca's work. 

\begin{Theorem}[{\cite[Theorem 4.2]{Tal}}]\label{Talamanca} 
    For each $A\in{\rm M}_n(\QB)$,  we denote by $(\lambda_1^{(A)},\ldots,\lambda_n^{(A)})$ the $n$-tuple formed by the all eigenvalues of $A$. 
    Then, we have the equality 
    \[ 
        \hs(A)=h(\lambda_1^{(A)},\ldots,\lambda_n^{(A)}), 
    \] 
    where $h$ is the Weil height on $\Pr^{n-1}(\QB)\cup\{\bm{0}\}$ {\rm(}for the definition, see Section \ref{Weil}{\rm)}. 
\end{Theorem}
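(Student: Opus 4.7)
The plan is to pass Tate's limiting process through the place-by-place decomposition of the operator height and apply Gelfand's spectral radius formula at each place. First I would fix a number field $K$ containing all entries of $A$ together with all the eigenvalues $\lambda_i^{(A)}$. Writing the operator height as
\[
    \hop(B) = \frac{1}{[K:\Q]}\sum_v [K_v:\Q_v]\log^+\|B\|_v
\]
with $\|\cdot\|_v$ the local operator norm attached to $v$, the definition of $\hs$ as the Tate limit gives
\[
    \hs(A) \;=\; \limk \frac{1}{k}\hop(A^k) \;=\; \limk\frac{1}{[K:\Q]}\sum_v [K_v:\Q_v]\,\frac{1}{k}\log^+\|A^k\|_v.
\]

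The key input will be Gelfand's spectral radius formula
\[
    \limk\|A^k\|_v^{1/k} \;=\; \rho_v(A) \;=\; \max_i\bigl|\lambda_i^{(A)}\bigr|_v,
\]
which is classical at archimedean places and which carries over verbatim to non-archimedean $v$ (the proof only uses that we are working with a finite-dimensional operator over a complete valued field, and the characteristic polynomial of $A$ factors over the algebraic closure of $K_v$ with the same roots and multiplicities as over $\QB$). Taking $\log^+$ and dividing by $k$, the local limit is $\log^+\max_i|\lambda_i^{(A)}|_v$ at every $v$.

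The hard part will be justifying the interchange of the limit with the sum over all places. I would handle this by exhibiting a finite set $S$ of places, depending only on $A$ and not on $k$, outside which $\log^+\|A^k\|_v=0$ for every $k$: take $S$ to consist of the archimedean places together with the finite places at which some entry of $A$ has absolute value greater than $1$. For $v\notin S$ the matrix $A$ lies in $\M_n(\mathcal{O}_v)$, so every power $A^k$ does as well, forcing $\|A^k\|_v\leq 1$. The sum thus reduces to a finite sum with index set independent of $k$, the limit passes through term by term, and the pointwise limits add up to
\[
    \hs(A) \;=\; \frac{1}{[K:\Q]}\sum_{v\in S}[K_v:\Q_v]\log^+\max_i\bigl|\lambda_i^{(A)}\bigr|_v,
\]
which is precisely $h(\lambda_1^{(A)},\ldots,\lambda_n^{(A)})$ in the convention of Section~\ref{Weil} (and equals $0$ when every eigenvalue vanishes, i.e.\ when $A$ is nilpotent and the eigenvalue tuple is $\bm 0$).
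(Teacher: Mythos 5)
First, note that the paper does not prove this statement at all: it is imported verbatim from Talamanca ([Tal, Theorem 4.2]), so there is no internal argument to compare with. Your plan — decompose $\hop$ into local operator norms, apply a Gelfand–Beurling spectral radius formula place by place, and reduce to finitely many places — is exactly Talamanca's strategy, so the route is the right one. However, as written it contains a genuine error. The identity $\hop(B)=\frac{1}{[K:\Q]}\sum_v[K_v:\Q_v]\log^+\|B\|_v$ is false: the correct local decomposition (the content of [Tal, Theorem 3.3], which is itself not a triviality, since one must show the supremum over algebraic vectors actually realizes the product of the local operator norms via an adelic approximation argument) has $\log$, not $\log^+$. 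The $\log^+$ propagates to your final display, which computes $\sum_v\log^+\max_i|\lambda_i^{(A)}|_v$, i.e.\ the affine height $h(1,\lambda_1^{(A)},\ldots,\lambda_n^{(A)})$, whereas the theorem asserts the projective height $h(\lambda_1^{(A)},\ldots,\lambda_n^{(A)})$; these differ in general. Concretely, take $A=\left(\begin{smallmatrix}1&1\\1&1\end{smallmatrix}\right)\in\M_2(\Q)$, with eigenvalues $(0,2)$. Since $A^k=2^{k-1}A$ and $h_2$ is invariant under scalars, $\hop(A^k)=\hop(A)$ for all $k$, so $\hs(A)=0=h(0,2)$; but your formula gives $\log 2$, because your set $S$ consists only of the archimedean place and $\log^+\max(|0|_\infty,|2|_\infty)=\log 2$. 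The negative local terms $\log\rho_v(A)<0$ (here $\log\rho_2(A)=-\log 2$) are precisely what make the limit a projective height, and $\log^+$ discards them. A simpler diagonal example, $A=\mathrm{diag}(1/2,1/3)$, already shows your first displayed identity fails for $\hop$ itself ($\hop(A)=\log 3$, while the $\log^+$ sum is $\log 6$).

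Moreover, once $\log^+$ is corrected to $\log$, your reduction to a finite set $S$ independent of $k$ also breaks down: integrality of $A$ at $v\notin S$ only gives $\|A^k\|_v\le 1$, and these norms can decay geometrically even at places where every entry is a unit — in the same example, $\|A^k\|_2=2^{-(k-1)}$ although $2\notin S$, and this place contributes $-\log 2$ to the limit. So the interchange of the Tate limit with the sum over places needs a genuinely two-sided argument: one must track the finitely many places where $\rho_v(A)\neq 1$ (read off from the characteristic polynomial) and bound $\frac1k\log\|A^k\|_v$ from below in terms of $\rho_v(A)$ uniformly, e.g.\ by triangularizing over a finite extension of $K_v$ or by using the Newton polygon of the characteristic polynomial, as Talamanca does. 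Your appeal to the non-archimedean Gelfand formula at each fixed place is fine in substance; it is the global bookkeeping — the exact local decomposition of $\hop$ and the uniform control of the tail — that must be repaired before the sketch becomes a proof of the stated (projective) identity.
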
 

Combining the product formula in Section \ref{Weil}, to study $\hs$-Northcott numbers,
we should consider $\hs$ to be a function on the set $\Conj(\M_n(\QB))$. 
Here, we set 
\[ 
    \Conj(\M_n(L))=\M_n(L)/\mathord{\sim}\c 
\] 
for each field $L\subset\QB$, where $A\mathbin{\sim}\c B$ ($A, B\in\M_n(L)$) means that
there exist $P\in\GL_n(\QB)$ and $c\in\QB^\times$ such that $B=c(P^{-1}AP)$. 
We denote by $[A]\c$ the class of $A\in\M_n(L)$ in $\Conj(\M_n(L))$. 
We also set 
\[ 
    \Conj(\GL_n(L))=\{ [A]\c \in\Conj(\M_n(L)) \mid A\in\GL_n(L) \}. 
\] 
We address $\Conj(\M_n(L))$ motivated by thinking about
the $\PGL_n(\QB)$-conjugacy class of rational self-maps on the projective space $\Pr^{n-1}(\QB)$.  
It is natural to ask about the existence of inequality relations among $\Nor(L,h)$, $\Nor(\Conj(\M_n(L)),\hs)$, and $\Nor(\Conj(\GL_n(L)),\hs)$. 
More precisely, we want to find inequalities
\begin{align} 
    &\hphantom{=} c_1\Nor(L,h) 
    \leq \Nor(\Conj(\M_n(L),\hs) \\
    &\leq \Nor(\Conj(\GL_n(L),\hs)
    \leq c_2\Nor(L,h) 
    \label{Norineq} 
\end{align}
for some $c_1, c_2\in\R_{>0}$. 
However, the first author proved the following proposition.

\begin{Proposition}[{\cite[Theorem 1]{Oka1}}]\label{Master Okazar} 
    We have the inequalities 
    \[
        \Nor(\Qtr, h)>\Nor(\Conj(\M_n(\Qtr)),\hs)=0, 
    \]
    where $\Qtr$ is the field of totally real numbers. 
\end{Proposition}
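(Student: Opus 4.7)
The proof proposal splits into two independent assertions. For the strict inequality $\Nor(\Qtr, h) > 0$, the plan is to invoke the Bogomolov property of $\Qtr$ with respect to the Weil height: Schinzel's classical theorem shows that every totally real algebraic integer $\alpha \neq 0, \pm 1$ satisfies $h(\alpha) \geq \frac{1}{2}\log\frac{1+\sqrt{5}}{2}$, and the extension to arbitrary $\alpha \in \Qtr \setminus \{0, \pm 1\}$ follows from the work of Bombieri--Zannier. Since $Z(\Qtr, h) = \{0, \pm 1\}$ is finite, this uniform positive lower bound on the height immediately yields $\Nor(\Qtr, h) > 0$.

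For the vanishing $\Nor(\Conj(\M_n(\Qtr)), \hs) = 0$ (assuming $n \geq 2$), the plan is to exhibit an infinite family of conjugacy classes of zero spectral height, leveraging Theorem \ref{Talamanca}: any matrix whose eigenvalues, viewed projectively, form a point of zero Weil height---in particular, any matrix all of whose eigenvalues are roots of unity---satisfies $\hs = 0$. For each prime $p \geq 5$, the real number $t_p = 2\cos(2\pi/p)$ lies in $\Qtr$, so the block matrix
\[
    A_p = \begin{pmatrix} 0 & -1 \\ 1 & t_p \end{pmatrix} \oplus I_{n-2} \in \M_n(\Qtr)
\]
has eigenvalue multi-set $\{\zeta_p, \zeta_p^{-1}, 1, \ldots, 1\}$, where $\zeta_p$ is a primitive $p$-th root of unity, and hence $\hs(A_p) = 0$.

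The key remaining step, which is the main obstacle, is to verify that $[A_p]\c \neq [A_q]\c$ in $\Conj(\M_n(\Qtr))$ for distinct primes $p, q \geq 5$: equivalence under $\sim\c$ requires a scalar $c \in \QB^\times$ matching the eigenvalue multi-sets $\{\zeta_p^{\pm 1}, 1^{(n-2)}\}$ and $\{c\zeta_q^{\pm 1}, c^{(n-2)}\}$. The product condition forces $c^n = 1$, while reality of the trace on both sides (combined with $t_q + (n-2) \neq 0$ for all primes $q \geq 5$ and $n \geq 2$) forces $c \in \R$; hence $c = \pm 1$. A quick inspection of orders of roots of unity rules out $c = -1$ (the candidate $-\zeta_q^{\pm 1}$ has order $2q \neq p$) and forces $p = q$. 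This yields an infinite family of $\hs$-zero classes and so establishes $\Nor(\Conj(\M_n(\Qtr)), \hs) = 0$.
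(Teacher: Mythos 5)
Your argument is correct, but note that the paper does not prove this proposition internally at all: it simply cites the first author's earlier paper \cite{Oka1}, and the subsequent Remark attributes the Bogomolov property of $\Qtr$ to Schinzel \cite{Sch}. Your proposal is therefore a self-contained reconstruction rather than a variant of an in-paper argument. Both halves check out: Schinzel's lower bound together with the finiteness of $Z(\Qtr,h)=\{0,\pm1\}$ gives $\Nor(\Qtr,h)\geq\frac{1}{2}\log\frac{1+\sqrt{5}}{2}>0$ (the attribution detail of whether one needs Bombieri--Zannier for non-integers is immaterial here, since the paper itself takes $h$-(B) for $\Qtr$ as known); and for the vanishing, the companion-block matrices $A_p$ with $t_p=2\cos(2\pi/p)=\zeta_p+\zeta_p^{-1}\in\Qtr$ do have eigenvalue multiset $\{\zeta_p,\zeta_p^{-1},1,\dots,1\}$, hence $\hs(A_p)=0$ by Theorem \ref{Talamanca}, and your determinant-plus-trace argument correctly pins down the scalar $c$ (indeed $c\in\R_{>0}$ and $c^n=1$ force $c=1$ directly, so the $c=-1$ case you discuss is not even needed) and shows the classes $[A_p]\c$ are pairwise distinct, which yields $\Nor(\Conj(\M_n(\Qtr)),\hs)=0$ for $n\geq2$. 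Your explicit flagging of the restriction $n\geq2$ is appropriate, since for $n=1$ the quotient $\Conj(\M_1(\Qtr))$ is finite and the claim would fail; the paper leaves this implicit. What your route buys is transparency: the reader sees concretely why scaling-and-conjugation classes over a field with $h$-(B) can still be spectrally degenerate, which is exactly the phenomenon the paper invokes to rule out inequalities \eqref{Norineq} for general $L$.
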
 

\begin{Remark} 
    The fact that $\Qtr$ has $h$-(B) was first proven in \cite{Sch}. 
\end{Remark}  

Proposition \ref{Master Okazar} reveals that it is impossible to seek for inequalities \eqref{Norineq} for a general field $L\subset\QB$. 
Hence we will attempt to give an example of a field $L\subset\QB$ satisfying inequalities \eqref{Norineq}. 
As we did for $h$, we address this problem not only for $\hs$ but also the {\it $w$-weighted spectral height $\hs^w$}
(for the definition, see Section \ref{s}). 
In Section \ref{s}, for a weight $w$ with several conditions, we will give a field $L\subset\QB$ and $c_1,c_2\in\R_{>0}$ satisfying the inequalities 
\begin{align} 
    0 &< c_1\Nor(L,h^w) \leq \Nor(\Conj(\M_n(L),\hs^w) \\
    &\leq \Nor(\Conj(\GL_n(L),\hs^w)
    \leq c_2\Nor(L,h^w) 
    <\infty
    \label{wNorineq} 
\end{align} 
by using results on Theorem \ref{implicit}.

\subsection*{Notation} 
\begin{parts} 
    \Part{$\bullet$} 
        For a number field $K$, we denote by $\mathcal{M}_K$ the set of all places of $K$. 

    \Part{$\bullet$} 
        For a field $L\subset\QB$, we denote by $\Pr^n(L)$ the $n$-dimensional projective space over $L$. 
        We denote by $[\bm{a}]$ (resp. $[a_0:a_1:\cdots:a_n]$)
        the class of $\bm{a}\in L^{n+1}\setminus\{\bm{0}\}$ (resp. $(a_0,a_1,\ldots,a_n)\in L^{n+1}\setminus\{\bm{0}$\}) in $\Pr^n(L)$. 

    \Part{$\bullet$} 
        For a field $L\subset\QB$ and $\bm{a}=(a_0,\ldots,a_n)\in\QB^{n+1}\setminus\{\bm{0}\}$,
        we fix a non-zero entry $a_i$ and set 
        \[
            \deg_L(\bm{a})=[L(a_j/a_i \mid 0\leq j\leq n):L], 
        \]
        while we set $\deg_L(\bm{0})=1$. 
        We may consider $\deg_L$ to be a function on $\Pr^n(\QB)$. 
        We simply denote $\deg_\Q$ by $\deg$. 
    \Part{$\bullet$} 
        We denote by $\mu$ the set of roots of unity in $\QB$. 
        We also set $\mu_0=\{0\}\cup\mu$. 
\end{parts}

\subsection*{Outline of this paper} 
In Section \ref{Weil}, we quickly summarize definitions and basic properties of the Weil heights. 
In Section \ref{Weight section}, we give two fundamental properties of the $w$-weighted Weil heights. 
In Section \ref{Part1}, we prove Theorem \ref{implicit}. 
In Section \ref{s}, we introduce the $w$-weighted spectral heights $\hs^w$ on matrices and study inequalities \eqref{wNorineq}. 
Section \ref{op} is an appendix. 
We address the $\hop$-version of inequalities \eqref{wNorineq}.

\section{A quick review on Weil heights}\label{Weil}

In this section, we summarize the definitions and basic properties of the Weil heights (see \cite{HS} for details).
We will frequently use them in the latter sections without mentioning them.

Let $K$ be a number field.  
For each $v\in\mathcal{M}_K$, we fix the absolute value $|\cdot|_v\colon K\longrightarrow\R_{\geq0}$ such that the product formula 
\[ 
    \prod_{v\in\mathcal{M}_K}|a|_v^{[K_v:\Q_v]}=1 
\] 
holds for all $a\in K$, where $K_v$ (resp. $\Q_v$) is the completion of $K$ (resp. $\Q$) by $|\cdot|_v$. 
For each $\bm a=(a_0,\ldots,a_n)\in K^{n+1}\setminus\{\bm0\}$, we set 
\[ 
    h(\bm a)=\sum_{v\in\mathcal{M}_K}\f{[K_v:\Q_v]}{[K:\Q]}\log\l(\max_{0\leq i\leq n}\{|a_i|_v\}\r) 
\] 
and $h(\bm0)=0$. 
It is well-known that the value $h(\bm a)$ is independent of the choice of a number field $K$. 
Combining the product formula, we may consider $h$ to be a function on $\Pr^n(\QB)$. 
The function $h$ is called {\it the Weil height on $\Pr^n(\QB)$}. 

By abuse of notation, for each $a\in\QB$, we set 
\[
    h(a)=h(1,a). 
\]
The function $h$ is called {\it the Weil height on $\QB$}. 
The following are basic formulae of the Weil height on $\QB$. 

\begin{Lemma}\label{without mentioning it} 
    \ 
    \begin{parts} 
        \Part{(1)} 
            For each $a\in\QB$ and $d\in\Z_{>0}$, we have 
            \[
                h(a^{1/d})=\f{1}{d}h(a). 
            \]
        \Part{(2)} 
            For each $p, q\in\Z\setminus\{0\}$ with $\gcd(p,q)=1$, we have 
            \[
                h(p/q)=\log\l(\max\{|p|,|q|\}\r). 
            \] 
        \Part{(3)}
            For each $a, b\in\QB$, we have 
            \[
                h(ab)\leq h(a)+h(b). 
            \]
    \end{parts} 
\end{Lemma}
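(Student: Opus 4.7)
The plan is to use the definition of $h$ directly in each case, choosing a number field $K\subset\QB$ large enough to contain all the algebraic numbers involved, and exploiting multiplicativity of the chosen absolute values together with the normalization from the product formula.

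For (1), I would let $b=a^{1/d}\in\QB$ and pick a number field $K$ containing $b$ (hence also $a$). Since $|a|_v=|b|_v^d$ for every $v\in\mathcal{M}_K$, the inequality $\max\{1,|a|_v\}=\max\{1,|b|_v\}^d$ holds pointwise, and summing the weighted logarithms over $v\in\mathcal{M}_K$ gives $h(a)=d\,h(b)$, which rearranges to the claim. I would also remark that the value is independent of the choice of $d$-th root, because any two roots differ by a root of unity, which has Weil height $0$.

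For (2), I would specialize to $K=\Q$ so that $\mathcal{M}_\Q=\{\infty\}\cup\{\ell\text{ prime}\}$. The archimedean contribution is $\log\max\{1,|p|/|q|\}=\log\max\{|p|,|q|\}-\log|q|$. For each prime $\ell$, the coprimality $\gcd(p,q)=1$ means that at most one of $p,q$ is divisible by $\ell$, so $\max\{1,|p/q|_\ell\}=\ell^{v_\ell(q)}$; summing $v_\ell(q)\log\ell$ over all primes recovers $\log|q|$, and cancellation with $-\log|q|$ leaves $\log\max\{|p|,|q|\}$, as required.

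For (3), I would take a number field $K$ containing both $a$ and $b$ and use the pointwise inequality $\max\{1,|ab|_v\}\leq\max\{1,|a|_v\}\cdot\max\{1,|b|_v\}$, valid at every (archimedean or non-archimedean) place. Taking logarithms, multiplying by $[K_v:\Q_v]/[K:\Q]$ and summing over $v\in\mathcal{M}_K$ yields $h(ab)\leq h(a)+h(b)$. No single step here is a serious obstacle; the only thing to be careful about is checking that the chosen absolute values and their normalizations (with the weights $[K_v:\Q_v]/[K:\Q]$) are consistent with the ones fixed in Section \ref{Weil} so that the product formula is the one appearing in the definition. Since all three statements are classical (see, e.g., \cite{HS}), the proposal is really just to spell out these verifications briefly.
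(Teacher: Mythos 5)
Your proposal is correct and matches the paper's approach: the paper simply states that the assertions follow from the definition of $h$ (citing \cite[Section 1.5]{BG}), and your argument is exactly that standard place-by-place verification, carried out with the same normalized absolute values and the already-noted independence of the choice of number field $K$.
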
 

\begin{proof} 
    The assertions follow from the definition of $h$. 
    For proofs, see, \eg, \cite[Section 1.5]{BG}. 
\end{proof} 

The following fundamental theorem for the Weil heights is known as Northcott's property.

\begin{Theorem}\label{North} 
    Let $L\subset\QB$ be a field with $h${\rm -(N)}. 
    Then, for any positive integer $N$, the set 
    \[
        \{ [\bm{a}]\in\Pr^n(\QB) \mid \deg_L(\bm{a})\leq N \}
    \]
    has $h${\rm -(N)}. 
    In particular, the set $L^{(N)}$ has $h${\rm -(N)}. 
\end{Theorem}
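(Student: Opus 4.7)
The plan is to reduce to the hypothesis that $L$ has $h$-(N) by extracting, from each class $[\bm{a}]$ in the set under consideration, a finite tuple of elements of $L$ of uniformly bounded height that determines $[\bm{a}]$ up to finitely many possibilities.

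Fix $C>0$ and consider any $[\bm{a}]\in\Pr^n(\QB)$ with $\deg_L(\bm{a})\le N$ and $h(\bm{a})<C$. By partitioning according to which coordinate of $\bm{a}$ is nonzero, I may rescale to assume $\bm{a}=(1,a_1,\ldots,a_n)$; this introduces only $n+1$ cases, all handled identically. Every $a_j$ then lies in $L(\bm{a})$, so $[L(a_j):L]\le N$, and comparing local suprema place-by-place gives $h(a_j)\le h(\bm{a})<C$ for every $j$.

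Next I attach to each $a_j$ its minimal polynomial $P_j(X)\in L[X]$, of some degree $d_j\le N$; its coefficients are, up to sign, the elementary symmetric polynomials evaluated at the $\Gal(\QB/L)$-conjugates of $a_j$. The Weil height is $\Gal(\QB/\Q)$-invariant, so each such conjugate still has height $h(a_j)<C$. By the standard behavior of the Weil height under products (Lemma \ref{without mentioning it} (3)) and sums, every coefficient of $P_j$ has Weil height bounded by some constant $C'=C'(C,N)$ depending only on $C$ and $N$.

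These coefficients lie in $L$, and since $L$ has $h$-(N), the set of elements of $L$ with Weil height less than $C'$ is finite. Hence only finitely many polynomials $P_j$ occur, leaving only finitely many choices for each $a_j$ and therefore only finitely many tuples $\bm{a}$. The particular case $L^{(N)}$ follows by taking $n=1$. The main technical step I anticipate is the uniform height bound $C'$ on the coefficients of $P_j$, a routine Mahler-measure-type estimate that should be stated cleanly to keep the constant explicit.
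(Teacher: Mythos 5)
Your proof is correct: reducing to an affine representative, bounding each coordinate's height by the projective height, passing to the minimal polynomial over $L$ whose coefficients are symmetric functions of conjugates of uniformly bounded height, and then invoking $h$-(N) for $L$ is exactly the standard argument, and the degree bound $\deg_L(\bm{a})\le N$ together with the at most $N$ roots of each polynomial gives the required finiteness. The paper itself offers no proof of this statement, deferring to \cite[Theorem 2.1]{DZ}, and your argument is essentially the one underlying that reference, so the approach is the same in substance.
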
 

\begin{proof} 
    See, \eg, \cite[Theorem 2.1]{DZ}. 
\end{proof} 

\begin{Remark} 
    Lemma \ref{without mentioning it} (2) and Theorem \ref{North} imply that any number field has $h^w$-(N) for any weight $w$. 
\end{Remark} 

We also give the following Kronecker's theorem. 

\begin{Theorem}\label{Kronecker} 
    The equality $Z(\QB,h)=\mu_0$ holds. 
\end{Theorem}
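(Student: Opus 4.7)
The plan is to prove the two inclusions $\mu_0\subseteq Z(\QB,h)$ and $Z(\QB,h)\subseteq\mu_0$ separately. For $\mu_0\subseteq Z(\QB,h)$, the equality $h(0)=0$ is the definition, so it suffices to handle a root of unity $\zeta\in\mu$. Fix a number field $K$ containing $\zeta$. At any archimedean $v\in\mathcal{M}_K$, $\zeta$ has finite order and hence $|\zeta|_v=1$. At any non-archimedean $v$, $\zeta$ is a root of some $x^n-1\in\Z[x]$ and is therefore $v$-integral, so $|\zeta|_v\leq 1$. Combined with the product formula $\prod_v|\zeta|_v^{[K_v:\Q_v]}=1$, the non-archimedean inequalities are forced to be equalities, hence $|\zeta|_v=1$ for every $v$. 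Then $\max\{1,|\zeta|_v\}=1$ for all $v$, which yields $h(\zeta)=h(1,\zeta)=0$.

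For the reverse inclusion, suppose $a\in\QB\setminus\{0\}$ satisfies $h(a)=0$, and set $d=\deg(a)$. From $|a^n|_v=|a|_v^n$ one obtains $h(a^n)=nh(a)=0$ for every $n\in\Z_{>0}$ (this is the same content as Lemma \ref{without mentioning it} (1) applied in reverse). All powers $a^n$ lie in $\Q(a)\subseteq\Q^{(d)}$. Since $\Q$ is a number field it has $h$-(N) (as noted in the Remark following Theorem \ref{North}), so Theorem \ref{North} applied with $L=\Q$ and $N=d$ tells us that $\Q^{(d)}$ has $h$-(N). Therefore the set $\{a^n\mid n\in\Z_{>0}\}\subseteq\Q^{(d)}$, being of bounded (in fact zero) height, must be finite. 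Consequently $a^m=a^n$ for some $m>n\geq 1$, whence $a^{m-n}=1$ and $a\in\mu$.

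The argument is short and the inputs are all already in the paper: Lemma \ref{without mentioning it} (1), the product formula, and Theorem \ref{North}. There is no serious obstacle; the only point requiring a little care is ensuring that Theorem \ref{North} is legitimately applicable, which relies on the remark that number fields have $h$-(N) (itself an immediate consequence of Lemma \ref{without mentioning it} (2) and Theorem \ref{North} for $L=\Q$, $N=1$, or alternatively the classical finiteness of rational numbers of bounded height).
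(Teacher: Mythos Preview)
Your proof is correct and is precisely the classical argument for Kronecker's theorem: height zero is preserved under taking powers, and Northcott finiteness (Theorem~\ref{North}) then forces a repetition among the powers. The paper itself does not give a proof but simply refers to \cite[Theorem~1.5.9]{BG}, where this same argument is carried out, so your write-up is a faithful expansion of the cited reference rather than a new route.
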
 

\begin{proof} 
    See, \eg, \cite[Theorem 1.5.9]{BG}. 
\end{proof}

\section{Weights}\label{Weight section}

In this section, we give two fundamental properties of the $w$-weighted Weil heights. 
However, we will not use these properties in the latter sections. 

\begin{Proposition}\label{Boundary} 
    Let $w$ and $u$ be weights with $\lim_{d\rightarrow\infty}u(d)/w(d)=\infty$. 
    If a set $S\subset\QB$ has $h^w${\rm -(B)}, then the set $S\setminus\mu$ has $h^u${\rm -(N)}. 
\end{Proposition}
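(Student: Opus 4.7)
The plan is to unpack the definition of $h^w$-(B) using Kronecker's theorem (Theorem \ref{Kronecker}) and then split the set to be controlled according to whether the degree is large or small. The comparison $u/w \to \infty$ handles large degrees, and classical Northcott handles small degrees.

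First I would observe that, since $w(d) > 0$ and $h \geq 0$, for any subset $S \subset \QB$ one has $h^w(a) = 0 \iff h(a) = 0$, so by Theorem \ref{Kronecker}, $Z(S, h^w) = S \cap \mu_0$. Therefore $h^w$-(B) gives a $\delta > 0$ such that
\[
    \#\{a \in S \setminus \mu_0 : h^w(a) < \delta\} < \infty.
\]
Given $C > 0$, I want to show that $T := \{a \in S \setminus \mu : h^u(a) < C\}$ is finite. Since $T$ contains at most the single element $0$ outside $T \cap (S \setminus \mu_0)$, it suffices to show $T_+ := \{a \in S \setminus \mu_0 : h^u(a) < C\}$ is finite.

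Next, using $\lim_{d \to \infty} u(d)/w(d) = \infty$, I would pick $D \in \Z_{>0}$ such that $w(d)/u(d) < \delta/C$ for every $d > D$. For any $a \in T_+$ with $\deg(a) > D$, this yields
\[
    h^w(a) = w(\deg(a)) h(a) < \frac{\delta}{C} u(\deg(a)) h(a) = \frac{\delta}{C} h^u(a) < \delta,
\]
so $\{a \in T_+ : \deg(a) > D\}$ is contained in the finite set furnished by $h^w$-(B).

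For the remaining part $\{a \in T_+ : \deg(a) \leq D\}$, set $m = \min_{1 \leq d \leq D} u(d) > 0$; then $h(a) < C/m$, so this set is contained in $\{a \in \QB : \deg(a) \leq D,\ h(a) < C/m\}$, which is finite by the classical Northcott theorem (a direct consequence of Theorem \ref{North} applied to $L = \Q$). Combining the two cases shows $T_+$ is finite, hence so is $T$, proving $\Nor(S \setminus \mu, h^u) = \infty$.

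I do not anticipate a serious obstacle; the only subtle point is remembering that $S \setminus \mu$ differs from $S \setminus \mu_0$ only by the single possible element $0$, so the $h^u$-(N) conclusion on $S \setminus \mu$ (rather than $S \setminus \mu_0$) follows automatically.
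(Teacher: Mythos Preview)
Your proof is correct and follows essentially the same approach as the paper. The paper organizes the argument slightly differently—it uses the $h^w$-(B) lower bound together with the ratio $u/w\to\infty$ to bound $\deg(a)$ for \emph{every} $a\in S\setminus\mu_0$ with $h^u(a)<C$, and then applies Northcott once—whereas you split into large and small degree and invoke (B) and Northcott on the two pieces separately; the ingredients and logic are the same.
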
 

\begin{proof} 
    Take any $C\in\R_{>0}$. 
    We claim that there are only finitely many $a\in S\setminus\mu_0$ satisfying the inequality $h^u(a)<C$. 
    Since $S$ has $h^w$-(B), there exists $D\in\R_{>0}$ independent of $a$ such that $h^w(a)\geq D$. 
    Hence, we have the inequality 
    \[
        \f{u(\deg(a))}{w(\deg(a))}=\f{h^u(a)}{h^w(a)}<\f{C}{D}. 
    \]
    Combining the assumption that $\lim_{d\rightarrow\infty}u(d)/w(d)=\infty$,
    we notice that there exists $M\in\Z_{>0}$ independent of $a$ such that 
    \begin{align}
        \deg(a)\leq M. 
        \label{w1} 
    \end{align} 
    Thus we have the inequality 
    \begin{align} 
        h(a) 
        =\f{h^u(a)}{u(\deg(a))} 
        <\f{C}{\min\l\{u(i) \mid 1\leq i\leq M\r\}}. 
        \label{w2} 
    \end{align} 
    By Theorem \ref{North}, the inequalities \eqref{w1} and \eqref{w2} imply that
    there are only finitely many choices for $a\in S\setminus\mu_0$. 
\end{proof} 

\begin{Remark} 
    Proposition \ref{Boundary} generalizes \cite[Proposition 2.3]{OS}. 
\end{Remark} 

\begin{Remark} 
    We did not use the fact that $w$ and $u$ are eventually non-decreasing or eventually non-increasing
    in the above proof of Proposition \ref{Boundary}. 
\end{Remark}

\begin{Theorem}\label{Existence of Boundary} 
    Let $S\subset\QB$ be a set with $\#(S\cap\mu)<\infty$.
    Then there exists a weight $w$ such that $\Nor(S,h^w)$ is a positive finite real number if and only if $\{ \deg(a) \mid a\in S \}$ is an infinite set. 
\end{Theorem}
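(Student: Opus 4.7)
The plan is to handle the two directions of the equivalence separately. For the forward direction, I would argue by contradiction: if $D := \{\deg(a) \mid a \in S\}$ is finite, say bounded by $N$, then $S \subset \Q^{(N)}$, and any weight $w$ satisfies $m \leq w(\deg(a)) \leq M$ for positive constants $m = \min_{1 \leq i \leq N} w(i)$ and $M = \max_{1 \leq i \leq N} w(i)$. The inequality $h^w(a) < C$ then forces $h(a) < C/m$, so Theorem \ref{North} applied to $\Q^{(N)}$ yields finitely many such $a$, forcing $\Nor(S, h^w) = \infty$ and ruling out a positive finite value.

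The reverse direction requires a construction. Suppose $D$ is infinite. Since $S \cap \mu_0$ is finite by hypothesis, I would discard it without affecting the Northcott number, so that $h(a) > 0$ for every $a \in S$ (Theorem \ref{Kronecker}). Define $f(d) = \min\{h(a) \mid a \in S, \deg(a) = d\}$ on $D' = \{\deg(a) \mid a \in S\}$, which is still infinite. The minimum is attained and positive by combining Kronecker with Northcott on $\Q^{(d)}$. Set $L := \liminf_{d \to \infty, d \in D'} f(d) \in [0, \infty]$ and split into three cases. If $L \in (0, \infty)$, the trivial weight $w \equiv 1$ already gives $\Nor(S, h) = L$ since lower degrees contribute only finitely many elements below any bound. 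If $L = 0$, I would construct a non-decreasing step-weight using record degrees $d_1^* < d_2^* < \cdots$ defined by $d_{k+1}^* = \min\{d \in D' \mid d > d_k^*, f(d) < f(d_k^*)\}$, putting $w(d) = 1/f(d_k^*)$ on $[d_k^*, d_{k+1}^*)$; between records one has $f(d) \geq f(d_k^*)$ by minimality, so $h^w(a) \geq 1$ for all $a$ of degree $\geq d_1^*$, while the minimum-realizer at each record attains $h^w = 1$, yielding $\Nor = 1$. The case $L = \infty$ is dual: set $g(d) = \min\{f(d') \mid d' \in D', d' \geq d\}$, which is attained since $f \to \infty$, and take $w = 1/g$, non-increasing; an iterated argmin produces infinitely many $d$ with $f(d) = g(d)$, again giving $\Nor = 1$.

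The main obstacle is that $f$ may oscillate arbitrarily on $D'$, whereas $w$ must be eventually monotone. The naive choice $w(d) = 1/f(d)$ would realize $h^w(a) = 1$ on every minimum-attaining $a$ but generally fails monotonicity. The three-case split circumvents this by extracting monotone sub-structures of $1/f$: running maxima in the $L = 0$ case and reverse running maxima in the $L = \infty$ case, each extended to a monotone step function on all of $\Z_{>0}$. A secondary technical point is defining $w$ positively on degrees below $d_1^*$ (or outside $D'$) while absorbing the finitely many corresponding contributions via Northcott on $\Q^{(d)}$, which the constant-on-intervals extension handles automatically.
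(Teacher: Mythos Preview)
Your proposal is correct and follows essentially the same approach as the paper: the paper also reduces to $S\cap\mu_0=\emptyset$, splits into three cases according to whether $\Nor(S,h)$ is $\infty$, positive finite, or $0$ (equivalent to your $L$-trichotomy since $\Nor(S,h)=\liminf f(d)$), and in the two extreme cases defines $w(d)=1/\min\{h(a)\mid a\in S,\ \deg(a)\geq d\}$ and $w(d)=1/\min\{h(a)\mid a\in S,\ \deg(a)\leq d\}$ respectively, which are exactly your $1/g(d)$ and your record-based step function. The only difference is cosmetic bookkeeping---the paper tracks the minimizing elements $a_d$ directly rather than the record degrees $d_k^*$---but the resulting weights and the verification that $\Nor(S,h^w)=1$ are identical.
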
 

\begin{proof} 
    We may assume that $S\cap\mu_0=\emptyset$. 

    First, assume that there exists a weight $w$ such that $\Nor(S,h^w)$ is a positive real number. 
    This implies that the set $S$ does not have $h^w$-(N). 
    Hence, there exists an infinite subset $S'\subset S$ such that $\sup\{ h^w(a) \mid a\in S' \}<\infty$. 
    If $\{ \deg(a) \mid a\in S' \}$ is a finite set, we also have $\sup\{ h(a) \mid a\in S' \}<\infty$. 
    This contradicts Theorem \ref{North}. 

    Next, assume that $\{ \deg(a) \mid a\in S \}$ is an infinite set. 
    We divide the proof into the following three cases. 

    \begin{parts} 
        \Part{(1)} 
            \underline{$\Nor(S,h)=\infty$.} 

            \vspace{1mm} 

            Since $S$ has $h$-(N), for each $d\in\Z_{>0}$, we can take an element $a_d\in S$ satisfying that $\deg(a_d)\geq d$ and
            \[
                h(a_d)=\min\{ h(a) \mid a\in S,\, \deg(a)\geq d \}. 
            \]
            Note that $\{ a\in S \mid \deg(a)\geq d\}\neq \emptyset$ for all $d\in\Z_{>0}$ and in fact
            \begin{align}
                \#\{a_d \mid d\in\Z_{>0}\}=\infty
                \label{w2.5} 
            \end{align}
            by the assumption that $\{ \deg(a) \mid a\in S \}$ is an infinite set. 
            We also note that the sequence $(h(a_d))_{d\in\Z_{>0}}$ is non-decreasing. 
            Hence,
            \[
                w(d)=\f{1}{h(a_d)} \quad (d\in\Z_{>0}) 
            \]
            is a non-increasing weight. 
            We remark that 
            \begin{align} 
                w(\deg(a_d))=\f{1}{h(a_d)}
            \label{w3} 
            \end{align}
            holds for all $d\in\Z_{>0}$. 
            This is because $h(a_{\deg(a_d)})=h(a_d)$ holds: 
            \begin{parts}
                \Part{$\bullet$} 
                    By $\deg(a_d)\geq d$, we have $h(a_{\deg(a_d)})\geq h(a_d)$;  
                    \Part{$\bullet$}
                    Since $a_d$ is an element of $\{ a\in S \mid \deg(a)\geq \deg(a_d) \}$, we have $h(a_{\deg(a_d)})\leq h(a_d)$. 
            \end{parts}
            Now, we claim that $\Nor(S,h^w)=1$. 
            \begin{parts}
                \Part{$\bullet$} 
                    By \eqref{w3}, for all $d\in\Z_{>0}$, we have
                    \[
                        h^w(a_d)
                        = w(\deg(a_d))h(a_d) 
                        = \f{h(a_d)}{h(a_d)} 
                        = 1.
                    \]
                    By \eqref{w2.5}, the above equalities imply that $\Nor(S,h^w)\leq 1$ holds. 
                \Part{$\bullet$} 
                    For each $a\in S$, we set $d_1=\deg(a)$ and then observe that  
                    \[
                        h^w(a) 
                        = \f{h(a)}{h(a_{d_1})} 
                        \geq 1 \quad \t{by the definition of } a_{d_1}. 
                    \]
                    The above inequality implies that $\Nor(S,h^w)\geq 1$ also holds. 
            \end{parts} 
            Thus, we have $\Nor(S,h^w)=1$. 
            This means that $w$ is a weight function that we want.

        \Part{(2)}
            \underline{$\Nor(S,h)\in\R_{>0}$.} 

            \vspace{1mm} 

            As this case is trivial, set $w(d)=1$.

        \Part{(3)}
            \underline{$\Nor(S,h)=0$.} 

            \vspace{1mm}

            Let $d_0=\min\{\deg(a)\mid a\in S\}$. 
            By Theorem \ref{North}, for each $d\in\Z_{\geq d_0}$, we can take an element $a_d\in S$
            satisfying that $\deg(a_d)\leq d$ and 
            \[
                h(a_d)=\min\{ h(a) \mid a\in S,\, \deg(a)\leq d \}. 
            \]
            Since $S$ does not have $h$-(B), it is implied that $\lim_{d\rightarrow\infty}h(a_d)=0$
            (this is a special case of Lemma \ref{Nnumber}). 
            Hence we have 
            \begin{equation}\label{w3.5}
                \#\left\{a_d \mid d\in\Z_{>0}\right\} = \infty. 
            \end{equation}
            
            We also note that the sequence $(h(a_d))_{d\geq d_0}$ is non-increasing. 
            Hence,
            \begin{equation}
                w(d) =
                \begin{cases} 
                    1 & (d\in\Z_{>0},\, d<d_0) \\
                    1/h(a_d) & (d\in\Z_{\geq d_0}) 
                \end{cases} 
            \end{equation}
            is an eventually non-decreasing weight. 
            We remark that 
            \begin{align} 
                w(\deg(a_d))=\f{1}{h(a_d)}
                \label{w4}
            \end{align}
            holds for all $d\in\Z_{\geq d_0}$. 
            This is because $h(a_{\deg(a_d)})=h(a_d)$ holds: 
            \begin{parts}
                \Part{$\bullet$}
                    By $\deg(a_d)\leq d$, we have $h(a_{\deg(a_d)})\geq h(a_d)$; 
                \Part{$\bullet$} 
                    Since $a_d$ is an element of $\{ a\in S \mid \deg(a)\leq \deg(a_d) \}$,
                    we have $h(a_{\deg(a_d)})\leq h(a_d)$. 
            \end{parts}
            Now, we claim that $\Nor(S,h^w)=1$. 
            \begin{parts}
                \Part{$\bullet$}
                    By \eqref{w4}, for all $d\in\Z_{\geq d_0}$, we have 
                    \[
                        h^w(a_d)
                        = w(\deg(a_d))h(a_d) 
                        =\f{h(a_d)}{h(a_d)} 
                        = 1.
                    \]
                    By \eqref{w3.5}, the above equalities imply that $\Nor(S,h^w)\leq 1$ holds. 
                \Part{$\bullet$} 
                    For each $a\in S$, we set $d_2=\deg(a)$ and then observe that 
                    \[
                        h^w(a) 
                        = \f{h(a)}{h(a_{d_2})} 
                        \geq 1 \quad \t{by the definition of } a_{d_2}. 
                    \]
                    The above inequality implies that $\Nor(S,h^w)\geq 1$ also holds. 
            \end{parts}
            Thus, we have the equality $\Nor(S,h^w)=1$. 
            This means that $w$ is a weight function that we want. 
        \end{parts}
    \end{proof}

\begin{Corollary}\label{Infinite Boundary} 
    Let $L\subset\QB$ be an infinite extension of $\Q$. 
    Then, there exists a weight $w$ such that $\Nor(L\setminus\mu,h^w)$ is a positive real number. 
\end{Corollary}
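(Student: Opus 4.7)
The plan is to reduce directly to Theorem \ref{Existence of Boundary} applied to $S = L \setminus \mu$. The hypothesis $\#(S \cap \mu) < \infty$ holds trivially since $S \cap \mu = \emptyset$, so it only remains to verify that the degree set $\{\deg(a) \mid a \in L \setminus \mu\}$ is infinite; Theorem \ref{Existence of Boundary} will then immediately furnish a weight $w$ such that $\Nor(L \setminus \mu, h^w)$ is a positive (finite) real number.

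The first step is to show that $L$ itself contains elements of arbitrarily large degree over $\Q$. By the primitive element theorem, every finitely generated subfield of $L$ has the form $\Q(\beta)$ for some $\beta \in L$. If every element of $L$ had degree at most a fixed $N$, then every such $\Q(\beta)$ would have $\Q$-dimension at most $N$, and therefore every $N+1$ elements of $L$ would be $\Q$-linearly dependent. This would force $[L:\Q] \leq N$, contradicting the hypothesis that $L$ is an infinite extension of $\Q$.

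The second step is a small perturbation argument to pass from $L$ to $L \setminus \mu$ without shrinking the degree set. Given $\alpha \in L$ of degree $\geq 2$, both $\alpha$ and $2\alpha$ lie in $L$ with $\Q(2\alpha) = \Q(\alpha)$, so they share the same degree. They cannot both belong to $\mu$, because a root of unity has absolute value $1$ in every complex embedding, whereas if $|\alpha| = 1$ then $|2\alpha| = 2$. Hence at least one of $\alpha$ and $2\alpha$ lies in $L \setminus \mu$ with the same (arbitrarily large) degree as $\alpha$, and so $\{\deg(a) \mid a \in L \setminus \mu\}$ is unbounded in $\Z_{>0}$, hence infinite.

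I do not anticipate any real obstacle: the argument is a short combinatorial reduction to Theorem \ref{Existence of Boundary}, which performs the analytical work of constructing $w$. The only delicate point is the necessity of the perturbation trick, since $L$ may contain infinitely many roots of unity of arbitrarily large degree (for instance when $L$ contains all cyclotomic extensions), so removing $\mu$ could a priori discard all elements of large degree; the map $\alpha \mapsto 2\alpha$ trivially circumvents this.
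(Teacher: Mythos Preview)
Your proof is correct and matches the paper's intended derivation: the corollary is stated immediately after Theorem \ref{Existence of Boundary} with no proof, so the paper regards it as a direct consequence, and you have supplied precisely the two verifications needed to invoke that theorem for $S=L\setminus\mu$. The primitive-element argument for unbounded degrees in $L$ and the $2\alpha$ perturbation to stay inside $L\setminus\mu$ are both sound and are exactly the kind of routine details the paper leaves implicit.
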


\section{Northcott numbers for the generalized weighted heights}\label{Part1} 

We will prove Theorem \ref{implicit} in Subsection \ref{finite extensions} of this section. 

\subsection{Preparations from group theory}\label{Group} 

All statements in this section will be used to prove several properties of the tower of field extensions in Subsection \ref{Field} through the Galois theory. 
Let $d$ be a prime number. 
By abuse of notation, for each $a\in \Z$, we denote $a+d\Z\in\Z/d\Z$ by $a$. 
Let $G$ be a semi-direct product of $\Z/d\Z$ and $(\Z/d\Z)^\times$ defined by the following binary operation: 
\[ 
    (a_1,b_1)\cdot(a_2,b_2) = (a_1+a_2b_1,b_1b_2) 
\]
for $(a_1,b_1), (a_2,b_2)\in\Z/d\Z\times(\Z/d\Z)^\times$. 
We also let $\pi\colon G\longrightarrow (\Z/d\Z)^\times$ be the canonical projection. 

\begin{Lemma}\label{Sanosan2} 
    \ 
    \begin{parts} 
        \Part{(1)}\hypertarget{Sanosan2(1)}  
            The group $G$ has only one subgroup of order $d$.

        \Part{(2)}\hypertarget{Sanosan2(2)} 
            The group $G$ has exactly $d$ subgroups of order $d-1$.
            
        \Part{(3)}\hypertarget{Sanosan2(3)} 
            Any subgroup $H\subset G$ with $d\mid|H|$ contains the subgroup of $G$ of order $d$.

        \Part{(4)}\hypertarget{Sanosan2(4)}
            Any subgroup $H\subset G$ with $d\nmid|H|$ is contained in a subgroup of $G$ of order $d-1$.
    \end{parts} 
\end{Lemma}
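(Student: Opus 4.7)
The plan is to exploit the internal semidirect product structure $G = N \rtimes Q$, where $N := \{(a,1) : a \in \Z/d\Z\} = \Ker\pi$ has order $d$ and $Q := \{(0,b) : b \in (\Z/d\Z)^\times\}$ has order $d-1$. Since $\gcd(d,d-1)=1$, Lagrange forces any subgroup $H \subseteq G$ either to satisfy $d \mid |H|$, in which case $H$ contains an element of order $d$ by Cauchy, or to satisfy $H \cap N = \{e\}$, so that $\pi|_H$ is injective.

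For part (\hyperlink{Sanosan2(1)}{1}), Sylow's theorem forces the number of Sylow $d$-subgroups to be congruent to $1 \pmod d$ and to divide $d-1$; it must therefore equal one, and this unique subgroup is $N$. Part (\hyperlink{Sanosan2(3)}{3}) follows immediately: if $d \mid |H|$, then $H$ contains an element of order $d$, which generates a cyclic subgroup of order $d$ that can only be $N$.

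The heart of the proof is part (\hyperlink{Sanosan2(4)}{4}), which I would treat by a cocycle-type argument. If $d \nmid |H|$, then $\pi|_H$ is injective, so $H = \{(x_c, c) : c \in K\}$ for $K := \pi(H) \subseteq (\Z/d\Z)^\times$ and a uniquely determined function $c \mapsto x_c$. Expanding the two products $(x_c,c)(x_{c'},c')$ and $(x_{c'},c')(x_c,c)$, which both lie in $H$ and share the second coordinate $cc'$, and comparing first coordinates forces
\[
    x_c(1-c') = x_{c'}(1-c) \quad \text{for all } c, c' \in K.
\]
If $K = \{1\}$ then $H = \{e\}$ lies in every order-$(d-1)$ subgroup. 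Otherwise, fix $c_0 \in K \setminus \{1\}$ and set $a := x_{c_0}/(1-c_0)$, which makes sense because $1-c_0 \ne 0$ in the field $\Z/d\Z$. The displayed identity then yields $x_c = a(1-c)$ for every $c \in K$, so $H \subseteq H_a := \{(a(1-c), c) : c \in (\Z/d\Z)^\times\}$, and a direct computation identifies $H_a$ with the conjugate $(a,1)\, Q\, (a,1)^{-1}$.

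Part (\hyperlink{Sanosan2(2)}{2}) is then an easy consequence: any subgroup of order $d-1$ has $\gcd(|H|,d)=1$, hence by (\hyperlink{Sanosan2(4)}{4}) lies in some $H_a$, and equals $H_a$ by order comparison. Distinct $a \in \Z/d\Z$ give distinct $H_a$, since for $c \neq 1$ the first coordinate $a(1-c)$ determines $a$ uniquely; this yields exactly $d$ such subgroups. The main obstacle is setting up the cocycle identity cleanly and handling the $K=\{1\}$ edge case, but once that identity is in hand, all four parts fall out with only Sylow, Cauchy, and bookkeeping.
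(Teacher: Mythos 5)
Your proof is correct, and for the two substantive parts, (2) and (4), it takes a genuinely different route from the paper. The paper fixes a generator $b_0$ of the cyclic group $(\Z/d\Z)^\times$, proves the power formula $(a,b_0)^s=\bigl(a\frac{b_0^s-1}{b_0-1},b_0^s\bigr)$, classifies the order-$(d-1)$ subgroups directly as the $d$ cyclic groups $H_a=\langle(a,b_0)\rangle$, and for (4) explicitly solves $g^m=g_0$ inside a suitable $H_a$; your argument instead derives the coboundary identity $x_c(1-c')=x_{c'}(1-c)$ from the fact that the two products of any pair of elements of $H$ have the same image under $\pi$ (injectivity of $\pi|_H$), concludes $x_c=a(1-c)$, and identifies the resulting overgroup with the conjugate $(a,1)\,Q\,(a,1)^{-1}$ of the complement $Q$ — the standard cohomological (Schur--Zassenhaus-flavoured) argument, which notably never uses that $(\Z/d\Z)^\times$ is cyclic, only that $1-c$ is invertible for $c\neq 1$. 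Your (1) and (3) are minor variants of the paper's (Sylow counting in place of normality of $\Ker\pi$; Cauchy in place of taking a Sylow subgroup of $H$), and your deduction of (2) from (4) plus the injectivity of $a\mapsto H_a$ is sound. What each approach buys: the paper's computation yields the explicit generators $(a,b_0)$ of the maximal prime-to-$d$ subgroups, which is convenient but computation-heavy; yours is more conceptual and transfers to any Frobenius-type semidirect product. One shared caveat: both your distinctness argument (needing some $c\neq 1$ in $(\Z/d\Z)^\times$) and the paper's use of $b_0-1\neq 0$ tacitly assume $d\geq 3$; for $d=2$ part (2) as stated degenerates, which is immaterial for the application since the primes $d_i$ there are large.
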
 

\begin{proof} 
    \
    \begin{parts} 
        \Part{(\hyperlink{Sanosan2(1)}{1})} 
            We prove that 
            \[
                G_1=\l\{ (a,1)\in G \mid a\in\Z/d\Z \r\}\subset G
            \]
            is the only subgroup of order $d$. 
            Note that any Sylow $d$-subgroup of $G$ is a conjugate of $G_1$. 
            On the other hand, the group $G_1$ is a normal subgroup of $G$ by definition. 
            Thus, $G_1$ is the only subgroup of $G$ of order $d$. 

        \Part{(\hyperlink{Sanosan2(2)}{2})} 
            Fix a generator $b_0\in(\Z/d\Z)^\times$. 
            Let $H\subset G$ be a subgroup with $|H|=d-1$. 
            Since $\Ker(\pi|_H)$ is a subgroup of both $H$ and $\Ker(\pi)$, the value $|\Ker(\pi|_H)|$ divides both $|H|=d-1$ and $|\Ker(\pi)|=d$. 
            Thus, the equality $|\Ker(\pi|_H)|=1$ holds and $\pi|_H$ is an isomorphism. 
            Hence, the group $H$ is generated by an element of $\pi^{-1}(b_0)$. 
            Therefore, $H$ is of the form
            \begin{align} 
                H_a=\langle(a,b_0)\rangle\subset G 
                \label{n11} 
            \end{align} 
            for some $a\in\Z/d\Z$. 
            Thus, it is sufficient to prove that
            the equality $|H_a|=d-1$ holds for all $a\in\Z/d\Z$ and
            that $H_{a_1}\neq H_{a_2}$ holds for distinct two elements $a_1,a_2\in\Z/d\Z$. 

            First, let $a\in\Z/d\Z$ and $s\in\Z\cap[1,d-1]$. 
            Noting that $b_0-1\neq0$, we have the equalities 
            \begin{equation} 
                \label{n22}
                (a,b_0)^s
                = \l(a\sum_{i=1}^sb_0^{i-1}, b_0^s\r) 
                = \l(a\f{b_0^s-1}{b_0-1}, b_0^s\r). 
            \end{equation} 
            Since $|(\Z/d\Z)^\times|=d-1$ holds, the equalities \eqref{n22} yield that 
            \[ 
                (a,b_0)^{d-1}=\l(a\f{b_0^{d-1}-1}{b_0-1}, b_0^{d-1}\r)=(0,1). 
            \]
            Thus, $|H_a|$ divides $d-1$.
            On the other hand, we have $(a,b_0)^s\neq(a,b_0)^t$ for distinct $s,t\in\Z_{\leq d-1}$
            since $b_0^s\neq b_0^t$ holds.
            Therefore, the equality $|H_a|=d-1$ holds.

            Next, assume that the equality $H_{a_1}=H_{a_2}$ holds.
            Then, there exists an integer $s \in [1,d-1]$ such that $(a_1,b_0)=(a_2,b_0)^s$.
            By \eqref{n22}, we have the equalities
            \[
                (a_1,b_0)=(a_2,b_0)^s=\l(a_2\f{b_0^s-1}{b_0-1}, b_0^s\r).
            \]
            Thus, we get $s=1$.
            Therefore, we conclude that $a_1=a_2$.

        \Part{(\hyperlink{Sanosan2(3)}{3})} 
            Let $H\subset G$ be a subgroup with $d\mid |H|$. 
            Then, $H$ has a Sylow $d$-subgroup $H'$. 
            Since $H'$ is also a Sylow subgroup of $G$, the equality $H'=G_1$ holds by (\hyperlink{Sanosan2(1)}{1}), where $G_1$ is as in (\hyperlink{Sanosan2(1)}{1}).

        \Part{(\hyperlink{Sanosan2(4)}{4})} 
            Let $H\subset G$ be a subgroup with $d\nmid |H|$. 
            From the equality $|G|=d(d-1)$, we know that $|H|$ divides $d-1$.
            We set $m=(d-1)/|H|\in\Z_{>0}$. 
            By the similar discussion of (\hyperlink{Sanosan2(2)}{2}), we get the injectivity of $\pi|_H$.
            Hence, $H$ is cyclic and generated by an element $g_0\in\pi^{-1}(b_0^m)$,
            where $b_0$ is as in (\hyperlink{Sanosan2(2)}{2}). 
            We write $g_0=(a_0,b_0^m)$ ($a_0\in\Z/d\Z$). 
            If $|H|=1$, then the assertion immediately follows. 
            Thus we assume that $|H|\geq2$, \ie, $m\leq d-2$. 
            We set $g=\l(a_0(b_0-1)(b_0^m-1)^{-1},b_0\r)$. 
            By \eqref{n22}, we have $g_0=g^m$. 
            Hence, $H$ is contained in the subgroup $H_{a_0(b_0-1)(b_0^m-1)^{-1}}$ of order $d-1$ defined in \eqref{n11}. 
    \end{parts} 
\end{proof}

\subsection{Preparations from field theory}\label{Field} 

The following Proposition \ref{Sanosan} plays a crucial role in the proof of Theorem \ref{implicit}. 
The purpose of this section is to prove it. 

\begin{Proposition}\label{Sanosan} 
    Let $N$ be a positive integer and $d$ be a prime number with $d>N$. 
    Assume that a primitive $d$-th root of unity $\zeta_d\in\QB$ satisfies that $[K(\zeta_d):K]=d-1$. 
    Let $\alpha$ be an algebraic number whose minimal polynomial over $K$ is $x^d-\alpha^d\in K[x]$. 
    We set $M=K(\alpha)$. 
    For each $a\in M^{(N)}$, we also set $s=[M(a):M]$. 
    Then, if $s<[K(a):K]$, there exists a non-negative integer $k\leq d-1$ such that $K(\zeta_d^k\alpha)\subset K(a)$.
\end{Proposition}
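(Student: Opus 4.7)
The plan is to prove the proposition by a Galois-theoretic argument via Lemma \ref{Sanosan2}. Assume $d\geq 3$; the edge case $d=2$ forces $N=1$, and then $K(a)\subset M=K(\alpha)$ together with $[K(a):K]=t>s=1$ immediately gives $K(a)=K(\alpha)=K(\zeta_d^0\alpha)$, so that case is done. Set $L=K(\zeta_d,\alpha)$, which under the hypotheses is a Galois extension of $K$ of degree $d(d-1)$, with $\Gal(L/K)$ exactly the group $G$ of Subsection \ref{Group} and with $G_1=\Gal(L/K(\zeta_d))$ the unique subgroup of order $d$ from Lemma \ref{Sanosan2}(\hyperlink{Sanosan2(1)}{1}). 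Since $\gcd(d-1,d)=1$ and $d-1\geq 2$, an easy degree count shows that the $d$ conjugate fields $K(\zeta_d^k\alpha)$ ($0\leq k\leq d-1$) are pairwise distinct, and by Galois correspondence they match bijectively with the $d$ order-$(d-1)$ subgroups of $G$ given by Lemma \ref{Sanosan2}(\hyperlink{Sanosan2(2)}{2}).

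Next, set $E=L\cap K(a)$ and $H_E=\Gal(L/E)$. Then $K(\zeta_d^k\alpha)\subset K(a)$ for some $k$ is equivalent, via the Galois correspondence, to $H_E$ being contained in some subgroup of $G$ of order $d-1$. By Lemma \ref{Sanosan2}(\hyperlink{Sanosan2(4)}{4}), it therefore suffices to show that $d\nmid|H_E|$.

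Suppose for contradiction that $d\mid|H_E|$. Lemma \ref{Sanosan2}(\hyperlink{Sanosan2(1)}{1}) and (\hyperlink{Sanosan2(3)}{3}) give $H_E\supset G_1$, hence $E\subset L^{G_1}=K(\zeta_d)$. Introduce the auxiliary field $E'=K(\zeta_d)\cap M(a)$; it contains $E$ because $E\subset K(\zeta_d)$ and $E\subset K(a)\subset M(a)$. Since $L/E$ and $K(\zeta_d)/E'$ are Galois, linear disjointness yields
\[
    [L(a):L]=[K(a):E]=\f{t}{[E:K]}, \qquad [L(a):M(a)]=[K(\zeta_d):E']=\f{d-1}{[E':K]},
\]
where $t=[K(a):K]$. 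Computing $[L(a):M]$ along the two towers $M\subset L\subset L(a)$ and $M\subset M(a)\subset L(a)$, using $[L:M]=d-1$ and $[M(a):M]=s$, forces the two answers to agree and produces the key identity
\[
    s\,[E:K]=t\,[E':K].
\]

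Since $E\subset E'$ implies $[E':K]\geq[E:K]\geq 1$, this identity gives $s\geq t$, contradicting $s<t$. The main subtlety is the choice of the auxiliary field $E'$ and the realization that computing the single quantity $[L(a):M]$ along two different towers — one passing through $L$, the other through $M(a)$ — yields the needed identity; once that is in place, everything reduces to routine degree arithmetic combined with Lemma \ref{Sanosan2}.
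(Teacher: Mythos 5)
Your proof is correct, and while it shares the paper's overall skeleton (identify $\Gal(K(\zeta_d,\alpha)/K)$ with the group $G$, reduce via Lemma \ref{Sanosan2}(4) to showing that $d$ does not divide $|\Gal(L/E)|$ for $E=L\cap K(a)$, and argue by contradiction from $E\subset K(\zeta_d)$), the way you reach the contradiction is genuinely different from the paper's. The paper (Lemma \ref{Sanosan1}) first proves $d\mid[K(a):K]$, then shows that $x^d-\alpha^d$ stays irreducible over $K(a)$ because the coefficients of any factor lie in $\widetilde{M}\cap K(a)\subset K(\zeta_d)$, and derives $sd=d^2t$, which contradicts the bound $s\leq N<d$. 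You instead introduce the auxiliary intersection $E'=K(\zeta_d)\cap M(a)$, apply the standard translation theorem ($[FF':F']=[F:F\cap F']$ for $F/K$ Galois) to the two Galois factors $L$ and $K(\zeta_d)$, and compare the two tower computations of $[L(a):M]$ to get $s[E:K]=t[E':K]$; with $E\subset E'$ this gives $s\geq t$, contradicting $s<[K(a):K]$ directly. A notable consequence is that your argument never uses the hypothesis $d>N$ in the main case $d\geq3$ (you only invoke it in the trivial $d=2$ edge case, which the paper's group-theoretic Lemma \ref{Sanosan2}(2) technically excludes anyway), so your route proves a slightly stronger statement, needing only $s<[K(a):K]$; what the paper's route buys is a more hands-on factorization argument and the intermediate divisibility $d\mid[K(a):K]$, neither of which is needed elsewhere. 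Your assertions that $[L:K]=d(d-1)$ and $\Gal(L/K)\cong G$ are stated without proof, but they follow by the same one-line coprimality count the paper gives before Lemma \ref{Sanosan3}, so this is not a gap.
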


Let the notation be as in Proposition \ref{Sanosan} and $\widetilde{M}$ the Galois closure of $M/K$. 
We also let $\sigma_{(a,b)}\in\Gal(\widetilde{M}/K)$ be an element defined by 
\[
    \begin{cases} 
        \sigma_{(a,b)}(\alpha)=\zeta_d^a\alpha, \\
        \sigma_{(a,b)}(\zeta_d)=\zeta_d^b. 
    \end{cases} 
\]
Now, since $d=[M:K]$ divides $[\widetilde{M}:K]=[\widetilde{M}:K(\zeta_d)][K(\zeta_d):K]=[\widetilde{M}:K(\zeta_d)](d-1)$ and $d$ is a prime,
$d$ must divide $[\widetilde{M}:K(\zeta_d)]$. 
Since $\widetilde{M}$ is generated by $\alpha$ over $K(\zeta_d)$,
the inequality $[\widetilde{M}:K(\zeta_d)]\leq d$ holds. 
Thus, we have 
\begin{align}
    d=[\widetilde{M}:K(\zeta_d)].
    \label{d} 
\end{align} 
Therefore, the map 
\[
    G\longrightarrow\Gal(\widetilde{M}/K); \, 
    (a,b)\longmapsto \sigma_{(a,b)} 
\]
is a group isomorphism, where $G$ is the group defined in Section \ref{Group}.
Thus, we get the following assertion by Lemma \ref{Sanosan2} and Galois theory. 

\begin{Lemma}\label{Sanosan3} 
    Let $\widetilde{M}/K$ be the field extension in Proposition \ref{Sanosan}. 
    \begin{parts} 
        \Part{(1)}\hypertarget{Sanosan3(1)} 
            The field $K(\zeta_d)$ is the only intermediate field of $\widetilde{M}/K$ of degree $d-1$ over $K$. 
        
        \Part{(2)}\hypertarget{Sanosan3(2)} 
            The fields $K(\alpha), K(\zeta_d\alpha), \ldots, K(\zeta_d^{d-1}\alpha)$ are all intermediate fields of $\widetilde{M}/K$ of degree $d$ over $K$. 
        
        \Part{(3)}\hypertarget{Sanosan3(3)}
            Any intermediate field $M'$ of $\widetilde{M}/K$ with $d\nmid[M':K]$ is contained in $K(\zeta_d)$. 
        
        \Part{(4)}\hypertarget{Sanosan3(4)} 
            Any intermediate field $M'$ of $\widetilde{M}/K$ with $d\mid[M':K]$ contains $K(\zeta_d^k\alpha)$ for some integer $k\in [0,d-1]$.
    \end{parts} 
\end{Lemma}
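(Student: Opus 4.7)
The plan is to read off Lemma \ref{Sanosan3} from Lemma \ref{Sanosan2} through the Galois correspondence furnished by the isomorphism $G \cong \Gal(\widetilde{M}/K)$ already established above the lemma. The dictionary is standard: an intermediate field $M'$ of $\widetilde{M}/K$ corresponds to the subgroup $H := \Gal(\widetilde{M}/M')$ of $G$, with $[M':K] = [G:H]$ and $|H| = d(d-1)/[M':K]$. Since $d$ is prime and $|G| = d(d-1)$, the condition $d \mid [M':K]$ is equivalent to $d \nmid |H|$, and $d \nmid [M':K]$ is equivalent to $d \mid |H|$.

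With this translation in hand, I would dispatch (\hyperlink{Sanosan3(1)}{1}), (\hyperlink{Sanosan3(3)}{3}), and (\hyperlink{Sanosan3(4)}{4}) as direct consequences of Lemma \ref{Sanosan2}. For (\hyperlink{Sanosan3(1)}{1}), a subfield of degree $d-1$ corresponds to a subgroup of order $d$; Lemma \ref{Sanosan2}(\hyperlink{Sanosan2(1)}{1}) provides uniqueness, and the standing hypothesis $[K(\zeta_d):K] = d-1$ exhibits $K(\zeta_d)$ as such a field. For (\hyperlink{Sanosan3(3)}{3}), the hypothesis $d \nmid [M':K]$ gives $d \mid |H|$, so Lemma \ref{Sanosan2}(\hyperlink{Sanosan2(3)}{3}) places the unique order-$d$ subgroup inside $H$; taking fixed fields and invoking (\hyperlink{Sanosan3(1)}{1}) yields $M' \subset K(\zeta_d)$. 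For (\hyperlink{Sanosan3(4)}{4}), $d \mid [M':K]$ gives $d \nmid |H|$; Lemma \ref{Sanosan2}(\hyperlink{Sanosan2(4)}{4}) embeds $H$ into a subgroup of order $d-1$, whose fixed field is of degree $d$ over $K$ and is therefore one of the $K(\zeta_d^k\alpha)$ by (\hyperlink{Sanosan3(2)}{2}), and this fixed field is contained in $M'$.

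Part (\hyperlink{Sanosan3(2)}{2}) is the only one needing slightly more than the dictionary, and is the step I expect to be the main obstacle. The count is automatic: Lemma \ref{Sanosan2}(\hyperlink{Sanosan2(2)}{2}) produces exactly $d$ subgroups of order $d-1$, hence exactly $d$ intermediate fields of degree $d$ over $K$. It remains to identify them with $K(\alpha), K(\zeta_d\alpha), \ldots, K(\zeta_d^{d-1}\alpha)$, for which I must check that each has degree $d$ and that they are pairwise distinct. The degree assertion is immediate since $\zeta_d^k\alpha$ is a root of the minimal polynomial $x^d - \alpha^d \in K[x]$ of $\alpha$. Distinctness is the genuine step: an equality $K(\zeta_d^i\alpha) = K(\zeta_d^j\alpha)$ with $0 \le i < j \le d-1$ would force the primitive $d$-th root of unity $\zeta_d^{j-i}$ into a degree-$d$ extension of $K$, whence $K(\zeta_d)$ would sit inside a degree-$d$ field over $K$, giving $(d-1) \mid d$, a contradiction. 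Once this distinctness is in place, the whole lemma reduces to bookkeeping under the isomorphism $G \cong \Gal(\widetilde{M}/K)$.
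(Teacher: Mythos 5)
Your proposal is correct and follows essentially the same route as the paper, which proves the lemma simply by invoking the isomorphism $G\cong\Gal(\widetilde{M}/K)$ together with Lemma \ref{Sanosan2} and the Galois correspondence; your explicit treatment of part (2) (counting the $d$ subgroups of order $d-1$ and checking the fields $K(\zeta_d^k\alpha)$ are pairwise distinct of degree $d$) just fills in what the paper leaves to ``Galois theory.'' The only caveat, shared with the paper since Lemma \ref{Sanosan2}(2) already presupposes it, is that the distinctness argument (via $(d-1)\mid d$ being absurd) needs $d\geq 3$, which holds in all applications.
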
 

To prove Proposition \ref{Sanosan}, we also employ the following lemma. 

\begin{Lemma}\label{Sanosan1} 
    Let the notation be as in Proposition \ref{Sanosan} and $\widetilde{M}$ the Galois closure of $M/K$. 
    Then, we have the following assertions. 
    \begin{parts} 
        \Part{(1)}\hypertarget{Sanosan1(2)} 
            $d\mid[K(a):K]$. 
        \Part{(2)}\hypertarget{Sanosan1(3)} 
            $d\mid[\widetilde{M}\cap K(a):K]$. 
    \end{parts} 
    \[
        \xymatrix@=30pt{
        & \widetilde{M} & M(a) &  \\
        K(\zeta_d) \ar@{-}[ru]^{d} & M \ar@{-}[u] \ar@{-}[ru]^(.6)s & & K(a) \ar@{-}[lu] \\ 
        & & \widetilde{M}\cap K(a) \ar@{-}[luu]|(.675)\hole \ar@{-}[ru] & \\
        & & K \ar@{-}[lluu]^{d-1} \ar@{-}[luu]^d \ar@{-}[u]_{(2)} \ar@{-}[ruu]_{\begin{matrix} \t{\tiny{\rm (1)}} \\ \t{\tiny{$>s$}} \end{matrix}} & \\
        }
    \] 
\end{Lemma}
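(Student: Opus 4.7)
The plan is to exploit the tower $K\subset K(a)\subset M(a)$ alongside $K\subset M\subset M(a)$. Set $t=[M(a):K(a)]$, so multiplicativity of degrees gives $t\cdot[K(a):K]=[M(a):K]=sd$. The bounds $s\leq N<d$ with $d$ prime yield $\gcd(s,d)=1$, so $t\mid sd$ forces $t$ to be either a divisor of $s$ or equal to $d$. The hypothesis $s<[K(a):K]=sd/t$ rules out $t=d$, leaving $t\mid s$. Then $[K(a):K]=(s/t)d$ is divisible by $d$, which is part (1).

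For (2), I would split on whether $t=1$ or $t\geq 2$. If $t=1$ then $M\subset K(a)$, hence $M\subset \widetilde{M}\cap K(a)$, and $d=[M:K]$ divides $[\widetilde{M}\cap K(a):K]$.

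The substantive case is $t\geq 2$. The minimal polynomial of $\alpha$ over $K(a)$ has degree $t$ and its roots form a $t$-subset of $\{\zeta_d^k\alpha\mid 0\leq k\leq d-1\}$, so inspecting the constant term yields $\zeta_d^c\alpha^t\in K(a)$ for some integer $c$, and thus $\alpha^t\in K(a)(\zeta_d)$. Together with $\alpha^d\in K$ and B\'ezout (valid since $\gcd(t,d)=1$ because $2\leq t<d$ and $d$ is prime), this gives $\alpha\in K(a)(\zeta_d)$, whence $\widetilde{M}=M(\zeta_d)\subset K(a)(\zeta_d)$, so $K(a)\cdot\widetilde{M}=K(a)\cdot K(\zeta_d)$. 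Writing $F=K(a)\cap\widetilde{M}$ and applying the Galois--compositum formula $[E\cdot K(a):K(a)]=[E:E\cap K(a)]$ to both $E=\widetilde{M}$ and $E=K(\zeta_d)$, and using $K(a)\cap K(\zeta_d)=F\cap K(\zeta_d)$ (since $K(\zeta_d)\subset\widetilde{M}$), one equates
\[
\frac{d(d-1)}{[F:K]}=[\widetilde{M}:F]=[K(\zeta_d):F\cap K(\zeta_d)]=\frac{d-1}{[F\cap K(\zeta_d):K]}.
\]
This yields $[F:F\cap K(\zeta_d)]=d$, so $F\not\subset K(\zeta_d)$, and Lemma \ref{Sanosan3}(\hyperlink{Sanosan3(3)}{3}) then forces $d\mid[F:K]$.

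The main obstacle I anticipate is the $t\geq 2$ branch of (2), specifically (a) extracting $\alpha\in K(a)(\zeta_d)$ from the explicit form of the minimal polynomial of $\alpha$ over $K(a)$, and (b) setting up the compositum comparison so that Lemma \ref{Sanosan3}(\hyperlink{Sanosan3(3)}{3}) can be invoked. Part (1) and the $t=1$ branch of (2) are direct multiplicativity of degrees coupled with the arithmetic observation $\gcd(s,d)=1$.
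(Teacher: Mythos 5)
Your proof is correct, but for part (2) it takes a genuinely different route from the paper. The paper argues by contradiction: if $d\nmid[\widetilde{M}\cap K(a):K]$, then Lemma \ref{Sanosan3} (3) forces $\widetilde{M}\cap K(a)\subset K(\zeta_d)$, so the factorization of $x^d-\alpha^d$ over $K(a)$ (whose coefficients lie in $\widetilde{M}\cap K(a)$) descends to $K(\zeta_d)$, where $x^d-\alpha^d$ is irreducible by $[\widetilde{M}:K(\zeta_d)]=d$; hence $[M(a):K(a)]=d$, and the degree count $sd=[M(a):K(a)][K(a):K]=d^2t$ contradicts $s\leq N<d$. You instead argue directly: the constant term of the minimal polynomial of $\alpha$ over $K(a)$ gives $\zeta_d^c\alpha^t\in K(a)$, B\'ezout (using $t\mid s\leq N<d$ from your part (1)) recovers $\alpha\in K(a)(\zeta_d)$, hence $\widetilde{M}K(a)=K(\zeta_d)K(a)$, and the Galois compositum formula yields $[\widetilde{M}\cap K(a):K]=d\,[K(a)\cap K(\zeta_d):K]$, which gives the divisibility outright (your final appeal to Lemma \ref{Sanosan3} (3) is not even needed). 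Your version buys a sharper statement --- the exact index, and the identification of the compositum --- while the paper's version is a shorter contradiction that reuses the already-established Lemma \ref{Sanosan3}; both ultimately rest on $[\widetilde{M}:K(\zeta_d)]=d$. Two small points: in part (1), the dichotomy ``$t\mid s$ or $t=d$'' tacitly uses $t=[M(a):K(a)]\leq d$ (clear since $M(a)=K(a)(\alpha)$ and $\alpha$ is a root of $x^d-\alpha^d\in K(a)[x]$), or else note that the hypothesis $s<sd/t$, i.e.\ $t<d$, excludes every multiple of $d$ at once; and your part (1) is essentially the paper's argument, just phrased through $t$ rather than through $[K(a):K]\mid sd$.
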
 

\begin{proof} 
    \
    \begin{parts} 
        \Part{(\hyperlink{Sanosan1(2)}{1})}
            Note that $[K(a):K]$ divides $[M(a):K]=[M(a):M][M:K]=sd$.
            By the assumption that $[K(a):K]>s$, the prime number $d$ must divide $[K(a):K]$. 
            
        \Part{(\hyperlink{Sanosan1(3)}{2})}
            We prove the assertion by contradiction. 
            Suppose that $d\nmid[\widetilde{M}\cap K(a):K]$ holds. 
            Under the assumption, we claim that $[K(a)(\alpha):K(a)]=d$ holds. 
            It is sufficient to show that $x^d-\alpha^d$ is irreducible over $K(a)$. 
            Consider the irreducible decomposition of $x^d-\alpha^d$ over $K(a)$. 
            Clearly all coefficients of the irreducible factors of $x^d-\alpha^d$ are in $\widetilde{M}$, so are in $\widetilde{M}\cap K(a)$. 
            By the assumption that $d\nmid[\widetilde{M}\cap K(a):K]$ and Lemma \ref{Sanosan3} (\hyperlink{Sanosan3(3)}{3}), the field $\widetilde{M}\cap K(a)$ is contained in $K(\zeta_d)$. 
            Thus, the irreducible decomposition of  $x^d-\alpha^d$ over $K(a)$ is also a decomposition of it over $K(\zeta_d)$.
            Now, note that $[K(\zeta_d)(\alpha):K(\zeta_d)]=d$ holds by \eqref{d}. 
            Hence, the irreducible decomposition of $x^d-\alpha^d$ over $K(a)$ is trivial. 
            Thus, we have $[K(a)(\alpha):K(a)]=d$. 
            \[
                \xymatrix@C=-10pt{
                & M(a)=K(a)(\alpha) & \\
                M \ar@{-}[ru]^s & & K(a) \ar@{-}[lu]_d \\
                & K \ar@{-}[lu]^d \ar@{-}[ru]_{dt} & \\
                }
                \] 
            By ({\hyperlink{Sanosan1(2)}{1}}), we have $[K(a):K]=dt$ for some $t\in\Z_{>0}$. 
            Hence we have the equalities 
            \[
                sd=[M(a):M][M:K]=[M(a):K(a)][K(a):K]=d^2t. 
            \]
            This contradicts the assumption that $s\leq N<d$.
    \end{parts}
\end{proof}

\begin{proof}[Proof of Proposition \ref{Sanosan}] 
    By Lemma \ref{Sanosan1}, the prime number $d$ divides $[\widetilde{M}\cap K(a):K]$. 
    Hence, $\widetilde{M}\cap K(a)$ contains $K(\zeta_d^k\alpha)$ for some integer $k\in [0,d-1]$ by Lemma \ref{Sanosan3} (\hyperlink{Sanosan3(4)}{4}). 
    By the inclusion $\widetilde{M}\cap K(a)\subset K(a)$, the assertion follows. 
\end{proof}

\subsection{Proof of Theorem \ref{implicit}}\label{finite extensions} 

In this subsection, we prove Theorem \ref{implicit}. 
More precisely, we prove the following explicit one. 

\begin{Theorem}\label{explicit} 
    Let $c$ be a positive real number, $N$ be a positive integer, and $w$ be a weight with the following two conditions. 
    \begin{parts} 
        \Part{(1)}\hypertarget{condition(1)} 
            $\lim_{d\rightarrow\infty}w(d)\log(d)/d=0$. 
        \Part{(2)}\hypertarget{condition(2)} 
            $w(d)/d$ is eventually non-increasing. 
    \end{parts}
    Fix $d_0\in\Z_{>0}$ such that the following two conditions hold. 
    \begin{parts} 
        \Part{($d_0$-1)}\hypertarget{d01} 
            $w(d)$ is non-increasing or non-decreasing on $\Z_{\geq d_0}$. 
        \Part{($d_0$-2)}\hypertarget{d02} 
            $w(d)/d$ is non-increasing on $\Z_{\geq d_0}$.
    \end{parts} 
    We also let $(d_i)_{i\in\Z_{>0}}$, $(p_i)_{i\in\Z_{>0}}$,
    and $(q_i)_{i\in\Z_{>0}}$ be strictly increasing sequences of prime numbers satisfying the inequalities 
    \begin{align} 
        & d_1\geq d_0,
        \label{n-1} \\ 
        & \exp\l(\f{Nd_1}{w(ND_1)}c\r)\geq 2, 
        \label{n0} \\
        & 4\exp\l(\f{Nd_i}{w(ND_i)}c\r) \leq \exp\l(\f{Nd_{i+1}}{w(ND_{i+1})}c\r), 
        \label{n1} \\
        & \exp\l(\f{Nd_i}{w(ND_i)}c\r) \leq p_i \leq 2\exp\l(\f{Nd_i}{w(ND_i)}c\r), 
        \label{n2} \\
        \t{and } \ 
        & p_i< q_i <2p_i 
        \label{n3} 
    \end{align}  
    for all $i\in\Z_{>0}$, where 
    \[
        D_i=
        \begin{cases} 
            d_i & (\t{if } \lim_{d\rightarrow\infty}w(d)>0), \\
            \prod_{j=1}^id_j & (\t{if } \lim_{d\rightarrow\infty}w(d)=0). 
        \end{cases} 
    \] 
    If $\lim_{d\rightarrow\infty}w(d)=0$, we furthermore assume that 
    \begin{align} 
        \lim_{i\rightarrow\infty}\f{iw(ND_i)}{w(ND_{i-1})}=0. 
        \label{n4} 
    \end{align} 
    We set $L=\Q((p_i/q_i)^{1/d_i} \mid i\in\Z_{>0})$. 
    Then, the equality $\Nor(L^{(N)},h^w)=c$ holds. 
\end{Theorem}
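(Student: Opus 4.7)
The plan is to prove $\Nor(L^{(N)}, h^w) \leq c$ and $\Nor(L^{(N)}, h^w) \geq c$ separately. Set $\alpha_i = (p_i/q_i)^{1/d_i}$ and $K_i = \Q(\alpha_1, \ldots, \alpha_i)$, so $L = \bigcup_i K_i$. Since the primes $p_j, q_j$ are pairwise distinct, the Capelli--Vahlen criterion gives $[K_i : \Q] = \prod_{j \leq i} d_j$, with $x^{d_i} - p_i/q_i$ the minimal polynomial of $\alpha_i$ over $K_{i-1}$. Because $K_{i-1} \subset \R$ and $d_i$ is an odd prime, one also verifies $[K_{i-1}(\zeta_{d_i}):K_{i-1}] = d_i - 1$; these are precisely the hypotheses required to invoke Proposition~\ref{Sanosan}.

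For the \textbf{upper bound}, I will exhibit an infinite sequence in $L^{(N)}$ whose $h^w$-values tend to $c$. When $\lim_d w(d) > 0$ (so $D_i = d_i$), take $\gamma_i := (p_i/q_i)^{1/(N d_i)}$: since $\gamma_i^N = \alpha_i$, we have $\gamma_i \in L^{(N)}$; Capelli gives $\deg(\gamma_i) = N d_i$ and $h(\gamma_i) = \log q_i/(N d_i)$; and \eqref{n2}--\eqref{n3} yield $\log q_i \leq \log 4 + N d_i c/w(N d_i)$, whence $h^w(\gamma_i) \leq c + (\log 4)\,w(N d_i)/(N d_i)$, which tends to $c$ by condition~(\hyperlink{condition(1)}{1}). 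When $\lim_d w(d) = 0$ (so $D_i = \prod_{j \leq i} d_j$), take instead $\delta_i := \prod_{j \leq i}(p_j/q_j)^{1/(N d_j)}$; Capelli again gives $\deg(\delta_i) = N D_i$, and a place-by-place computation exploiting the pairwise distinctness of the $p_j, q_j$ produces the exact identity $h(\delta_i) = (1/N)\sum_{j \leq i}\log q_j/d_j$. Bounding each summand via \eqref{n2}--\eqref{n3} and using \eqref{n4} to show that the partial sums are asymptotically dominated by the $j = i$ term, one concludes $\limsup_i h^w(\delta_i) \leq c$.

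For the \textbf{lower bound}, fix $C \in (0, c)$ and set $i(a) := \min\{i : a \in K_i^{(N)}\}$ for $a \in L^{(N)}$. For each fixed $i_0$, elements with $i(a) \leq i_0$ have $\deg(a) \leq N [K_{i_0}:\Q]$, so Theorem~\ref{North}, combined with a uniform positive lower bound for $w$ on this bounded degree range, shows that only finitely many of them satisfy $h^w(a) < C$. It thus suffices to prove $\liminf_{i(a) \to \infty} h^w(a) \geq c$. For such $a$ with $i := i(a)$, minimality forces $[K_{i-1}(a):K_{i-1}] > N \geq [K_i(a):K_i]$, so Proposition~\ref{Sanosan} (with $K = K_{i-1}$, $\alpha = \alpha_i$, $d = d_i$) furnishes $k$ with $\eta := \zeta_{d_i}^k \alpha_i \in K_{i-1}(a)$. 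Since $[K_{i-1}(\eta):K_{i-1}] = d_i$, the tower formula combined with $[K_{i-1}(a):\Q(a)] \leq [K_{i-1}:\Q]$ yields $\deg(a) \geq d_i$.

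The \textbf{main obstacle} is converting the inclusion $K_{i-1}(\eta) \subset K_{i-1}(a)$ into a matching lower bound on $h(a)$. The plan is to exploit that $\Q(\eta)/\Q$ is totally ramified of degree $d_i$ at both $p_i$ and $q_i$, while $K_{i-1}/\Q$ is unramified at these primes (their ramification loci being disjoint from those of the earlier $\alpha_j$'s). Consequently $K_{i-1}(a)/K_{i-1}$ carries ramification index at least $d_i$ at the places above $q_i$, and pushing this ramification down the tower $\Q(a) \subset K_{i-1}(a)$ and tracking the local expansion of $\eta$ as a polynomial in $a$ over $K_{i-1}$ should yield a quantitative bound of the form $\deg(a)\, h(a) \gtrsim \log q_i \approx N d_i c / w(N D_i)$. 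Combining with $\deg(a) \geq d_i$, the asymptotic size from \eqref{n2}--\eqref{n3}, and the fine control on $w$ furnished by conditions~(\hyperlink{condition(1)}{1})--(\hyperlink{condition(2)}{2}) (and by \eqref{n4} in the subcase $\lim w(d) = 0$), one should obtain $h^w(a) \geq c - o(1)$ as $i \to \infty$. The delicate step is to make this transfer quantitatively tight across the wide range $d_i \leq \deg(a) \leq N\prod_{j \leq i} d_j$, particularly in the subcase where $\prod d_j$ enters naturally through $D_i$.
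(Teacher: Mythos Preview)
Your upper bound matches the paper's almost verbatim (the paper also uses $\alpha_i^{1/N}$ and $(\prod_{j\leq i}\alpha_j)^{1/N}$, bounding $h$ of the product by the sum via subadditivity rather than your claimed exact identity, which is all that is needed). The setup for the lower bound---filtering by $K_i^{(N)}$ and invoking Proposition~\ref{Sanosan} to produce $\eta=\zeta_{d_i}^k\alpha_i\in K_{i-1}(a)$---is also the same.

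The genuine gap is your passage from ``$K_{i-1}(a)/K_{i-1}$ carries ramification index at least $d_i$ above $q_i$'' to a height lower bound. Your proposed mechanism, ``tracking the local expansion of $\eta$ as a polynomial in $a$'', is a heuristic, not an argument: writing $\eta=P(a)$ with $P\in K_{i-1}[x]$ gives at best $h(\eta)\leq (\deg P)\,h(a)+O(\text{height of coefficients})$, and you have no control over those coefficients. The paper closes this gap with a specific tool you do not mention: Silverman's lower bound (Lemma~\ref{Siliq}), which says
\[
h(a)\;\geq\;\frac{1}{2(m-1)}\Bigl(\frac{\log N_{K/\Q}(D_{K(a)/K})}{m[K:\Q]}-\log m\Bigr),\qquad m=[K(a):K].
\]
The ramification you identified feeds directly into the relative discriminant: using the tower formula for discriminants (Lemma~\ref{Disc}) along the chain $K_{i-1}\subset K_{i-1}(\eta)\subset K_{i-1}(a)$, together with the total ramification of $p_i,q_i$ in $\Q(\eta)$ and their being unramified in $K_{i-1}$, one obtains $(p_iq_i)^{[K_{i-1}:\Q](d_i-1)s}\mid N_{K_{i-1}/\Q}(D_{K_{i-1}(a)/K_{i-1}})$ where $s=[K_{i-1}(a):K_{i-1}(\eta)]\leq N$. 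Plugging this into Silverman's inequality with $m=sd_i$ and $\log p_i\geq N d_i c/w(ND_i)$ from \eqref{n2} gives the required explicit lower bound on $h(a)$, after which the weight comparison (using $sd_i\leq\deg(a)\leq s\prod_{j\leq i}d_j$ together with conditions (\hyperlink{d01}{$d_0$-1}), (\hyperlink{d02}{$d_0$-2})) finishes the job. Without Silverman's inequality or an equivalent discriminant--height comparison, your lower-bound argument is incomplete.
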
 

\begin{Remark} 
    We can take a sequences $(d_i)_{i\in\Z_{>0}}$ satisfying \eqref{n0} and \eqref{n3} by the condition (1),
    that implies that $\lim_{d\rightarrow\infty}d/w(d)=\infty$. 
    The inequalities \eqref{n1}, \eqref{n2}, and \eqref{n3} imply that
    $\{p_i\ |\ I \in \Z_{\geq 0}\} \cap \{q_i\ |\ \Z_{\geq 0}\} = \emptyset$.  
    We can also take sequences $(p_i)_{i\in\Z_{>0}}$ and $(q_i)_{i\in\Z_{>0}}$
    satisfying \eqref{n1} and \eqref{n2} by \eqref{n0} and Bertrand-Chebyshev's theorem. 
    If $\lim_{d\rightarrow\infty}w(d)=0$,
    then we can always take a sequence $(d_i)_{i\in\Z_{>0}}$
    satisfying \eqref{n4} by letting,
    for example, a prime $d_i$ be so large that 
    \[ 
        w(ND_i) < \f{w(ND_{i-1})}{i^2}. 
    \] 
\end{Remark}

The proof of Theorem \ref{explicit} is further based on the following three lemmata. 

\begin{Lemma}\label{Disc} 
    Let $K'/M'/K$ be extensions of number fields. 
    Then 
    \[ 
        D_{K'/K}=N_{M'/K}(D_{K'/M'})D_{M'/K}^{[K':M']}. 
    \] 
\end{Lemma}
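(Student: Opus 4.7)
The plan is to deduce the tower formula for discriminants from the multiplicative tower formula for the different ideal. Recall that for any finite extension $F'/F$ of number fields one has $D_{F'/F}=N_{F'/F}(\mathfrak{d}_{F'/F})$, where $\mathfrak{d}_{F'/F}\subset\mathcal{O}_{F'}$ is the different, and that the different enjoys the transitivity relation
\[
    \mathfrak{d}_{K'/K}=\mathfrak{d}_{K'/M'}\cdot\bigl(\mathfrak{d}_{M'/K}\mathcal{O}_{K'}\bigr).
\]
The first step is simply to cite these two facts from a standard reference (\eg, Neukirch, \emph{Algebraic Number Theory}, Chapter III, §2).

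Next, I would apply the ideal norm $N_{K'/K}$ to both sides of the transitivity relation above, using transitivity of the norm $N_{K'/K}=N_{M'/K}\circ N_{K'/M'}$. For the first factor this gives
\[
    N_{K'/K}(\mathfrak{d}_{K'/M'})=N_{M'/K}\bigl(N_{K'/M'}(\mathfrak{d}_{K'/M'})\bigr)=N_{M'/K}(D_{K'/M'}).
\]
For the second factor, one invokes the general identity $N_{K'/M'}(\mathfrak{a}\mathcal{O}_{K'})=\mathfrak{a}^{[K':M']}$ valid for any ideal $\mathfrak{a}\subset\mathcal{O}_{M'}$, which yields
\[
    N_{K'/K}\bigl(\mathfrak{d}_{M'/K}\mathcal{O}_{K'}\bigr)=N_{M'/K}(\mathfrak{d}_{M'/K})^{[K':M']}=D_{M'/K}^{[K':M']}.
\]
Multiplying the two contributions gives the claimed identity.

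There is essentially no real obstacle here since the lemma is classical; the only conceptual point is the norm-of-extended-ideal formula, which itself is a standard consequence of the local multiplicativity of ideal norms. If one wanted a more self-contained proof, one could work place-by-place: by the compatibility of the discriminant with localization and completion, it suffices to verify equality of $\mathfrak{p}$-adic valuations for each prime $\mathfrak{p}$ of $\mathcal{O}_K$, which reduces the global statement to the analogous identity for differents in towers of complete discrete valuation rings and can then be checked by a direct computation on uniformizers.
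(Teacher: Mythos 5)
Your argument is correct: deducing the discriminant tower formula by applying $N_{K'/K}$ to the transitivity relation for the different, together with $D_{F'/F}=N_{F'/F}(\mathfrak{d}_{F'/F})$ and the norm-of-extended-ideal identity, is exactly the standard proof. The paper itself gives no argument but simply cites Neukirch (III, (2.10)), where essentially this same derivation appears, so your proposal matches the intended route.
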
 

\begin{proof} 
    See, \eg, \cite[p.202, (2.10)]{Neu}. 
\end{proof}

\begin{Lemma}\label{Siliq} 
    Let $K$ be a number field. 
    Assume that $a\in \QB$ satisfies that $[K(a):K]>1$. 
    We set $M=K(a)$ and $m=[M:K]$. 
    Then 
    \[
        h(a)\geq \f{1}{2(m-1)}\l(\f{\log(N_{K/\Q}(D_{M/K}))}{m[K:\Q]}-\log(m)\r) 
    \] 
    holds, where $N_{K/\Q}$ is the usual norm and $D_{M/K}$ is the relative discriminant ideal of the extension $M/K$. 
\end{Lemma}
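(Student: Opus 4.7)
\emph{Plan.} The statement is a relative version of Silverman's classical discriminant lower bound for the Weil height. My strategy is to combine Mahler's inequality for polynomial discriminants with the product formula, and then exploit the relation between $\mathrm{disc}(f)$ and the relative discriminant ideal $D_{M/K}$.

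Let $f(x) \in K[x]$ be the monic minimal polynomial of $a$ over $K$, with $K$-conjugates $a = a_1, \ldots, a_m$; then $\mathrm{disc}(f) = \prod_{i<j}(a_i - a_j)^2$. Mahler's inequality yields, at each archimedean place $v$ of $K$,
\[
|\mathrm{disc}(f)|_v \leq m^m \prod_{i=1}^m \max\{1, |a_i|_v\}^{2(m-1)}.
\]
Taking logarithms, weighting by $[K_v:\Q_v]$, summing over archimedean $v$, and combining with the standard height identity
\[
m[K:\Q]\, h(a) = \sum_{v \in \mathcal{M}_K} [K_v:\Q_v] \sum_{i=1}^m \log\max\{1, |a_i|_v\}
\]
together with the non-negativity of each $\log\max\{1,|a_i|_v\}$, I would obtain
\[
\sum_{v \mid \infty} [K_v:\Q_v] \log|\mathrm{disc}(f)|_v \leq m[K:\Q] \log m + 2m(m-1)[K:\Q]\, h(a).
\]

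When $a$ is integral over $\mathcal{O}_K$, the product formula applied to $\mathrm{disc}(f) \in \mathcal{O}_K \setminus \{0\}$ shows that the left-hand side equals $\log N_{K/\Q}(\mathrm{disc}(f)\mathcal{O}_K)$, and the classical identity $\mathrm{disc}(f)\mathcal{O}_K = [\mathcal{O}_M : \mathcal{O}_K[a]]^2 \cdot D_{M/K}$ gives $N_{K/\Q}(\mathrm{disc}(f)\mathcal{O}_K) \geq N_{K/\Q}(D_{M/K})$; substituting and dividing by $2m(m-1)[K:\Q]$ then yields the desired inequality. For general $a \in \QB$, I would run the same computation with $f$ replaced by a primitive integral polynomial $F \in \mathcal{O}_K[x]$ with $F(a) = 0$, using Mahler's inequality in the form $|\mathrm{disc}(F)|_v \leq m^m M_v(F)^{2(m-1)}$ (with $M_v(F)$ the local Mahler measure), the formula $\sum_{v \mid \infty}[K_v:\Q_v]\log M_v(F) = m[K:\Q]\, h(a)$ for the Weil height, and the divisibility $D_{M/K} \mid \mathrm{disc}(F)\mathcal{O}_K$.

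The main obstacle is justifying this last divisibility in the non-integral case. I would argue locally at each prime $\mathfrak{p}$ of $K$ by factoring $F$ over $K_\mathfrak{p}$ and reducing each local factor, via its Newton polygon or by passing to the reciprocal polynomial $F^{*}(x) = x^m F(1/x)$ — which preserves both $h(a)$ and $D_{M/K}$ since $h(1/a) = h(a)$ and $K(1/a) = K(a)$ — to the situation where the leading coefficient is a $\mathfrak{p}$-adic unit, at which point the classical monic comparison of $\mathrm{disc}(F)$ with the local different applies.
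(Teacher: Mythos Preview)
The paper does not prove this lemma at all: its entire proof is the single line ``See \cite[Theorem 2]{Sil1}'', citing Silverman's 1984 paper. Your sketch is essentially a reconstruction of Silverman's original argument --- Mahler's discriminant bound at the archimedean places, the product formula, and the comparison $D_{M/K}\mid \mathrm{disc}(F)\mathcal{O}_K$ --- and the chain of inequalities you wrote does rearrange to exactly the stated bound. So relative to the paper there is nothing to compare; you are supplying what the authors deliberately outsourced.

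One comment on the non-integral reduction, which you rightly flag as the delicate point. The reciprocal-polynomial trick $F^{*}(x)=x^{m}F(1/x)$ does preserve $h$ and $M$, but it only makes the \emph{constant} coefficient a unit at a given $\mathfrak p$, not the leading one, so by itself it does not reduce you to the monic/integral case; you still need the honest local argument. The clean way is to work prime by prime: over $K_{\mathfrak p}$ factor $F$ into irreducibles $F=\prod_j F_j$ with each $F_j$ primitive in $\mathcal O_{K_{\mathfrak p}}[x]$, observe that $\mathrm{disc}(F)=\prod_j \mathrm{disc}(F_j)\cdot\prod_{j<k}\mathrm{Res}(F_j,F_k)^2$ with all resultant factors in $\mathcal O_{K_{\mathfrak p}}$, and then for each irreducible $F_j$ use that (after scaling by a unit) either $F_j$ or $F_j^{*}$ has unit leading coefficient, whence the classical different computation gives $\mathfrak d_{L_j/K_{\mathfrak p}}\mid \mathrm{disc}(F_j)$ for the corresponding local extension $L_j$. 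Assembling these yields $D_{M/K}\mid \mathrm{disc}(F)\mathcal O_K$ globally. This is exactly the content of Silverman's Theorem~2, so your plan is sound; just be aware that the one-line ``pass to $F^{*}$'' does not quite finish the job on its own.
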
 

\begin{proof} 
    See \cite[Theorem 2]{Sil1}. 
\end{proof}

\begin{Lemma}\label{Nnumber} 
    Let $S\subset\QB$ be a subset and $w$ be a weight. 
    Assume that $S_0\subsetneq S_1\subsetneq S_2\subsetneq\cdots$ is an ascending chain of non-empty subsets of $S$ satisfying the following two conditions. 
    \begin{parts}
        \Part{(1)}
            $S_i$ has $h^w$-{\rm(N)} for all $i\in\Z_{\geq0}$. 
        
        \Part{(2)}
            $S=\bigcup_{i\in\Z_{\geq0}}S_i$.
    \end{parts} 
    Then we have 
    \[
        \Nor(A,h^w)=\liminf_{i\rightarrow\infty}\inf\l(S_i\setminus S_{i-1}\r). 
    \]
\end{Lemma}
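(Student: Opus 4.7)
The plan is to prove the two opposing inequalities after first recording the underlying partition structure of $S$. Because the chain $S_0 \subsetneq S_1 \subsetneq S_2 \subsetneq \cdots$ is strictly ascending, each difference $S_i \setminus S_{i-1}$ with $i\geq 1$ is non-empty, and the family $\{S_i \setminus S_{i-1}\}_{i\geq 1}$ together with $S_0$ partitions $S$. In particular, for every $C\in\R_{>0}$,
\begin{equation*}
B(S, h^w, C) = B(S_0, h^w, C) \sqcup \bigsqcup_{i\geq 1}\{a \in S_i\setminus S_{i-1} \mid h^w(a) < C\}.
\end{equation*}
I write $L := \liminf_{i\to\infty}\inf\{h^w(a) \mid a\in S_i\setminus S_{i-1}\}$ for the right-hand side of the claimed identity.

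For the inequality $\Nor(S,h^w)\leq L$, I would fix $\varepsilon>0$ and apply the definition of $\liminf$ to obtain infinitely many indices $i$ with $\inf\{h^w(a) \mid a\in S_i\setminus S_{i-1}\}<L+\varepsilon$. Selecting, for each such $i$, a witness $a_i\in S_i\setminus S_{i-1}$ with $h^w(a_i)<L+\varepsilon$ yields infinitely many pairwise distinct elements of $B(S,h^w,L+\varepsilon)$ by disjointness; hence $\Nor(S,h^w)\leq L+\varepsilon$, and letting $\varepsilon\to 0$ closes this direction.

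For the reverse inequality I would argue by contradiction. Suppose $\Nor(S,h^w)<L$, and fix $C$ with $\Nor(S,h^w)<C<L$. Then $B(S,h^w,C)$ is infinite by definition of the Northcott number. On the other hand, since $C<L$, the definition of $\liminf$ furnishes an integer $I$ such that $\inf\{h^w(a) \mid a\in S_i\setminus S_{i-1}\}>C$ for every $i\geq I$. Feeding this back into the partition of $B(S,h^w,C)$ displayed above gives $B(S,h^w,C)=B(S_{I-1},h^w,C)$, and the latter is finite by the hypothesis that $S_{I-1}$ has $h^w$-(N); this contradicts the infinitude established above.

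The argument is essentially bookkeeping, so no genuine obstacle arises. The only point requiring minor care is that the $\liminf$ implicitly runs over $i\geq 1$ (so no convention for $S_{-1}$ is needed) and that the strictness of each inclusion guarantees the differences $S_i\setminus S_{i-1}$ are non-empty, rendering each individual infimum finite and the $\liminf$ itself well defined.
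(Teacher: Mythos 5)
Your argument is correct. Note, though, that the paper does not actually prove this lemma: its ``proof'' is a one-line citation to \cite[Lemma 6]{PTW}, so you have supplied the argument that the authors outsource. What you wrote is the standard bookkeeping proof and it is sound: the decomposition of $B(S,h^w,C)$ along the partition $S_0,\,S_1\setminus S_0,\,S_2\setminus S_1,\dots$ is valid because $S=\bigcup_i S_i$; the upper bound $\Nor(S,h^w)\leq L$ follows from picking, for infinitely many indices realizing the $\liminf$, witnesses of height $<L+\varepsilon$ (pairwise distinct by disjointness of the differences), and is vacuous when $L=\infty$; the lower bound follows from your contradiction argument, where for $C<L$ the differences $S_i\setminus S_{i-1}$ with $i\geq I$ contribute nothing to $B(S,h^w,C)$, so $B(S,h^w,C)=B(S_{I-1},h^w,C)$ is finite by the $h^w$-(N) hypothesis on $S_{I-1}$ (and $\Nor(S_{I-1},h^w)=\infty$ does indeed force all these balls to be finite, since the defining set of thresholds is bounded below by $0$). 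You also correctly read through the statement's abbreviations: $\Nor(A,h^w)$ is a typo for $\Nor(S,h^w)$, and $\inf\l(S_i\setminus S_{i-1}\r)$ means $\inf\{h^w(a)\mid a\in S_i\setminus S_{i-1}\}$, which is finite precisely because the inclusions are strict, as you observed. The only thing your write-up buys beyond the paper is self-containedness; there is no methodological divergence to speak of, since the cited lemma of Pazuki--Technau--Widmer is proved in essentially this way.
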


\begin{proof} 
    See \cite[Lemma 6]{PTW}. 
\end{proof}

\begin{proof}[Proof of Theorem \ref{explicit}] 
    We set $\alpha_i=(p_i/q_i)^{1/d_i}$, $K_0=\Q$, and $K_i=K_{i-1}(\alpha_i)$ for each $i\in\Z_{>0}$. 
    First, we apply Lemma \ref{Nnumber} to $L^{(N)}=\bigcup_{i\in\Z_{\geq0}}K_i^{(N)}$. 
    Take any $i\in\Z_{>0}$ with $d_i>N$ and $a\in K_i^{(N)}\setminus K_{i-1}^{(N)}$. 
    By Proposition \ref{Sanosan}, there exists an integer $k\in [0,d_i-1]$ such that $K_{i-1}(\zeta_{d_i}^k\alpha_i)\subset K_{i-1}(a)$. 
    We set $s=[K_{i-1}(a):K_{i-1}(\zeta_{d_i}^k\alpha_i)]$. 
    Note that the inequalities 
    \begin{align} 
        sd_i\leq \deg(a) \leq s\prod_{j=1}^id_j 
        \label{pr1}
    \end{align} 
    hold by the equalities 
    \[ 
        sd_i=[K_{i-1}(a):K_{i-1}(\zeta_{d_i}^k\alpha_i)][K_{i-1}(\zeta_{d_i}^k\alpha_i):K_{i-1}]=[K_{i-1}(a):K_{i-1}] 
    \] 
    and 
    \begin{align*}
        s\prod_{j=1}^id_j& =[K_{i-1}(a):K_{i-1}(\zeta_{d_i}^k\alpha_i)][K_{i-1}(\zeta_{d_i}^k\alpha_i):K_{i-1}][K_{i-1}:\Q] \\
        & =[K_{i-1}(a):\Q].
    \end{align*} 
    We also have 
    \begin{align} 
        s 
        &= \f{[K_{i-1}(a):K_{i-1}(\zeta_{d_i}^k\alpha_i)][K_{i-1}(\zeta_{d_i}^k\alpha_i):K_{i-1}]}{[K_{i-1}(\zeta_{d_i}^k\alpha_i):K_{i-1}]} 
        = \f{[K_{i-1}(a):K_{i-1}]}{d_i} \\
        &\leq \f{[K_i(a):K_{i-1}]}{d_i} 
        = \f{[K_i(a):K_i][K_i:K_{i-1}]}{d_i} 
        = [K_i(a):K_i] \\
        &\leq N. 
        \label{pr0}
    \end{align} 
    Applying Lemma \ref{Disc} to the extensions 
    \[ 
        \begin{matrix} 
            K_{i-1}(a) & & /K_{i-1} & /\Q, \\
            K_{i-1}(a) & /K_{i-1}(\zeta_{d_i}^k\alpha_i) & & /\Q, & \t{and} \\
            & \ \, K_{i-1}(\zeta_{d_i}^k\alpha_i) & /K_{i-1} & /\Q, 
        \end{matrix}
    \]
    we have 
    \begin{align*} 
        D_{K_{i-1}(a)/\Q} 
        &= N_{K_{i-1}/\Q}(D_{K_{i-1}(a)/K_{i-1}})D_{K_{i-1}/\Q}^{[K_{i-1}(a):K_{i-1}]}, \\
        D_{K_{i-1}(a)/\Q} 
        &= N_{K_{i-1}(\zeta_{d_i}^k\alpha_i)/\Q}(D_{K_{i-1}(a)/K_{i-1}(\zeta_{d_i}^k\alpha_i)})D_{K_{i-1}(\zeta_{d_i}^k\alpha_i)/\Q}^s, \t{ and} \\
        D_{K_{i-1}(\zeta_{d_i}^k\alpha_i)/\Q}
        &= N_{K_{i-1}/\Q}(D_{K_{i-1}(\zeta_{d_i}^k\alpha_i)/K_{i-1}})D_{K_{i-1}/\Q}^{[K_{i-1}(\zeta_{d_i}^k\alpha_i):K_{i-1}]} 
    \end{align*} 
    Since $p_i$ and $q_i$ does not ramify in $K_{i-1}$ and ramify totally in $\Q(\zeta_{d_i}^k\alpha_i)$,
    we have 
    \[ 
        p_i, q_i\nmid D_{K_{i-1}/\Q} \ \t{ and } \
        (p_iq_i)^{[K_{i-1}:\Q](d_i-1)}\mid N_{K_{i-1}/\Q}(D_{K_{i-1}(\zeta_{d_i}^k\alpha_i)/K_{i-1}}) 
    \] 
    by \cite[Lemma 3.2]{Oka3}. 
    Thus, we get
    \[ 
        (p_iq_i)^{[K_{i-1}:\Q](d_i-1)s}\mid N_{K_{i-1}/\Q}(D_{K_{i-1}(a)/K_{i-1}}). 
    \] 
    By \eqref{n2}, we conclude that 
    \begin{align} 
        2[K_{i-1}:\Q](d_i-1)s\log(p_i)
        < \log(N_{K_{i-1}/\Q}(D_{K_{i-1}(a)/K_{i-1}})). 
        \label{pr2} 
    \end{align} 
    Applying Lemma \ref{Siliq} to the extension $K_{i-1}(a)/K_{i-1}$, we get the following inequalities 
    \begin{align} 
        h(a) 
        \geq&~ \f{1}{2(sd_i-1)}\l(\f{\log(N_{K_{i-1}/\Q}(D_{K_{i-1}(a)/K_{i-1}}))}{sd_i[K_{i-1}:\Q]}-\log(sd_i)\r) \\
        >&~ \f{1}{2(sd_i-1)}\l(\f{2(d_i-1)\log(p_i)}{d_i}-\log(sd_i)\r) \\ 
        \geq&~ \f{1}{2(sd_i-1)}\l(\f{2N(d_i-1)}{w(ND_i)}c-\log(sd_i)\r), 
        \label{pr3}
    \end{align} 
    where the second and the third inequalities follow from \eqref{pr2} and \eqref{n1}, respectively. 
    If $\lim_{d\rightarrow\infty}w(d)=\infty\t{ or }0$, then we have 
    \begin{align*} 
        &~ h^w(a) \\
        >&~ \f{w(sD_i)}{2(sd_i-1)}\l(\f{2N(d_i-1)}{w(ND_i)}c-\log(sd_i)\r) && \t{by (\hyperlink{d01}{$d_0$-1}), \eqref{n-1}, \eqref{pr1}, and \eqref{pr3}} \\
        =&~ \f{N(d_i-1)}{sd_i-1} \f{s}{N} \f{w(sD_i)}{sD_i} \f{ND_i}{w(ND_i)} c -\f{w(sD_i)\log(sd_i)}{2(sd_i-1)} \\
        \geq&~ \f{N(d_i-1)}{sd_i-1} \f{s}{N} c -\f{w(sD_i)\log(sd_i)}{2(sd_i-1)} && \t{by (\hyperlink{d02}{$d_0$-2}),  \eqref{n-1}, and \eqref{pr0}} \\
        \rightarrow&~ c \quad (\t{as $i\rightarrow\infty$}) && \t{by (\hyperlink{condition(1)}{1}).}
    \end{align*}
    On the other hand, if $\lim_{d\rightarrow \infty}w(d)$ is positive and finite, then we have 
    \begin{align*} 
        h^w(a) 
        >&~ \f{w(\deg(a))}{w(ND_i)} \f{N(d_i-1)}{sd_i-1} c -\f{w(\deg(a))\log(sd_i)}{2(sd_i-1)} && \t{by \eqref{pr3}} \\
        \rightarrow&~ \f{N}{s}c \quad (\t{as $i\rightarrow\infty$}) && \t{by (\hyperlink{condition(1)}{1}) and \eqref{pr1}} \\
        \geq&~ c && \t{by \eqref{pr0}.} 
    \end{align*} 
    Therefore, by Lemma \ref{Nnumber}, we have the inequality 
    \begin{align} 
        \Nor(L^{(N)},h^w) 
        \geq 
        c
        \label{prr}
    \end{align}
    in any cases.
    
    Next, if $\lim_{d\rightarrow\infty}w(d)>0$, then the inequalities 
    \begin{align*} 
        h^w(\alpha_i^{1/N}) 
        =&~ \f{w(Nd_i)}{Nd_i} \log(q_i) && \t{by \eqref{n2}} \\
        <&~ \f{w(Nd_i)}{Nd_i} \l(\f{Nd_i}{w(Nd_i)}c+\log(4)\r) && \t{by \eqref{n1} and \eqref{n2}} \\
        =&~ c+\f{w(Nd_i)}{Nd_i}\log(4) \\
        \rightarrow&~ c \quad (\t{as $i\rightarrow\infty$}) && \t{by (\hyperlink{condition(1)}{1})}
    \end{align*}
    hold. 
    On the other hand, if $\lim_{d\rightarrow\infty}w(d)=0$, then the inequalities 
    \begin{align*} 
        &~ h^w\l(\l(\prod_{j=1}^i\alpha_j\r)^{1/N}\r) \\
        =&~ \f{w(ND_i)}{N} h\l(\prod_{j=1}^i\alpha_j\r) \\ 
        \leq&~ \f{w(ND_i)}{N} \sum_{j=1}^ih(\alpha_j) \\ 
        =&~ \f{w(ND_i)}{N} \sum_{j=1}^i\f{\log(q_j)}{d_j} && \t{by \eqref{n2}} \\
        <&~ \f{w(ND_i)}{N} \l(\sum_{j=1}^i\f{Nc}{w(ND_j)}+i\log(4)\r) && \t{by \eqref{n1} and \eqref{n2}} \\ 
        \leq&~ c +\f{iw(ND_i)}{w(ND_{i-1})}c+iw(ND_i)\f{\log(4)}{N} && \t{by (\hyperlink{d01}{$d_0$-1}) and \eqref{n-1}} \\
        \rightarrow&~ c \quad (\t{as $i\rightarrow\infty$}) && \t{by \eqref{n4}} 
    \end{align*} 
    hold. 
    Therefore, we have the inequality 
    \begin{align} 
        \Nor(L^{(N)},h^w) 
        \leq 
        c 
        \label{prl} 
    \end{align} 
    in any cases. 
    
    The inequalities \eqref{prr} and \eqref{prl} complete the proof. 
\end{proof}

Then, we get the following stronger result, which will be used in the proof of Proposition \ref{spectral}. 

\begin{Theorem}\label{stronger} 
    Let $w$, $c$, and $L$ be as in Theorem \ref{explicit} of the case $N=1$. 
    We further assume that $w$ satisfies the condition
    \begin{parts} 
        \Part{(3)}\hypertarget{condition(3)} 
            For all $M\in\Z_{>0}$, the limit 
            \[
                l_w(M)=\lim_{d\rightarrow\infty}\f{w(Md)}{w(d)}
            \]
            exists and is positive.
    \end{parts}
    Then
    \[ 
        \Nor(L^{(M)},h^w)=\f{l_w(M)}{M}c 
    \] 
    holds for all $M\in\Z_{>0}$. 
\end{Theorem}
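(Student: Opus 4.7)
The strategy is to adapt the proof of Theorem \ref{explicit} to the field $L$ built there with $N=1$, but now analysing $L^{(M)}$ for an arbitrary $M$. Since $L^{(M)}=\bigcup_{i\geq0}K_i^{(M)}$ and each $K_i^{(M)}$ has $h^w$-(N) by Theorem \ref{North}, Lemma \ref{Nnumber} reduces the problem to computing
\[
    \Nor(L^{(M)},h^w) = \liminf_{i\to\infty}\inf\bigl\{h^w(a)\mid a\in K_i^{(M)}\setminus K_{i-1}^{(M)}\bigr\}.
\]

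For the lower bound, I replay the discriminant-and-Silverman chain from the proof of Theorem \ref{explicit} verbatim. For $a\in K_i^{(M)}\setminus K_{i-1}^{(M)}$ with $d_i>M$, Proposition \ref{Sanosan} (applied to $K=K_{i-1}$ and $\alpha=\alpha_i$) supplies an integer $k$ with $K_{i-1}(\zeta_{d_i}^k\alpha_i)\subset K_{i-1}(a)$; setting $s=[K_{i-1}(a):K_{i-1}(\zeta_{d_i}^k\alpha_i)]$, the same computation gives $s\leq[K_i(a):K_i]\leq M$ together with
\[
    h^w(a) \geq \f{w(sD_i)}{w(D_i)}\cdot\f{d_i-1}{sd_i-1}\,c - \f{w(sD_i)\log(sd_i)}{2(sd_i-1)}.
\]
Condition (\hyperlink{condition(3)}{3}) sends $w(sD_i)/w(D_i)$ to $l_w(s)$, the ratio $(d_i-1)/(sd_i-1)$ tends to $1/s$, and (\hyperlink{condition(1)}{1}) kills the error term; since $s$ ranges over the finite set $\{1,\ldots,M\}$ these limits are uniform, so $\liminf_i\inf_a h^w(a)\geq\min_{1\leq s\leq M}l_w(s)c/s$. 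The decisive algebraic observation is that (\hyperlink{condition(2)}{2}) together with $s\leq M$ forces $l_w(s)/s\geq l_w(M)/M$: dividing $w(sd)/(sd)\geq w(Md)/(Md)$ by $w(d)$ and letting $d\to\infty$ yields precisely this. Hence the overall lower bound is $l_w(M)c/M$.

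For the upper bound, I exhibit witnesses in $K_i^{(M)}\setminus K_{i-1}^{(M)}$ attaining $l_w(M)c/M$. When $\lim_{d\to\infty}w(d)>0$ (so $D_i=d_i$), take $a_i=\alpha_i^{1/M}$; then $\deg(a_i)=Md_i$, the inclusion $a_i\in K_i^{(M)}\setminus K_{i-1}^{(M)}$ is immediate, and computing $h^w(a_i)=w(Md_i)\log(q_i)/(Md_i)$ via \eqref{n2} and condition (\hyperlink{condition(3)}{3}) yields $\limsup h^w(a_i)\leq l_w(M)c/M$. When $\lim_{d\to\infty}w(d)=0$ (so $D_i=\prod_{j\leq i}d_j$), take $b_i$ a fixed $M$-th root of $\prod_{j=1}^i\alpha_j$; since $b_i^M\in K_i$ and $[K_{i-1}(b_i):K_{i-1}]=d_i\cdot[K_i(b_i):K_i]\geq d_i>M$, we have $b_i\in K_i^{(M)}\setminus K_{i-1}^{(M)}$, and the telescoping estimate used in the upper-bound part of Theorem \ref{explicit} (with $N$ replaced by $M$) gives
\[
    h^w(b_i) \leq \f{w(MD_i)}{w(D_i)}\cdot\f{c}{M} + o(1) \longrightarrow \f{l_w(M)c}{M}.
\]

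The principal technical obstacle is the $\lim w=0$ upper bound, where one must shepherd the extra factor $w(MD_i)/w(D_i)$ through the delicate tail-summation already present in Theorem \ref{explicit}. This is handled by observing that $w(MD_i)\leq w(D_i)$ eventually (by the non-increasing assumption on $w$), so the estimate $\sum_{j<i}w(MD_i)/w(D_j)\leq\sum_{j<i}w(D_i)/w(D_j)\to0$ from \eqref{n4} still applies; the only surviving contribution to $w(MD_i)\sum_{j=1}^i1/w(D_j)$ is the term $j=i$, which supplies the desired prefactor $l_w(M)$ via condition (\hyperlink{condition(3)}{3}).
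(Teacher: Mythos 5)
Your proposal is correct and takes essentially the same route as the paper: the lower bound replays the Proposition \ref{Sanosan}/discriminant/Silverman chain of Theorem \ref{explicit} with $N$ replaced by $M$, uses condition (\hyperlink{condition(3)}{3}) to identify the limit $l_w(s)$, and the monotonicity of $w(d)/d$ to get $l_w(s)/s\geq l_w(M)/M$, while the upper bound uses the same witnesses $\alpha_i^{1/M}$ (resp. $\bigl(\prod_{j\leq i}\alpha_j\bigr)^{1/M}$). The only small caveat is that your displayed lower-bound inequality presupposes $w(\deg(a))\geq w(sD_i)$, which can fail when $w$ is non-increasing with a positive limit; the paper treats that subcase separately by keeping $w(\deg(a))$, but since all the relevant ratios then tend to $1$ your limiting conclusion is unaffected.
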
 

\begin{proof} 
    First, we see that $\Nor(L^{(M)},h^w)\geq(l_w(M)/M)c$. 
    Let $K_i$ be as in the proof of Theorem \ref{explicit}. 
    Take any $i\in\Z_{>0}$ with $d_i>M$ and $a\in K_i^{(M)}\setminus K_{i-1}^{(M)}$. 
    We also let $s$ be as that in the proof of Theorem \ref{explicit}. 
    Then by the same discussion in the proof of Theorem \ref{explicit}, if $\lim_{d\rightarrow\infty}w(d)=\infty\t{ or }0$, we have 
    \begin{align*} 
        h^w(a) 
        >&~ \f{w(sD_i)}{2(sd_i-1)}\l(\f{2(d_i-1)}{w(D_i)}c-\log(sd_i)\r) \\
        =&~ \f{d_i-1}{sd_i-1} \f{w(sD_i)}{w(D_i)} c -\f{w(sD_i)\log(sd_i)}{2(sd_i-1)} \\ 
        \rightarrow&~ \f{l_w(s)}{s}c \quad (\t{as } i\rightarrow\infty) \\
        \geq&~ \f{l_w(M)}{M}c. 
        \end{align*} 
        If $\lim_{d\rightarrow\infty}w(d)$ is positive and finite, we have 
        \begin{align*} 
        h^w(a) 
        >&~ \f{w(\deg(a))}{w(D_i)} \f{d_i-1}{sd_i-1} c -\f{w(\deg(a))\log(sd_i)}{2(sd_i-1)} \\
        \rightarrow&~ \f{1}{s}c \quad (\t{as } i\rightarrow\infty) \\
        \geq&~ \f{1}{M}c \\
        =&~ \f{l_w(M)}{M}c. 
    \end{align*} 
    
    Next, we see that $\Nor(L^{(M)},h^w)\leq(l_w(M)/M)c$. 
    If $\lim_{d\rightarrow\infty}w(d)>0$, we have 
    \begin{align*} 
        h^w(\alpha_i^{1/M}) 
        <&~ \f{w(Md_i)}{Md_i} \l(\f{d_i}{w(d_i)}c+\log(4)\r) \\
        =&~ \f{1}{M}\f{w(Md_i)}{w(d_i)}c+\f{w(Md_i)}{Md_i}\log(4) \\
        \rightarrow&~ \f{l(M)}{M}c. \quad (\t{as } i\rightarrow\infty) 
    \end{align*} 
    If $\lim_{d\rightarrow\infty}w(d)=0$, we have 
    \begin{align*} 
        &~ h^w\l(\l(\prod_{j=1}^i\alpha_j\r)^{1/M}\r) \\
        <&~ \f{w(MD_i)}{M} \l(\sum_{j=1}^i\f{c}{w(D_j)}+i\log(4)\r) \\ 
        \leq&~ \f{1}{M}\f{w(MD_i)}{w(D_i)}c +\f{iw(MD_i)}{w(D_{i-1})}c+iw(MD_i)\f{\log(4)}{M} \\
        =&~ \f{1}{M}\f{w(MD_i)}{w(D_i)}c +\f{iw(D_i)}{w(D_{i-1})}\f{w(MD_i)}{w(D_i)}c+iw(MD_i)\f{\log(4)}{M} \\
        \rightarrow&~ \f{l(M)}{M}c. \quad (\t{as } i\rightarrow\infty) 
    \end{align*} 
\end{proof}

\begin{Example} 
    For $\g\in\R_{<1}$ and $w(d)=d^\g/\log(d)$, $d^\g$, or $d^\g\log(d)$, the field $L$ in Theorem \ref{stronger} coincides with that dealt in \cite[Theorem 4.1]{OS}. 
    For all $M\in\Z_{>0}$ and each of three weights $w$, we observe that $l_w(M)=M^\g$.  
    Therefore we have 
    \[ 
        \Nor(L^{(M)},h^w)=\f{c}{M^{1-\g}}. 
    \] 
    \end{Example} 
    
\begin{Example} 
    For a weight $w(d)=d^{-d^2}$, the field $L$ in Theorem \ref{explicit} of the case $N=1$ is an example of those in \cite[Proposition 5.5]{OS}. 
    However, we observe that $l_w(M)=0$ for all $M\in\Z_{\geq2}$. 
\end{Example}

\section{Northcott numbers for the weighted spectral heights}\label{s} 

In this section, we apply Theorem \ref{stronger} to study the inequalities \eqref{Norineq}. 
First, we quickly recall the spectral height $\hs$. 
For a number field $K$ and each $\bm{a}=(a_0,\ldots,a_n)\in K^{n+1}\setminus\{\bm{0}\}$, we set 
\begin{align*} 
    h_2(\bm{a})=
    &~ \sum_{\substack{v\in\M_K\\ v:\t{archimedean}}}\f{[K_v:\Q_v]}{[K:\Q]}\log\l(\max_{0\leq i\leq n}\{|a_i|_v\}\r) \\
    &~ +\sum_{\substack{v\in\M_K\\ v:\t{non-archimedean}}}\f{[K_v:\Q_v]}{[K:\Q]}\log\l(\sqrt{\sum_{i=0}^n|a_i|_v^2}\r) 
\end{align*} 
and $h_2(\bm{0})=0$. 
As the Weil height $h$, we may regard $h_2$ as a function on $\Pr^n(\QB)$. 
For each $A\in\M_n(\QB)$, we set 
\begin{align*} 
    \hop(A)& =\sup_{\bm{x}\in\QB^n}\l\{h_2(A\bm{x})-h_2(\bm{x})\r\} \ \t{ and} \\ 
    \hs(A)& =\lim_{k\rightarrow\infty}\f{\hop(A^k)}{k}. 
\end{align*} 
For each $A=(a_{ij})\in\M_n(\QB)\setminus\{O\}$, we fix a non-zero entry $a_{ij}$ and set 
\[ 
    \deg(A)=[\Q(a_{kl}/a_{ij} \mid 1\leq k,l\leq n):\Q], 
\] 
while we set $\deg(O)=1$. 
Here, we remark that, for example, 
\[ 
    A=\begin{pmatrix}1&1\\-1&1\end{pmatrix} 
    \t{ and }
    B=\begin{pmatrix}1+\sqrt{-1}&0\\0&1-\sqrt{-1}\end{pmatrix} 
\]
satisfy that $A\sim\c B$ and $\deg(A)=1<2=\deg(B)$. 
Hence we also set 
\[
    \deg([A]\c)=\min_{B\in [A]\c}\{\deg(B)\} 
\]
for $A\in\M_n(\QB)$.

\begin{Lemma}\label{deg vs deg} 
    For all $A\in{\rm M}_n(\QB)$, it holds that 
    \[
        n!\deg([A]\c) \geq \deg(\lambda_1^{(A)},\ldots,\lambda_n^{(A)}). 
    \]
\end{Lemma}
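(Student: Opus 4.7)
The plan is to pick a representative $B\in[A]\c$ realizing the minimum $\deg([A]\c)=\deg(B)$ and then bound the degree of the eigenvalue tuple via a two-step tower $\Q\subset K\subset K(\lambda_1^{(B)}/b_{ij},\ldots,\lambda_n^{(B)}/b_{ij})$, where $K$ is generated by the normalised entries of $B$ and $b_{ij}$ is a chosen non-zero entry. The factor of $\deg(B)$ will come from the first step of the tower, and the factor of $n!$ from the second.

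First, I would choose $B\in[A]\c$ with $\deg(B)=\deg([A]\c)$. Writing $B=cP^{-1}AP$ for some $c\in\QB^\times$ and $P\in\GL_n(\QB)$, the eigenvalues of $B$ agree with $c\lambda_k^{(A)}$ up to permutation. Hence $(\lambda_1^{(A)},\ldots,\lambda_n^{(A)})$ and $(\lambda_1^{(B)},\ldots,\lambda_n^{(B)})$ define the same class in $\Pr^{n-1}(\QB)$ whenever some eigenvalue is non-zero, so their values under $\deg$ coincide. If every eigenvalue of $A$ vanishes, then $\deg(\bm{0})=1\leq n!\deg([A]\c)$ and we are done, so I may assume that some $\lambda_{k_0}^{(B)}$ is non-zero from this point on.

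Next, I would fix a non-zero entry $b_{ij}$ of $B$ and set $K=\Q(b_{kl}/b_{ij}\mid 1\leq k,l\leq n)$, so that $[K:\Q]=\deg(B)=\deg([A]\c)$. The scaled matrix $b_{ij}^{-1}B$ has entries in $K$, hence its characteristic polynomial lies in $K[x]$, has degree $n$, and has roots $\lambda_1^{(B)}/b_{ij},\ldots,\lambda_n^{(B)}/b_{ij}$. Since the splitting field over $K$ of a polynomial of degree $n$ has degree at most $n!$ over $K$ (standard Galois theory), I obtain
\[
    [K(\lambda_1^{(B)}/b_{ij},\ldots,\lambda_n^{(B)}/b_{ij}):\Q]\leq n!\deg([A]\c).
\]

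Finally, each ratio $\lambda_k^{(B)}/\lambda_{k_0}^{(B)}=(\lambda_k^{(B)}/b_{ij})/(\lambda_{k_0}^{(B)}/b_{ij})$ lies in the field above, hence
\[
    \Q(\lambda_k^{(B)}/\lambda_{k_0}^{(B)}\mid 1\leq k\leq n)\subset K(\lambda_1^{(B)}/b_{ij},\ldots,\lambda_n^{(B)}/b_{ij}),
\]
which yields the desired inequality. No serious obstacle is anticipated; the only delicate point is to factor the bound through the intermediate field $K$ rather than attempting to bound $\deg(\lambda_1^{(B)},\ldots,\lambda_n^{(B)})$ over $\Q$ in one shot, since the $n!$ estimate for the splitting field is an extension degree over $K$, not over $\Q$.
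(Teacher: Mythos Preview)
Your proof is correct and follows essentially the same approach as the paper: both factor the degree bound through the field $K$ generated by the normalised entries of a chosen representative, obtaining the factor $\deg(B)$ from $[K:\Q]$ and the factor $n!$ from the splitting-field bound for the characteristic polynomial of $b_{ij}^{-1}B$ over $K$. You are, if anything, more careful than the paper about first passing to a minimising representative $B\in[A]\c$ and verifying that the projective class of the eigenvalue tuple is unchanged under $\sim\c$.
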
 

\begin{proof} 
    Take any $A=(a_{ij})\in\M_n(\QB)\setminus\{O\}$. 
    We may assume that $a_{ij}=1$ for some integers $i,j\in [1,n]$. 
    Thus we have 
    \begin{align} 
        \deg(A)=[\Q(a_{kl} \mid 1\leq k,l \leq n):\Q]. 
        \label{7.4} 
    \end{align} 
    Let $X^n+b_1X^{n-1}+\cdots+b_{n-1}X+b_n\in \QB[X]$ be the characteristic polynomial of $A$. 
    We know that $b_m\in\Q(a_{kl} \mid 1\leq k,l \leq n)$ for all $m\in [1,n]$. 
    Then we have 
    \begin{align} 
        [\Q(b_m \mid 1\leq m\leq n]):\Q] \leq [\Q(a_{kl} \mid 1\leq k,l \leq n):\Q].
        \label{7.5}
    \end{align} 
    Now, note that $b_k\in\Q(\lambda_1^{(A)},\ldots,\lambda_n^{(A)})$ holds for all $0\leq k\leq n-1$. 
    Thus, we conclude that 
    \begin{align*} 
        &~\deg(\lambda_1^{(A)},\ldots,\lambda_n^{(A)}) \\ 
        \leq&~ [\Q(\lambda_m^{(A)} \mid 1\leq m\leq n):\Q] \\ 
        =&~ [\Q(b_m,\lambda_m^{(A)} \mid 1\leq m\leq n):\Q] \\ 
        =&~ [\Q(b_m,\lambda_m^{(A)} \mid 1\leq m\leq n):\Q(b_m \mid 1\leq m\leq n)] \\
        &~ \times[\Q(b_m \mid 1\leq m\leq n):\Q] \\ 
        \leq&~ [\Q(\lambda_m^{(A)} \mid 1\leq m\leq n):\Q][\Q(b_m \mid 1\leq m\leq n):\Q]\\
        \leq&~ n! \deg(A) \t{\hspace{12em} by \eqref{7.4} and \eqref{7.5}}. 
    \end{align*} 
    These complete the proof. 
\end{proof}

Now, we define our {\it weighted spectral heights $\hs^w$}. 
For a weight $w$ and $[A]\c\in\Conj(\M_n(\QB))$, we set 
\[ 
    \hs^w([A]\c)=w(\deg([A]\c))\hs(A). 
\] 

The purpose of this section is to prove the following. 

\begin{Proposition}\label{spectral} 
    Let $w$ and $L$ be as in Theorem \ref{stronger}. 
    We furthermore assume that $w$ is non-decreasing. 
    Then they hold that 
    \begin{align}
        &\hphantom{=} \f{l_w(n^2)}{n^2n!}\Nor(L,h^w)
        \leq \Nor(\Conj({\rm M}_n(L),\hs^w) \\
        & \leq \Nor(\Conj(\GL_n(L)),\hs^w) 
        \leq \f{1}{n-1}\Nor(L,h^w). 
    \end{align} 
\end{Proposition}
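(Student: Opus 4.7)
\textbf{Plan for Proposition \ref{spectral}.} The middle inequality $\Nor(\Conj(\M_n(L)), \hs^w) \leq \Nor(\Conj(\GL_n(L)), \hs^w)$ will be immediate from the inclusion $\Conj(\GL_n(L)) \subset \Conj(\M_n(L))$, since any infinite $\hs^w$-bounded subset of the smaller set also sits inside the larger. The other two inequalities require actual work, both funneled through Talamanca's identity $\hs(A) = h(\lambda_1^{(A)}, \ldots, \lambda_n^{(A)})$ (Theorem \ref{Talamanca}).

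For the upper bound $\Nor(\Conj(\GL_n(L)), \hs^w) \leq \Nor(L, h^w)/(n-1)$, my plan is to produce, for each nonzero $a \in L$, an explicit conjugacy class whose spectral height is exactly $h(a)/(n-1)$. I would take $A_a \in \GL_n(L)$ to be the block-diagonal matrix whose blocks are the $(n-1)\times(n-1)$ companion matrix of $x^{n-1}-a$ and the $1\times 1$ identity. Its entries lie in $\Q(a)$, so $\deg([A_a]\c) \leq \deg(a)$. Its eigenvalues are $1$ together with the $(n-1)$-th roots of $a$, so by Theorem \ref{Talamanca} and the triviality of roots of unity in projective Weil heights (Theorem \ref{Kronecker}), $\hs(A_a) = h(a^{1/(n-1)}) = h(a)/(n-1)$. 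Because $w$ is non-decreasing, $\hs^w([A_a]\c) \leq h^w(a)/(n-1)$. A short comparison of characteristic polynomials will show that $[A_{a_1}]\c = [A_{a_2}]\c$ forces $a_1 = a_2$ or $a_1 a_2 = 1$, so $a \mapsto [A_a]\c$ is at most two-to-one; hence infinitely many $a \in L$ of small weighted height yield infinitely many small-$\hs^w$ conjugacy classes.

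For the lower bound $\frac{l_w(n^2)}{n^2 n!} \Nor(L, h^w) \leq \Nor(\Conj(\M_n(L)), \hs^w)$, my plan is to extract algebraic numbers in $L^{(n^2)}$ from matrices of small $\hs^w$ and then invoke Theorem \ref{stronger}. Given $[A]\c \in \Conj(\M_n(L))$ with $\hs(A) > 0$, I would pick a nonzero eigenvalue $\lambda_{i_0}^{(A)}$ and consider the ratios $r_j := \lambda_j^{(A)}/\lambda_{i_0}^{(A)}$. Since the characteristic polynomial of $A$ has degree $n$ over $L$, each eigenvalue generates a degree-$\leq n$ extension of $L$, so $r_j \in L^{(n^2)}$. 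The standard comparison of projective and affine Weil heights combined with Theorem \ref{Talamanca} yields $h(r_j) \leq h([\lambda_1^{(A)} : \cdots : \lambda_n^{(A)}]) = \hs(A)$, and Lemma \ref{deg vs deg} gives $\deg(r_j) \leq \deg(\lambda_1^{(A)}, \ldots, \lambda_n^{(A)}) \leq n!\, \deg([A]\c)$. Condition (\hyperlink{condition(2)}{2}) on $w$ (eventual monotonicity of $w(d)/d$) gives $w(n!d) \leq n!\, w(d)$ for all sufficiently large $d$, so for all but finitely many of the classes in question, $h^w(r_j) \leq n!\, \hs^w([A]\c)$. The map sending $[A]\c$ to the multiset $\{r_1, \ldots, r_n\}$ is finite-to-one, since one projective eigenvalue multiset supports at most $p(n)$ Jordan types; hence infinitely many $\hs^w$-small classes produce infinitely many distinct ratios in $L^{(n^2)}$ of small $h^w$. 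This yields $\Nor(L^{(n^2)}, h^w) \leq n!\, \Nor(\Conj(\M_n(L)), \hs^w)$, and the identity $\Nor(L^{(n^2)}, h^w) = (l_w(n^2)/n^2)\, \Nor(L, h^w)$ of Theorem \ref{stronger} closes the argument.

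The most delicate step will be the finite-to-one/injectivity arguments on both sides: for the upper bound, I must rule out accidental identifications among the $A_a$; for the lower bound, I must ensure that infinitely many distinct conjugacy classes truly beget infinitely many distinct ratios, despite permutations, scaling by $\QB^\times$, and Jordan ambiguity. The analytic translation of condition (\hyperlink{condition(2)}{2}) into the constant $n!$ is routine but must be aligned with the ``for all but finitely many'' clause furnished by the Northcott-number argument.
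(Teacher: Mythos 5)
Your proposal is correct and follows essentially the same route as the paper's proof: the left inequality via Theorem \ref{Talamanca}, Lemma \ref{deg vs deg}, the monotonicity hypotheses on $w$ (turning condition (\hyperlink{condition(2)}{2}) into the factor $n!$, with finitely many small-degree exceptions handled by a Northcott/Jordan-type finite-to-one count), and Theorem \ref{stronger} applied to $L^{(n^2)}$; the right inequality via the block matrix $1\oplus(\text{companion matrix of }x^{n-1}-a)$, whose spectral height is $h(a)/(n-1)$. The only cosmetic difference is that the paper evaluates the upper bound along the explicit generators $\alpha_i$ of $L$, using $h^w(\alpha_i)\rightarrow\Nor(L,h^w)$ from the proof of Theorem \ref{stronger}, whereas you feed in arbitrary elements of $L$ of nearly minimal $h^w$ and add an at-most-two-to-one injectivity check — both versions work.
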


\begin{proof}
    First, we prove the left inequality. 
    Take any $C\in\R_{>0}$ with 
    \[
        C < \f{l_w(n^2)}{n^2n!}\Nor(L,h^w). 
    \]
    It is sufficient to prove that the set $B(\Conj(\M_n(L)), \hs^w, C)$ is finite. 
    Take any element $[A]\c\in B(\Conj(\M_n(L)),\hs^w,C)$. 
    Considering the Jordan form, there are only finitely many $[B]\c\in\Conj(\M_n(\QB))$
    such that $(\lambda_1^{(B)},\ldots,\lambda_n^{(B)})=\bm{0}$. 
    Therefore, we may assume $\lambda^{(A)}_1\neq0$.
    Fix $M\in\Z_{>0}$ such that $w(d)/d$ is non-increasing on $\Z_{>M}$. 
    By Lemma \ref{deg vs deg} and Theorem \ref{North},
    there are only finitely many $[B]\c\in B(\Conj(\M_n(L)), \hs^w, C)$ such that $\deg([B]\c)\leq M$. 
    Therefore, we may assume that $\deg([A]\c)>M$. 
    Thus, we have 
    \[ 
        \f{w(\deg([A]\c))}{\deg([A]\c)} 
        \geq \f{w(n!\deg([A]\c))}{n!\deg([A]\c)}. 
    \] 
    Hence, the inequality
    \[ 
        w(\deg([A]\c))
        \geq \f{1}{n!}w(n!\deg([A]\c)) 
    \]
    holds. 
    Therefore, we have 
    \begin{align*} 
        &~\f{l_w(n^2)}{n^2}\Nor(L,h^w) \\
        >&~ n!C \\
        >&~ n!h^w([A]\c) \\ 
        =&~ n!w(\deg([A]\c))h(\lambda_1^{(A)},\ldots,\lambda_n^{(A)}) && \t{by Theorem \ref{Talamanca}} \\
        \geq&~ w(n!\deg([A]\c))h(\lambda_1^{(A)},\ldots,\lambda_n^{(A)}) \\
        \geq&~ w(\deg(\lambda_1^{(A)},\ldots,\lambda_n^{(A)}))h(\lambda_1^{(A)},\ldots,\lambda_n^{(A)}) 
        && \t{by Lemma \ref{deg vs deg}} \\
        \geq&~ w(\deg(\lambda_i^{(A)}/\lambda_1^{(A)}))h(\lambda_i^{(A)}/\lambda_1^{(A)}) \\
        =&~ h^w(\lambda_i^{(A)}/\lambda_1^{(A)}) 
    \end{align*} 
    for all integers $i$ in $[2,n]$. 
    Since $\lambda_i^{(A)}/\lambda_1^{(A)}\in L^{(n^2)}$ holds, Theorem \ref{explicit} implies that there are only finitely many choices for the value of $\lambda_i^{(A)}/\lambda_1^{(A)}$. 
    Thus, we conclude that there are only finitely many possibilities for $[A]\c$.

    Next, we prove the right inequality. 
    For each $i\in\Z_{>0}$, let $d_i$ and $\alpha_i$ be as in Theorem \ref{explicit} and Theorem \ref{stronger}. 
    We set 
    \[ 
        A_i=
        \begin{cases} 
            \begin{pmatrix} 
                1&0 \\
                0&\alpha_i \\
            \end{pmatrix} 
            \in \M_2(L) 
            & (n=2), 
            \\
            \left( 
            \begin{array}{c:c} 
                1 & \huge{O} \\ 
                \hdashline 
                \huge{O} & {\begin{matrix}
                    0 & 0 & 0 & \cdots & 0 & \alpha_i \\
                    1 & 0 & 0 & \cdots & 0 & 0 \\
                    0 & 1 & 0 & \cdots & 0 & 0 \\
                    0 & 0 & 1 & \cdots & 0 & 0 \\
                    \vdots & \vdots & \vdots & \ddots & \vdots & \vdots \\
                    0 & 0 & 0 & \cdots & 1 & 0
                \end{matrix}} \\ 
            \end{array}
            \right) 
            \in\M_n(L) 
            & (n>2). 
        \end{cases} 
    \] 
    Whether $n=2$ or not, they hold that $\deg(A_i)=d_i$ and 
    \begin{align} 
        \l(\lambda_1^{(A_i)},\ldots,\lambda_n^{(A_i)}\r) 
        = \l(1, \alpha_i^{\f{1}{n-1}}, \alpha_i^{\f{1}{n-1}}\zeta_{n-1}, \ldots, \alpha_i^{\f{1}{n-1}}\zeta_{n-1}^{n-1} \r), 
        \label{eigen of A} 
    \end{align} 
    where $\zeta_{n-1}$ is a primitive $(n-1)$-th root of unity. 
    Here, note that 
    \begin{align} 
        h\l(1, \alpha_i^{\f{1}{n-1}}, \alpha_i^{\f{1}{n-1}}\zeta_{n-1}, \ldots, \alpha_i^{\f{1}{n-1}}\zeta_{n-1}^{n-1} \r) 
        =h\l(\alpha_i^{\f{1}{n-1}}\r) 
        \label{rootmulti} 
    \end{align} 
    holds since we have $|a\zeta|_v=|a|_v$ for all $a\in\QB$, $\zeta\in\mu$, and $v\in\mathcal{M}_{\Q(\zeta,a)}$. 
    Therefore, we observe that 
    \begin{align} 
        &~\hs^w([A_i]\c) \\
        \leq&~ w(d_i)\hs([A_i]\c) \\
        =&~ w(d_i)h\l(\lambda_1^{(A_i)},\ldots,\lambda_n^{(A_i)}\r) && \t{by Theorem \ref{Talamanca}} \\ 
        =&~ w(d_i)h\l(\alpha_i^{\f{1}{n-1}}\r) && \t{by \eqref{eigen of A} and \eqref{rootmulti}} \\ 
        =&~ \f{1}{n-1}h^w(\alpha_i) \\
        \rightarrow&~ \f{1}{n-1}\Nor(L,h^w) \quad (\t{as }i\rightarrow\infty) && \t{by the proof of Theorem \ref{stronger}.} 
    \end{align} 
    The inequalities above imply the right inequality. 
\end{proof}

\section{Appendix: Northcott numbers for weighted operator heights}\label{op} 

In Section \ref{s}, we studied relations between the $h^w$-Northcott number of a field $L$ and
that of the matrices with the entries in $L$. 
In this additional section, we address the same topic for $\hop$. 
For each weight $w$ and $A\in\M_n(\QB)$, we set 
\[ 
    \hop^w(A)=w(\deg(A))\hop(A). 
\] 
We may consider $\hop^w(A)$ to be a function on $\Pr(\M_n(\QB))$. 
Here for each field $L\subset\QB$, we set 
\[ 
    \Pr(\M_n(L))=\M_n(L)/\mathord{\sim}, 
\] 
where $A\mathbin{\sim}B$ ($A, B\in\M_n(L)$) means that there exists $c\in\QB^\times$ such that $B=cA$. 
We denote by $[A]$ the class of $A\in\M_n(L)$ in $\Pr(\M_n(L))$. 
We also set 
\begin{align} 
    \Pr(\M_n(L))_{\geq2}&=\{ [A]\in\Pr(\M_n(L)) \mid \rank(A)\geq2 \} \ \t{ and} \\
    \Pr(\GL_n(L))&=\{ [A] \in\Pr(\M_n(L)) \mid A\in\GL_n(L) \}. 
\end{align}

\begin{Proposition}\label{opNorth} 
    For a non-decreasing weight $w$ and a field $L\subset\QB$, they hold that 
    \begin{align} 
        \f{1}{2}\Nor(L,h^w) 
        \leq &\Nor(\Pr({\rm M}_n(L))_{\geq2},\hop^w) \\
        \leq &\Nor(\Pr(\GL_n(L)),\hop^w)
        \leq \Nor(L,h^w). 
    \end{align} 
\end{Proposition}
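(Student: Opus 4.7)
I will verify the three inequalities separately; the right and middle ones are quick, while the left one requires substantial work. Throughout, assume $n\geq 2$. For the right inequality, I construct an injection $L\setminus\{0\}\hookrightarrow\Pr(\GL_n(L))$ by $\alpha\mapsto[A_\alpha]$, where $A_\alpha=\mathrm{diag}(1,\ldots,1,\alpha)$; note $\deg([A_\alpha])=\deg(\alpha)$. At each place $v$ of a number field of definition one has the elementary estimate $\|A_\alpha x\|_{2,v}\leq\max(1,|\alpha|_v)\|x\|_{2,v}$---this is the ultrametric inequality at non-archimedean $v$ and the entrywise comparison $|x_i|\leq\|x\|_2$ at archimedean $v$. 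Taking logarithms and summing over places gives $\hop(A_\alpha)\leq h(\alpha)$, whence $\hop^w([A_\alpha])\leq h^w(\alpha)$ because $w$ applied to the same integer appears on both sides. This yields $\Nor(\Pr(\GL_n(L)),\hop^w)\leq\Nor(L,h^w)$.

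The middle inequality is immediate from the set inclusion $\Pr(\GL_n(L))\subset\Pr(\M_n(L))_{\geq 2}$ (valid since every invertible $n\times n$ matrix has rank $n\geq 2$) together with the fact that Northcott numbers are monotone decreasing under set inclusion. For the left inequality, the starting observation is $\hop(A)\geq h_2(Ae_j)=h_2(\t{col}_j(A))$ for every column index $j$ (plug $x=e_j$ into the supremum defining $\hop$ and use $h_2(e_j)=0$), which combined with the elementary bound $h_2(c)\geq h(c_i:c_k)$ for two entries of a column yields $h(a_{ij}:a_{kj})\leq\hop(A)$ for every $i,j,k$. By the $\rank\geq 2$ hypothesis there exist rows $i_1,i_2$ and columns $j_1,j_2$ with $a_{i_1j_1}a_{i_2j_2}-a_{i_1j_2}a_{i_2j_1}\neq 0$; by pigeonhole I may fix one such quadruple for infinitely many $[A_i]$ in an assumed infinite subfamily with $\hop^w([A_i])<C$. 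Generically (all four relevant entries nonzero), the cross-ratio $\beta_i=(a_{i_1j_1}a_{i_2j_2})/(a_{i_2j_1}a_{i_1j_2})\in L$ is well-defined and unequal to $1$, and Lemma~\ref{without mentioning it}(3) together with the column bound gives $h(\beta_i)\leq h(a_{i_1j_1}:a_{i_2j_1})+h(a_{i_1j_2}:a_{i_2j_2})\leq 2\hop(A_i)$; since $\deg(\beta_i)\leq\deg(A_i)$ and $w$ is non-decreasing, this upgrades to $h^w(\beta_i)\leq 2\hop^w([A_i])<2C$. Degenerate subcases in which some of the four entries vanish are settled by replacing the cross-ratio with an appropriate single within-column ratio, whose height is bounded by $\hop(A)$ directly.

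The main obstacle is producing infinitely many \emph{distinct} values $\beta_i\in L$ from the infinite family $\{[A_i]\}$, which is what is needed to conclude $\Nor(L,h^w)\leq 2C$. I plan to argue by contradiction: if only finitely many values of $\beta_i$ occurred, then combined with the fact that every within-column ratio $a_{ij}/a_{kj}$ of each $A_i$ lies in the finite set $B(L,h^w,C)\cup\{0\}$, the projective class $[A_i]\in\Pr(\M_n(L))$ would be determined by finitely many pieces of data---the projective classes of its columns together with the cross-column scaling encoded by cross-ratios---contradicting the assumed infinitude of $\{[A_i]\}$. Carrying out this reconstruction carefully, accounting for zero entries, the column-by-column scalar normalization, and the interplay between cross-ratios and within-column ratios, is the bulk of the proof.
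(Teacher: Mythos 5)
Your right and middle inequalities are fine: the diagonal test matrices give $\hop^w([A_\alpha])\leq h^w(\alpha)$ (the paper gets this, in fact with equality, from Lemma \ref{diagonalop}), and monotonicity of Northcott numbers under inclusion handles the middle one. The genuine gap is in the left inequality, precisely at the step you defer as ``the bulk of the proof'', and the plan you sketch for it would not work. Your cross-ratio $\beta=(a_{i_1j_1}a_{i_2j_2})/(a_{i_2j_1}a_{i_1j_2})$ is the quotient of two within-column ratios, so it is invariant under rescaling each column independently; hence ``projective classes of the columns plus cross-ratios'' cannot detect the relative scalars between columns, which is exactly the data that must be controlled to get finiteness of $[A]$. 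Concretely, for $A_t=\begin{pmatrix}1&t\\1&2t\end{pmatrix}$ the column classes, all within-column ratios, and the cross-ratio are independent of $t$, yet the classes $[A_t]$ are pairwise distinct; so the finite list of data you propose does not determine $[A]$, and your contradiction argument collapses. (For this family $\hop^w$ does in fact grow with $t$, but seeing that requires evaluating the operator height at a vector mixing the two columns, not at standard basis vectors, which is the only input your bounds use.)

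What is needed, and what the paper does: after confining the column directions to a finite set via $h_2^w(\bm{a}_j)\leq\hop^w(A)$ (Lemma \ref{column vec ineq}) and $\Nor(\Pr^{n-1}(L),h_2^w)\geq\Nor(L,h^w)$ (Lemma \ref{vec North}), one pins down the relative scalars by a case split. If some column $\bm{a}_p$ has two nonzero entries $a_{qp},a_{rp}$, apply $A$ to the auxiliary vector $\bm{a}_p'$ having these entries in positions $k,l$, where $\bm{a}_k,\bm{a}_l$ point in independent directions; then $h_2^w(a_{qp}\bm{a}_k+a_{rp}\bm{a}_l)\leq 2\hop^w(A)<2C<\Nor(\Pr^{n-1}(L),h_2^w)$, and linear independence of the two directions forces the scalar ratio $\lambda(\bm{a}_l)/\lambda(\bm{a}_k)$ into a finite set; chaining over the remaining columns yields finitely many classes. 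If instead every column has at most one nonzero entry (e.g.\ diagonal matrices, where your within-column ratios and cross-ratios carry no information at all), one restricts the supremum defining $\hop$ to a coordinate $2$-plane, compares with a $2\times2$ diagonal matrix, and uses Lemma \ref{diagonalop} to bound $h^w(a_{ij},a_{kl})$, i.e.\ exactly the cross-column entry ratio, by $C<\Nor(L,h^w)$. This is where the factor $1/2$ genuinely enters; your $2C$ bound on cross-ratios spends the same budget on quantities that cannot control $[A]$.
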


\begin{Remark} 
    We dealt with $\Pr({\rm M}_n(L))_{\geq2}$ in Proposition \ref{opNorth} since it is known that
    there are infinitely many $[A]\in\Pr(\M_n(\Q))$ of rank $1$
    such that $\hop(A)=0$ (see \cite[Theorem 1.1 (2)]{Oka2}) or $\hop(A)=\log(2)/2$ (see \cite[p.103]{Tal}), respectively. 
\end{Remark}

For each weight $w$ and $[\bm{a}]\in\Pr^n(\QB)$, we set 
\[ 
h^w(\bm{a})=w(\deg(\bm{a}))h(\bm{a}) \\
\ \t{ and } \ 
h_2^w(\bm{a})=w(\deg(\bm{a}))h_2(\bm{a}). 
\] 

Our proof of Proposition A.1 is based on the following Lemma \ref{diagonalop}, \ref{column vec ineq}, and \ref{vec North}. 

\begin{Lemma}\label{diagonalop} 
    Let $w$ be a weight and $a_1, a_2,\ldots,a_n\in\QB$. 
    Then it holds that
    \[
        \hop^w \l(
        \l(\begin{array}{c c}
            {\begin{array}{cc}
                a_1 & \\
                    & a_2
            \end{array}} & \huge{O} \\ 
          \huge{O} &
          {\begin{array}{c c}
                \ddots & \\
                 & a_n \end{array}}
        \end{array}\r)
        \r)
        = h^w(a_1, a_2, \ldots,a_n).
    \]
\end{Lemma}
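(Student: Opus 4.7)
The plan is to reduce to the unweighted identity $\hop(D) = h(a_1,\ldots,a_n)$, where $D$ denotes the displayed diagonal matrix, and then sandwich this equality between a place-by-place upper bound and a lower bound supplied by Talamanca's Theorem~\ref{Talamanca}. First I would check that $\deg(D) = \deg(a_1,\ldots,a_n)$: choosing some non-zero diagonal entry $a_{i_0}$ as the pivot in the definition of $\deg(D)$, all off-diagonal entries of $D$ vanish and the nontrivial diagonal ratios are $a_{jj}/a_{i_0 i_0}=a_j/a_{i_0}$, so both numbers equal $[\Q(a_j/a_{i_0}\mid 1\le j\le n):\Q]$. Hence the weight factors $w(\deg(\cdot))$ on the two sides coincide, and it suffices to prove the unweighted identity.

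For the upper bound, I would fix an arbitrary $\bm{x}\in\QB^n\setminus\{\bm{0}\}$ and estimate $h_2(D\bm{x})-h_2(\bm{x})$ one place at a time over a number field $K$ containing all the $a_i$ and the ratios of the $x_i$. At each archimedean $v$, $\max_i|a_ix_i|_v\le\l(\max_j|a_j|_v\r)\l(\max_i|x_i|_v\r)$; at each non-archimedean $v$, the crude bound $|a_i|_v\le\max_j|a_j|_v$ yields $\sqrt{\sum_i|a_ix_i|_v^2}\le\l(\max_j|a_j|_v\r)\sqrt{\sum_i|x_i|_v^2}$. Summing over $v$ with the standard local weights gives $h_2(D\bm{x})-h_2(\bm{x})\le h(a_1,\ldots,a_n)$, hence $\hop(D)\le h(a_1,\ldots,a_n)$.

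For the matching lower bound, I would use that $\hop$ is subadditive under products, immediate from the telescoping identity $h_2(AB\bm{x})-h_2(\bm{x})=\l(h_2(AB\bm{x})-h_2(B\bm{x})\r)+\l(h_2(B\bm{x})-h_2(\bm{x})\r)$. Iterating gives $\hop(D^k)\le k\,\hop(D)$, so $\hs(D)=\limk\hop(D^k)/k\le\hop(D)$. Since the eigenvalues of $D$ are precisely $a_1,\ldots,a_n$, Talamanca's Theorem~\ref{Talamanca} yields $\hs(D)=h(a_1,\ldots,a_n)$, and the upper and lower bounds match. I do not foresee any serious obstacle: the only mild subtlety is handling the $\ell^2$-norm appearing at non-archimedean places in the definition of $h_2$, but factoring out $\max_j|a_j|_v$ disposes of it cleanly, and the substantive content of the lower bound is outsourced to Talamanca's theorem, which is already available to us.
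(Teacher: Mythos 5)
Your proof is correct, but it takes a different route from the paper's. The paper disposes of the lemma in one line by invoking Talamanca's local decomposition of the operator height (\cite[Theorem 3.3 (a)]{Tal}, which writes $\hop(A)$ as a sum over places of logarithms of local operator norms) together with the explicit formulae for those norms (\cite[(3.1), (3.2)]{Oka2}); for a diagonal matrix each local operator norm is $\max_i|a_i|_v$, so the sum is exactly $h(a_1,\ldots,a_n)$. You instead prove the upper bound $\hop(D)\leq h(a_1,\ldots,a_n)$ by the elementary place-by-place estimate (which is sound under either convention for $h_2$ at the two kinds of places, since $|a_ix_i|_v\leq(\max_j|a_j|_v)|x_i|_v$ termwise), and you get the matching lower bound indirectly: submultiplicativity $\hop(AB)\leq\hop(A)+\hop(B)$ gives $\hs(D)\leq\hop(D)$, and Theorem \ref{Talamanca} identifies $\hs(D)=h(a_1,\ldots,a_n)$ since the eigenvalues of $D$ are the $a_i$. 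This buys self-containedness relative to the present paper (you only use the theorem already quoted here, rather than Talamanca's Theorem 3.3 (a) and external norm formulae), at the cost of leaning on the deeper Gelfand--Beurling-type eigenvalue formula for what is, in the paper's treatment, a purely local and elementary computation. Your preliminary verification that $\deg(D)=\deg(a_1,\ldots,a_n)$, so that the weight factors agree, is a detail the paper leaves implicit and is correct (including the degenerate case $D=O$, where both sides vanish by the stated conventions).
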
 

\begin{proof} 
    The assertion immediately follows from \cite[Theorem 3.3 (a)]{Tal} and well-known explicit formulae for operator norms (See, \eg, \cite[(3.1) and (3.2)]{Oka2}). 
\end{proof}

\begin{Lemma}\label{column vec ineq} 
    Let $w$ be a non-decreasing weight. 
    For each matrix $A\in{\rm M}_n(\QB)$ and integer $j\in[1,n]$,
    we denote by $\bm{a}_j$ the $j$-th column vector of $A$. 
    Then 
    \[ 
        h_2^w(\bm{a}_j)\leq \hop^w(A) 
    \] 
    holds for all integers $j$ in $[1,n]$. 
\end{Lemma}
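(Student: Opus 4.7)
The plan is to evaluate the supremum defining $h_{\rm op}(A)$ at the standard basis vector $\bm{e}_j$, and then compare the degrees appearing in the two weights. The case $\bm{a}_j = \bm{0}$ is trivial (both sides vanish) and the case $A = O$ likewise, so I will assume $\bm{a}_j \neq \bm{0}$ and $A \neq O$.

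First, observe that $A\bm{e}_j = \bm{a}_j$ and $h_2(\bm{e}_j) = 0$, since $\bm{e}_j \in \Pr^{n-1}(\Q)$. Plugging $\bm{x} = \bm{e}_j$ into the definition of the operator height yields
\[
    h_{\rm op}(A) \geq h_2(A\bm{e}_j) - h_2(\bm{e}_j) = h_2(\bm{a}_j).
\]

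Next I would show that $\deg(\bm{a}_j) \leq \deg(A)$. Fix a non-zero entry $a_{ij}$ of the $j$-th column of $A$. Using this same entry in the definition of $\deg(A)$, we get
\[
    \deg(\bm{a}_j) = [\Q(a_{kj}/a_{ij} \mid 1 \leq k \leq n) : \Q]
    \quad\text{and}\quad
    \deg(A) = [\Q(a_{kl}/a_{ij} \mid 1 \leq k,l \leq n) : \Q].
\]
The first field is a subfield of the second, so $\deg(\bm{a}_j)$ divides $\deg(A)$; in particular $\deg(\bm{a}_j) \leq \deg(A)$.

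Since $w$ is non-decreasing, the last inequality gives $w(\deg(\bm{a}_j)) \leq w(\deg(A))$. Multiplying this with $h_2(\bm{a}_j) \leq h_{\rm op}(A)$ (both sides non-negative) yields
\[
    h_2^w(\bm{a}_j) = w(\deg(\bm{a}_j)) h_2(\bm{a}_j) \leq w(\deg(A)) h_{\rm op}(A) = h_{\rm op}^w(A),
\]
which is the desired inequality. There is no real obstacle here: the only subtlety is matching the definitions of $\deg(\bm{a}_j)$ and $\deg(A)$ by choosing a common normalizing entry $a_{ij}$.
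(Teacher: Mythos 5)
Your proof is correct and follows essentially the same route as the paper's: evaluate the supremum defining $\hop(A)$ at the standard basis vector $\bm{e}_j$ (so $\hop(A)\geq h_2(\bm{a}_j)$) and then use that $w$ is non-decreasing. The only difference is that you spell out the comparison $\deg(\bm{a}_j)\leq\deg(A)$ via a common normalizing entry, a step the paper leaves implicit.
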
 

\begin{proof} 
    Consider the vector 
    \[ 
        \bm{e}_j 
        ={}^t(0,\ldots,0,\hspace{-1ex}\underset{\underset{\tiny{j\mbox{-th}}}{\wedge}}{1}\hspace{-1ex},0,\ldots,0) \in\QB^n
    \]
    for each integer $j\in [1,n]$.
    Then we see that 
    \begin{align*} 
        \hop^w(A) 
        &\geq w(\deg(A))\l(h_2(A\bm{e}_j)-h_2(\bm{e}_j)\r) \\
        &= w(\deg(A))h_2(\bm{a}_j) \\
        &\geq w(\deg(\bm{a}_j))h_2(\bm{a}_j) && \t{since $w$ is non-decreasing} \\
        &= h_2^w(\bm{a}_j) 
    \end{align*} 
    for all integers $j$ in $[1,n]$.
\end{proof}

\begin{Lemma}\label{vec North} 
    Let $w$ be an eventually non-decreasing weight and $L\subset\QB$ be a field. 
    Then, it holds that 
    \[ 
    \Nor(\Pr^n(L),h_2^w) 
    \geq \Nor(L,h^w). 
    \] 
\end{Lemma}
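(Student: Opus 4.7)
My plan is to prove the equivalent claim that every $C > \Nor(\Pr^n(L), h_2^w)$ satisfies $\Nor(L, h^w) \le C$; taking the infimum over such $C$ then yields the lemma.

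Fix such $C$, so that $B = \{[\bm{a}] \in \Pr^n(L) : h_2^w([\bm{a}]) < C\}$ is infinite. I would then apply two successive pigeonhole reductions to pass to an infinite subfamily $B' \subset B$ together with distinct indices $i, j \in \{0, \dots, n\}$ such that (i) every $[\bm{a}] \in B'$ has $a_i \ne 0$ (after which I normalize $a_i = 1$, putting all remaining coordinates in $L$), and (ii) the map $\phi \colon [\bm{a}] \mapsto a_j$ has infinite image in $L$. Next, fixing $d_0$ beyond which $w$ is non-decreasing, I would discard the $[\bm{a}] \in B'$ with $\deg(\bm{a}) < d_0$: these have bounded degree and, since $h(\bm{a}) \le h_2(\bm{a}) < C/\min_{1 \le d < d_0} w(d)$, bounded Weil height, so Theorem~\ref{North} leaves only finitely many. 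Thus I may assume $\deg(\bm{a}) \ge d_0$ throughout $B'$.

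For each $[\bm{a}] \in B'$ put $b = \phi([\bm{a}]) = a_j \in L$, so that $\deg(b) \le \deg(\bm{a})$ and $h(b) \le h(\bm{a}) \le h_2(\bm{a})$. I split $B'$ into two cases. If $\deg(b) \ge d_0$, monotonicity of $w$ on $\Z_{\ge d_0}$ gives $w(\deg(b)) \le w(\deg(\bm{a}))$, hence $h^w(b) \le w(\deg(\bm{a})) h_2(\bm{a}) = h_2^w([\bm{a}]) < C$. If $\deg(b) < d_0$, then $b$ has bounded degree over $\Q$ and, since $h(b) \le h_2(\bm{a}) < C/w(d_0)$ is uniformly bounded (using $\deg(\bm{a}) \ge d_0$ and monotonicity), Theorem~\ref{North} confines $b$ to a finite subset of $\QB$. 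Because $\phi(B')$ is infinite, the first case must contribute infinitely many distinct $b \in L$ with $h^w(b) < C$, yielding $\Nor(L, h^w) \le C$.

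The main obstacle I foresee is the mismatch in the two uses of $w$: although $\deg(b) \le \deg(\bm{a})$ always holds, one cannot directly compare $w(\deg(b))$ with $w(\deg(\bm{a}))$ when $\deg(b) < d_0$, since $w$ may behave arbitrarily before its monotone tail. The resolution is the double use of Theorem~\ref{North}, which absorbs both the small-$\deg(\bm{a})$ and the small-$\deg(b)$ exceptions into finite sets, reducing the problem to the clean regime where the monotonicity of $w$ does the work.
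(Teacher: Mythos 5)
Your proof is correct, and at its core it follows the same route as the paper, which disposes of the lemma in one line by invoking the pointwise comparison $h_2^w(\bm{a})\geq h^w(a_j/a_i)$ --- that is, exactly the facts you use: $h(a_j/a_i)\leq h(\bm{a})\leq h_2(\bm{a})$, $\deg(a_j/a_i)\leq\deg(\bm{a})$, and monotonicity of $w$. What you add is the bookkeeping needed because $w$ is only \emph{eventually} non-decreasing: the pointwise inequality can fail when $\deg(\bm{a})$ or $\deg(a_j/a_i)$ lies below the monotonicity threshold $d_0$, and your two applications of Theorem~\ref{North} (discarding the finitely many points with $\deg(\bm{a})<d_0$, and the finitely many possible values $b$ with $\deg(b)<d_0$), together with the pigeonhole choice of an index $j$ whose coordinate values form an infinite subset of $L$, make the passage from infinitely many projective points of small $h_2^w$ to infinitely many elements of $L$ of small $h^w$ fully rigorous under the stated hypothesis. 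So yours is a more careful elaboration of the paper's argument rather than a genuinely different one; the only implicit step is that after removing the finitely many low-degree points the image of $\phi$ remains infinite, which is immediate since each removed point deletes at most one value.
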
 

\begin{proof} 
    The lemma immediately follows from the fact that 
    \[ 
        h_2^w(\bm{a}) \geq h^w(a_j/a_i) 
    \] 
    holds for all $[\bm{a}]=[a_0:\cdots:a_n]\in\Pr^n(\QB)$ and integers $i,j$ in $[1,n]$. 
\end{proof}

\begin{proof}[Proof of Proposition \ref{opNorth}] 
    If $\Nor(L,\hop^w)=0$, then the assertion immediately follows from Lemma \ref{diagonalop}. 
    Assume that we have the inequality $\Nor(L,h^w)>0$.
    The inequality $\Nor(\Pr(\M_n(L))_{\geq2},\hop^w)\leq\Nor(L,h^w)$ immediately follows from Lemma \ref{diagonalop}. 
    We prove the inequality $\Nor(L,h^w)/2\leq\Nor(\Pr(\M_n(L))_{\geq2},\hop^w)$. 
    Take any constant $C\in\R_{>0}$ with $C<\Nor(L,h^w)/2$. 
    It is sufficient to show that the set $B(\Pr(\M_n(L))_{\geq2}, \hop^w, C)$ is finite. 
    Take any $[A]=[(\bm{a}_1,\ldots,\bm{a}_n)]\in B(\Pr(\M_n(L))_{\geq2},\hop^w,C)$. 
    By Lemma \ref{column vec ineq}, each vector $\bm{a}_j$ is an element of $B(L^n,h^2_\g,C)$ for all $j$. 
    Since the inequalities 
    \begin{align} 
        C
        < \Nor(L,h^w)/2 
        \leq \Nor(\Pr^{n-1}(L),h_2^w)/2 
        < \Nor(\Pr^{n-1}(L),h_2^w) 
        \label{8.1} 
    \end{align} 
    hold by Lemma \ref{vec North}, we can chose $\bm{x}_1,\ldots,\bm{x}_m\in L^n$ satisfying the following condition.  
    \begin{parts} 
        \Part{$\bullet$}
        For any $\bm{y}\in B(L^n,h_2^w,C)$, there only exists an integer $j(\bm{y})\in [1,m]$
        such that $\bm{y}$ is a non-zero scalar multiple of $\bm{x}_{j(\bm{y})}$. 
    \end{parts}
    For each $\bm{y}\in B(L^n,h_2^w,C)\setminus\{\bm{0}\}$,
    we let $\lambda(\bm{y})\in L^\times$ satisfy that $\bm{y}=\lambda(\bm{y})\bm{x}_{j(\bm{y})}$ 
    (we consider $\lambda(\bm{0})=1$). 
    We may assume $m\geq 3$ since the inequality $\rank(A)\geq2$ holds. 
    We divide the cases into whether there exists a column vector $\bm{a}_p$ that has at least two non-zero entries. 
    
    First, we assume that such a column vector $\bm{a}_p={}^t(a_{1p},\ldots,a_{np})$ exists.
    Fix non-zero entries $a_{qp}$ and $a_{rp}$ ($1\leq q<r\leq n$).
    Note that the equalities $a_{qp}=\lambda(\bm{a}_p)x_{qp}$ and $a_{rp}=\lambda(\bm{a}_p)x_{rp}$ hold,
    where $x_{qp}$ (resp. $x_{rp}$) is the $q$-th (resp. $r$-th) entry of $\bm{x}_{j(\bm{a}_p)}$. 
    We also fix any non-zero column vectors $\bm{a}_k$ and $\bm{a}_l$ satisfying that
    $1\leq k<l\leq n$ and $\bm{x}_{j(\bm{a}_k)} \neq \bm{x}_{j(\bm{a}_l)}$. 
    We note that such $k$ and $l$ exist since $\rank(A)\geq2$. 
    We remark that the inequality 
    \begin{align} 
        \deg(a_{qp}\bm{a}_k+a_{rp}\bm{a}_l)\leq\deg(A) 
        \label{8.2} 
    \end{align} 
    holds. 
    Now, we set 
    \[ 
        \bm{a}_p' 
        ={}^t(0,\ldots,0,\hspace{-0ex}\underset{\underset{\tiny{k\mbox{-th}}}{\wedge}}{a_{qp}}\hspace{-0ex}, 
        0,\ldots,0,\hspace{-0ex}\underset{\underset{\tiny{l\mbox{-th}}}{\wedge}}{a_{rp}}\hspace{-0ex},0,\ldots,0) 
        \in L^n. 
    \] 
    Then, we have the inequalities 
    \begin{align*} 
        &~ h_2^w\l(\bm{x}_{j(\bm{a}_k)}+\f{x_{rp}}{x_{qp}}\f{\lambda(\bm{a}_l)}{\lambda(\bm{a}_k)}\bm{x}_{j(\bm{a}_l)}\r) \\
        =&~ h_2^w(a_{qp}\bm{a}_k+a_{rp}\bm{a}_l) \\
        =&~ w(\deg(a_{qp}\bm{a}_k+a_{rp}\bm{a}_l))\l(h_2(A\bm{a}_p')-h_2(\bm{a}_p')+h_2(\bm{a}_p')\r) \\
        \leq&~ w(\deg(A))\l(h_2(A\bm{a}_p')-h_2(\bm{a}_p')\r)+w(\deg(A))h_2(\bm{a}_p) && \t{by \eqref{8.2}} \\
        \leq&~ w(\deg(A))\hop(A)+w(\deg(A))\hop(A) && \t{by Lemma \ref{column vec ineq}} \\
        <&~ 2C \\
        <&~ \Nor(\Pr^{n-1}(L),h_2^w) && \t{by \eqref{8.1}}. 
    \end{align*} 
    Since $\bm{x}_{j(\bm{a}_k)}$ and $\bm{x}_{j(\bm{a}_l)}$ are linearly independent,
    there are only finitely many possibilities for $\bm{x}_{j(\bm{a}_k)}+(x_{rp}/x_{qp})(\lambda(\bm{a}_l)/\lambda(\bm{a}_k))\bm{x}_{j(\bm{a}_l)}$ by Lemma \ref{vec North}. 
    Here, note that there are only finitely many choices for the value of $x_{rp}/x_{qp}$ ($\neq0$). 
    Thus, we know that there are also only finitely many choices for the value of $\lambda(\bm{a}_l)/\lambda(\bm{a}_k)$. 
    Thus, all $[A]\in B(\Pr(\M_n^2(L), \hop^w, C)$ of the case is a class of a matrix $A$ constructed by the following process. 
    \begin{parts}
        \Part{(1)} 
            Choose any integers $k \in [1,n]$ and $j_k\in [1,m]$.
            We set $\bm{a}_k=\bm{x}_{j_k}$.
        \Part{(2)} 
            Choose any integers $l\in [1,n]\setminus\{k\}$ and $j_l\in [1,m]$ such that $\bm{x}_{j_k}$ is linearly independent of $\bm{x}_{j_k}$. 
            We set $\bm{a}_l=\lambda_l\bm{x}_{j_l}$ for some $\lambda_l\in L^\times$ of finitely many choices.
        \Part{(3)} 
            For each integer $k'\in [1,n] \setminus\{k,l\}$, choose any integer $j_{k'}\in [1,m]$. 
            Set $\bm{a}_{k'}=\lambda_{k'}\bm{x}_{j_{k'}}$ for some $\lambda_{k'}\in L^\times$ of finitely many choices: 
            \[
            \xymatrix@R=2.5pt@C=8.5pt{ 
            &&& \t{\tiny{indep.}} \ar@/^5pt/@{-}[drr] \ar@/_5pt/@{-}[dll] &&& \\
            A= ( \ \cdots & \bm{a}_k \ar@/_18pt/[rrrr]_{\t{finitely many choices}} & \cdots & \bm{a}_l & \cdots & \bm{a}_{k'} & \cdots \ ) 
            } 
            \] 
            \[ 
            \xymatrix@R=2.5pt@C=8.5pt{ 
            &&& \t{\tiny{dep.}}  \ar@/^12.5pt/@{-}[ddrr] \ar@/_12.5pt/@{-}[ddll] &&& \\
            && \t{\tiny{indep.}} \ar@/^5pt/@{-}[dr] \ar@/_5pt/@{-}[dl] && \t{\tiny{indep.}} \ar@/^5pt/@{-}[dr] \ar@/_5pt/@{-}[dl] && \\
            A= ( \ \cdots & \bm{a}_k \ar@/_18pt/[rr]_{\begin{matrix}\t{\tiny{finitely many}}\\\t{\tiny{choices}}\end{matrix}} & \cdots & \bm{a}_l \ar@/_18pt/[rr]_{\begin{matrix}\t{\tiny{finitely many}}\\\t{\tiny{choices}}\end{matrix}} & \cdots & \bm{a}_{k'} & \cdots \ ) 
            } 
            \] 
    \end{parts} 
    Therefore, we conclude that there are only finitely many choices for $[A]$ of the case. 
    
    On the other hand, assume that any column vector of $A$ has at most one non-zero entry. 
    Fix any non-zero entries $a_{ij},a_{kl}\neq 0$ with $1\leq i\neq k\leq n$ and $1\leq j\neq l\leq n$. 
    We note that such non-zero entries exist since $\rank(A)\geq2$. 
    We set 
    \[ 
        \QB^n_{jl}=\l\{ {}^t(x_1,\ldots,x_n)\in\QB^n \mid x_m=0 \t{ for all } m\neq j,l \r\}. 
    \] 
    Then we have the inequalities 
    \begin{align*} 
        \Nor(L,h^w) 
        &> \f{\Nor(L,h^w)}{2} \\
        &> C \\
        &> \hop^w(A) \\
        &\geq w(\deg(A))\sup_{\bm{x}\in\QB^n_{jl}}\l\{h_2(A\bm{x})-h_2(\bm{x})\r\} \\
        &= w(\deg(A))\hop\l(\begin{pmatrix} a_{ij}&0 \\ 0&a_{kl} \end{pmatrix}\r) \\ 
        &\geq w\l(\deg\l(\begin{pmatrix} a_{ij}&0 \\ 0&a_{kl} \end{pmatrix}\r)\r) \hop\l(\begin{pmatrix} a_{ij}&0 \\ 0&a_{kl} \end{pmatrix}\r) \\ 
        &= \hop^w\l(\begin{pmatrix} a_{ij}&0 \\ 0&a_{kl} \end{pmatrix}\r) \\ 
        &= h^w(a_{ij}, a_{kl}) \t{\hspace{8em} by Lemma }\ref{diagonalop}. 
    \end{align*} 
    Thus, there are only finitely many choices for the value of $a_{ij}/a_{kl}\in L$. 
    Therefore, we also conclude that there are only finitely many choices for $[A]$ of the case. 
    
    These complete the proof. 
\end{proof} 

\begin{Remark} 
    The proof above is greatly based on the proof of \cite[Theorem 4.3 (a)]{Tal}. 
\end{Remark}

\begin{acknowledgement} 
    The authors would like to express their gratitude to Professor Fabien Pazuki for detecting certain notational inaccuracies and bringing reference \cite{Hul} to their attention. 
    The first author was supported until March 2022 by JST SPRING, Grant Number JPMJSP2136.
    The second author is supported by JSPS KAKENHI Grant Number JP20K14300.
\end{acknowledgement}


\begin{thebibliography}{10000000} 
    \bibitem[ADZ14]{ASZ} 
        F. Amoroso, S. David, U. Zannier, 
        \textit{On fields with Property }(B), 
        Proc. Amer. Math. Soc. {\bf 142} (2014), no. 6, 1893--1910. 
    
    \bibitem[AZ00]{AZ} 
        F. Amoroso, U. Zannier, 
        \textit{A relative Dobrowolski lower bound over abelian extensions}, 
        Ann. Scuola Norm. Sup. Pisa Cl. Sci. (4) {\bf 29} (2000), no. 3, 711--727.
    
    \bibitem[BG06]{BG} 
        E. Bombieri, W. Gubler, 
        \textit{Heights in Diophantine Geometry}, 
        New Mathematical Monographs, 4. Cambridge University Press, Cambridge, 2006. 
    
    \bibitem[BZ01]{BZ} 
        E. Bombieri, U. Zannier, 
        \textit{A note on heights in certain infinite extensions of $\Q$}, 
        Atti Accad. Naz. Lincei Cl. Sci. Fis. Mat. Natur. Rend. Lincei (9) Mat. Appl. {\bf 12} (2001), 5--14. 
    
    \bibitem[Dob79]{Dob} 
        E. Dobrowolski, 
        \textit{On a question of Lehmer and the number of irreducible factors of a polynomial}, 
        Acta Arith. {\bf 34} (1979), no. 4, 391--401.
    
    \bibitem[DZ08]{DZ} 
        R. Dvornicich, U. Zannier, 
        \textit{On the properties of Northcott and Narkiewicz for fields of algebraic numbers}, 
        Funct. Approx. Comment. Math. {\bf 39} (2008), part 1, 163--173. 
    
    \bibitem[HS00]{HS} 
        M. Hindry, J. H. Silverman, 
        {\it Diophantine Geometry. An Introduction}, 
        Graduate Texts in Mathematics, 201. Springer-Verlag, New York, 2000. 
        
    \bibitem[Hul23]{Hul} 
        N. Hultberg, 
        {\it Fields with few small points}, 
        preprint, 
        arXiv: \href{https://arxiv.org/abs/2307.00297}{arXiv:2307.00297} 
    
    \bibitem[Neu99]{Neu} 
        J. Neukirch, 
        \textit{Algebraic Number Theory}, 
        Springer-Verlag, Berlin, 1999. 
    
    \bibitem[Oka19-1]{Oka1} 
        M. Okazaki, 
        \textit{A Bogomolov type property relative to a normalized height on $M_n(\QB)$}, 
        Note Mat. {\bf 39} (2019), no. 1, 59--63. 
    
    \bibitem[Oka19-2]{Oka2} 
        M. Okazaki, 
        \textit{Height one matrices}, 
        Nihonkai Math. J. {\bf 30} (2019), no. 1, 19--26. 
    
    \bibitem[Oka22]{Oka3} 
        M. Okazaki, 
        {\it Relative Northcott numbers for the weighted Weil heights}, 
        preprint, 
        arXiv: \href{https://arxiv.org/abs/2206.05440}{arXiv:2206.05440} 
    
    \bibitem[OS22]{OS} 
        M. Okazaki, K. Sano, 
        {\it Northcott numbers for the weighted Weil heights}, 
        to appear in Atti Accad. Naz. Lincei Rend. Lincei Mat. Appl., 
        arXiv: \href{https://arxiv.org/abs/2204.04446}{arXiv:2204.04446} 
    
    \bibitem[PTW22]{PTW} 
        F. Pazuki, N. Technau, M. Widmer, 
        {\it Northcott numbers for the house and the Weil height}, 
        Bull. Lond. Math. Soc. {\bf 54} (2022), no. 5, 1873--1897. 
    
    \bibitem[Sch73]{Sch} 
        A. Schinzel, 
        \textit{On the product of the conjugates outside the unit circle of an algebraic number}, 
        Acta Arith. {\bf 24} (1973), 385--399. 
    
    \bibitem[Sil84]{Sil1}
        J. H. Silverman, 
        \textit{Lower bounds for height functions}, 
        Duke Math. J. {\bf 51} (1984), no. 2, 395--403. 
    
    \bibitem[Sil07]{Sil2} 
        J. H. Silverman, 
        \textit{The Arithmetic of Dynamical Systems}, 
        Graduate Texts in Mathematics, 241. Springer, New York, 2007. 
    
    \bibitem[Tal00]{Tal} 
        V. Talamanca, 
        \textit{A Gelfand-Beurling type formula for heights on endomorphism rings}, 
        J. Number Theory \textbf{83} (2000), no. 1, 91--105. 
    
    \bibitem[VV16]{VV} 
        X. Vidaux, C. R. Videla, 
        \textit{A note on the Northcott property and undecidability}, 
        Bull. Lond. Math. Soc. {\bf 48} (2016), no. 1, 58--62. 
    
    \bibitem[Wid11]{Wid} 
        M. Widmer, 
        \textit{On certain infinite extensions of the rationals with Northcott property}, 
        Monatsh. Math. {\bf 162} (2011), no. 3, 341--353. 
    
\end{thebibliography}
\end{document}